\title{Relative entropy for hyperbolic-parabolic systems and application to the constitutive theory of thermoviscoelasticity }
\author{
Cleopatra Christoforou
\thanks{Department of Mathematics and Statistics,
 University of Cyprus, Nicosia 1678, Cyprus.  Email:  {\tt christoforou.cleopatra@ucy.ac.cy}},
\and 
Athanasios E. Tzavaras
\thanks{Computer, Electrical, Mathematical Sciences \& Engineering Division, King Abdullah University of Science and Technology (KAUST), Thuwal, Saudi Arabia.
Email: {\tt athanasios.tzavaras@kaust.edu.sa} }
\thanks{Institute of Applied and Computational Mathematics,
FORTH, Heraklion, Greece.}
\date{}
}
\renewcommand{\footnote}{\endnote}
\def\del{\partial}
\font\msym=msbm10
\def\Real{{\mathop{\hbox{\msym \char '122}}}}
\def\R{\Real}
\def\charf {\mbox{{\text 1}\kern-.24em {\text l}}}
\def\Z{\mathbb Z}
\def\cF{\mathcal F}
\def\cM{\mathcal M}
\def\cP{\mathcal P}
\def\cS{\mathcal S}
\def\Prob{{\rm Prob}}
\def\Fdot{\dot F}
\def\bU{\bar U}
\def\bF{\bar F}
\def\bFdot{ \dot {\bar F}}
\def\bQ{\bar Q}
\def\bZ{\bar Z}
\def\bSigma{\bar \Sigma}
\def\bu{\bar u}
\def\bv{\bar v}
\def\btheta{\bar \theta}
\def\bsigma{\bar \sigma}
\def\be{\bar e}
\def\bareta{\bar \eta}
\def\bkappa{\bar \kappa}
\def\bmu{\bar \mu}
\def\br{\bar r}
\def\barf{\bar f}
\def\heta{\hat \eta}
\def\hq{\hat q}
\def\Heta{\hat H}
\def\bonu{\boldsymbol{\nu}}
\def\bomu{\boldsymbol{\mu}}
\newcommand{\ignore}[1]{}
\newtheorem{lemma}{Lemma}[section]
\newtheorem{theorem}[lemma]{Theorem}
\newtheorem{corollary}[lemma]{Corollary}
\newtheorem{definition}[lemma]{Definition}
\newtheorem{remark}[lemma]{Remark}
\renewcommand{\del}{\partial}
\newcommand{\eps}{\varepsilon}
\newcommand{\To}{{\mathbb{T}}}
\newcommand{\barQT}{{\overline Q}_T}
\font\msym=msbm10
\def\Real{{\mathop{\hbox{\msym \char '122}}}}
\def\torus{{{\text{\rm T}} \kern-.42em {\text{\rm T}}}}
\def\T3{\torus^3}
\def\div{\hbox{div}\,}
\begin{document}
\maketitle
\baselineskip=18pt

\numberwithin{equation}{section}

\begin{abstract}
\noindent
We extend the relative entropy identity  to the class of hyperbolic-parabolic systems  whose hyperbolic part is symmetrizable.
 The resulting identity, in the general theory, is useful to provide stability of viscous solutions and yields a convergence result in the zero-viscosity limit to smooth solutions in an $L^p$
framework. Also it provides measure valued weak versus strong uniqueness
theorems for the hyperbolic problem. The relative entropy identity is also developed for the system of gas dynamics for viscous and heat conducting gases,
and for the system of thermoviscoelasticity with viscosity and heat-conduction. Existing differences in applying the relative entropy method between 
 the general hyperbolic-parabolic theory and the examples are underlined.
\end{abstract}
%
%
%
\tableofcontents

\bigskip
\bigskip

\section{Introduction}
\label{intro}

Consider a system of hyperbolic-parabolic conservation laws
\begin{equation}
\label{intro-hyppar}
\begin{aligned}
\del_t A(u) + \del_\alpha F_\alpha (u) = \eps \del_\alpha ( B_{\alpha \beta} (u) \del_\beta u)\;,
\end{aligned}
\end{equation}
where $u(t,x)$ takes values in $\R^n$, $t \in \R^+$, $x \in \R^d$ and $A , F_\alpha : \R^n \to \R^n $ , $B_{\alpha \beta} : \R^n \to \R^{n \times n}$ are given smooth functions with
$\alpha, \beta = 1, ..., d$. It is assumed that the associated hyperbolic problem
\begin{equation}
\label{intro-hypcl}
\begin{aligned}
\del_t A(u) + \del_\alpha F_\alpha (u) = 0
\end{aligned}
\end{equation}
 is symmetrizable in the sense of  Friedrichs and Lax \cite{FL71}.  The goal of this work is to extend the class of computations that go under the 
 general term \emph{relative entropy}   to the broader class of systems \eqref{intro-hypcl} and \eqref{intro-hyppar}. The main reason for pursuing this goal is that the problems that arise in applications are governed by systems in the form~\eqref{intro-hyppar} and it is important to understand whether and how the \emph{relative entropy method} can be employed in the general setting of~\eqref{intro-hyppar}.
 
 The idea of relative entropy, introduced by Dafermos \cite{dafermos79,dafermos79b} and DiPerna \cite{diperna79}, 
 is quite powerful in comparing solutions of conservation laws  ({\it e.g.}~\cite{diperna79,bds11,dst12,SV14}),
 or balance laws ({\it e.g.}~\cite{tzavaras05,MT14}),
 and has recently being applied to problems that are classified under the domain of hyperbolic-parabolic systems 
 ({\it e.g}~\cite{fn12,lt06,lt13,KV15}).
 The objective of this work is to systematize the derivation of relative entropy identities, referring to \eqref{intro-hyppar} as a unifying framework,
in order to connect the relative entropy theory with its natural framework, the $L^2$ theory of hyperbolic-parabolic systems of 
 Kawashima \cite{Kawashima84} and the developments on Green functions by Liu-Zeng \cite{LZ97},  and, even further, to exhibit  the intimate connection
 of this method with the framework of thermodynamics. We emphasize that the latter appears from the very early developments of
 the method  \cite{dafermos79,dafermos79b,iesan94}, but perhaps because it is cumbersome, it has not been always transparent in subsequent  developments 
 of the theory. We hope that our work by attempting to translate the thermodynamical structure to a partial differential equation framework will
 help strengthen this connection. In the second part of this work, we revisit the connection of relative entropy  to thermodynamics 
 in the context of the equations of gas dynamics
 with Newtonian viscosity and Fourier heat conduction, as well as in the context of the general constitutive theory of thermoviscoelasticity, whose thermodynamical structure 
 is specified in \cite{cn63,cm64, TN}. We point out crucial differences in these examples from the general framework developed in the first part and despite of that, we show that the resulting identities and their implications verify the connection of the method with thermodynamics.

 The class of systems~\eqref{intro-hyppar} and~\eqref{intro-hypcl} to which the \emph{relative entropy method} is extended here is characterized by the following hypotheses:
 \begin{enumerate}
 \item[($\text{H}_1$)] \emph{ $A:\R^n\to\R^n$ is a $C^2$ globally invertible map,}
 \item[($\text{H}_2$)] \emph{  existence of an entropy-entropy flux pair $(\eta,q)$, that is $\exists$ $G:\R^n\to\R^n$, $G=G(u)$ smooth such that
$$
\begin{aligned}
\nabla \eta &= G \cdot \nabla A
\\
\nabla q_\alpha &= G \cdot \nabla F_\alpha \, , \quad  \alpha = 1, ..., d \, ,
\end{aligned}
$$
}
 \item[($\text{H}_3$)] \emph{ the symmetric matrix $\nabla^2 \eta (u) - G(u) \cdot  \nabla^2 A (u)$ is positive definite,}
\listpart{and}
 \item[($\text{H}_4$)]  \emph{ the matrices $\nabla G(u)^T B_{\alpha \beta} (u)$ induce entropy dissipation to \eqref{intro-hyppar},  namely
 $$\sum_{\alpha,\beta} \xi_\alpha\cdot \big(\nabla G(u)^T B_{\alpha \beta} (u)\xi_\beta\big)\ge 0\qquad\forall\xi_\alpha,\xi_\beta\in\R^n,$$
 }
 \item[($\text{H}_5$)]  \emph{ or a different dissipative structure to \eqref{intro-hyppar} that is  $\exists$ $\mu>0$ such that
 $$  \sum_{\alpha,\beta}\nabla G(u)\del_\alpha u\cdot B_{\alpha\beta}(u)\del_\beta u\ge \mu\sum_\alpha|B_{\alpha\beta}(u)\del_\beta u|^2  .
 $$
 }
 \end{enumerate}
 Hypothesis ($\text{H}_1$)-($\text{H}_3$) are equivalent to the usual symmetrizability hypothesis in the sense of Friedrichs and Lax and therefore, they render system~\eqref{intro-hypcl} hyperbolic. The additional hypothesis ($\text{H}_4$) for the hyperbolic-parabolic systems~\eqref{intro-hyppar} guarantees that the entropy dissipates along the evolution. On the other hand, the other condition~\eqref{DH} induces dissipativity of different type, that allows degenerate viscosity matrices.

In this article, we extend the relative entropy method to this broader class of systems. More precisely, we derive the relative entropy identities for systems~\eqref{intro-hyppar} and~\eqref{intro-hypcl} and exploit these identities to derive significant properties of solutions such as uniqueness, stability and convergence. These are the identities~~\eqref{eqnrelen} and \eqref{relenidenhyp}, respectively,  in Section~\ref{sec-relen}.  It should be noted that theses hypotheses are presented in such a way that the reader can view their significance and utility in the framework of the relative entropy.

The article is divided into two parts. The first part is devoted in the development of the general theory of the relative entropy method mainly for the hyperbolic-parabolic systems~\eqref{intro-hyppar}. The second part is focused on two examples from thermodynamics that are presented in comparison to the general theory.
 
 The outline of the first part is the following: In Section \ref{secprel} we list the main hypotheses that make the relative entropy a workable quantity,
 and establish their connections to the theory  of symmetrizable systems \cite{FL71} and to the $L^2$ theory of hyperbolic-parabolic systems  \cite{Kawashima84}. 
 The relative entropy is defined via
 \begin{equation}
\eta ( u | \bu ) = \eta (u) - \eta (\bu) - \nabla \eta (\bu)  \cdot \nabla A (\bu)^{-1} (A(u) - A(\bu))
\end{equation}
and leads to the relative entropy identity \eqref{relenidenhyp} for system \eqref{intro-hypcl} in Section~\ref{secrelen}. The derivation is a straightforward extension of the classical works of Dafermos \cite{dafermos79,dafermos79b} and DiPerna~\cite{diperna79} when $A(u)=u$. However, the present calculation is useful for the reader to be on record and also, to what it comes next in the setting of the hyperbolic-parabolic systems~\eqref{intro-hyppar}.

The main calculation is performed in Section~\ref{secrelenhyppar} for the hyperbolic-parabolic systems~\eqref{intro-hyppar} that yields the associated relative entropy identity~\eqref{eqnrelen}. It is important that the terms in the identity are collected in a proper fashion that allows us to control them and prove theorems. Indeed, results are captured in the two following directions: In Theorem \ref{thmstability} of Section~\ref{sec-stab} we use the relative entropy to establish stability among bounded smooth solutions
 of hyperbolic-parabolic systems. Notably, a generalised version of the usual dissipative structure for hyperbolic-parabolic systems suffices to control the various error terms that appear. This is actually the role of Hypothesis~\eqref{hyp1}. Perhaps more important, in Theorem \ref{thmconv} of Section~\ref{sec-conv}, we prove a general convergence result in the zero-viscosity limit from 
 viscous system \eqref{intro-hyppar} to a smooth solution of inviscid system \eqref{intro-hypcl}, which is valid under fairly general a-priori bounds and even
 for degenerate viscosity matrices. This is accomplished by replacing~\eqref{hyp1} by hypothesis ~\eqref{DH}, which induces dissipativity of different nature and this was motivated and introduced by Dafermos in~\cite[Chapter IV]{daf10}. The significance of Theorem \ref{thmconv} is that the convergence obtained uses as ``metric" for measuring distance the relative entropy function. While this is not a metric, it operates as a combination of norms and even provides an $O(\eps)$ rate of convergence. There are alternative well developed techniques
for obtaining convergence results for the zero-viscosity limit to smooth solutions and in stronger norms - see for instance \cite[Ch V]{Kawashima84} - but the proof that we obtain is striking in its simplicity and generality.

Next, in Section~\ref{S2.2.2}, we consider dissipative measure valued solutions to hyperbolic problems for systems of conservation laws and balance laws respectively under appropriate growth conditions on constitutive functions. More precisely, we use the relative entropy identity to prove in Theorem \ref{thmweakstrong} a strong conservative solution of \eqref{intro-hypcl} is unique in the class of dissipative measure-valued solutions. Analogous theorems have been proved
 in \cite{bds11,dst12} and \cite{fnkt}; the present result is a technical extension of these works, and presents a comprehensive result
in the $L^p$ framework of approximate solutions to hyperbolic systems \eqref{intro-hypcl}. An interesting feature of the analysis is how concentration measures are
defined for a symmetrizable hyperbolic system \eqref{intro-hypcl} and the associated form of the averaged relative entropy identity. 
In Section~\ref{secrelenbal}, we study systems of balance laws and investigate the role of the source terms in the derivation 
of the relative entropy identity and in the proof  of the weak-strong uniqueness result. We remark that the proofs in Section~\ref{S2.2.2} are complemented by useful estimates established in Appendix~\ref{s-A}. These estimates allow us to control terms with respect to the relative entropy $\eta(u|\bu)$ and also, to view the relative entropy as a ``metric" measuring the distance between $u$ and $\bu$. This is actually expressed as a combination of $L^2$ and $L^p$ norms.

 The aforementioned results are quite general in nature, but require for their application a full-dissipative structure arising either from a positive definite viscosity matrix
 (see hypothesis \eqref{hyp1strong}) or the weaker condition~\eqref{DH} depending on the issue pursued. 
 It is well known that in most applications only
 a partial dissipative structure is available, and it typically originates from nonnegative but singular viscosity matrices. The second part of the article consists of the last two sections, in which we undertake this issue
in the context of specific applications. We present two interesting examples to observe how the general theory of Sections \ref{sec-relen} and~\ref{S2.2.2} applies and indicate the links to the associated thermodynamical structure. In Section \ref{sec-vhcg}, we take up the system of one dimensional gas-dynamics for viscous, heat conducting gases and derive the relative entropy formula for this system. This work is a special case of the next example, and for this reason, these calculations are helpful to the reader and serve as guidance for the following section.
In fact, in Section \ref{sec-thermov} we take up the system of thermoviscoelasticity in several space dimensions under its constitutive theory. We derive the relative entropy identity that is pertinent to this theory and describe 
how the general theory for hyperbolic-parabolic systems takes particular shape when applied to the constitutive theory of thermoviscoelasticity.
Related formulas in more special situations have been  computed  in \cite{fn12, feir15} for gases with Stokes viscosity and Fourier heat conduction
and in \cite{iesan94} for the constitutive theory of thermoelasticity. It should be indicated that for the systems studied in Sections  \ref{sec-vhcg} and \ref{sec-thermov},
the convexity of the entropy in the conserved variables  translates into the usual thermodynamic stability conditions
$\psi_{F F}(F,\theta) > 0$ and $\eta_\theta (F, \theta) > 0$ familiar from the work of Gibbs for a theory with thermal and elastic effects. Last, we establish analogous theorems to those of Sections~\ref{sec-relen} and~\ref{S2.2.2} in the context of thermoviscoelasticity. In particular, in Theorem~\ref{thmconvnonconductors}, we prove the convergence of weak solutions of the system of thermoviscoelasticity to the smooth solution of the system of thermoelastic nonconductors of heat as the parameters $\mu$, $k$ tend to zero. Then, 
in Theorem~\ref{thermoelweakstrong}, we establish uniqueness of strong solutions within the class of entropic-measure valued solutions to the system of adiabatic thermoelasticity.

Two of the major differences that arise in the relative entropy method between the examples and the general theory are: (a) Hypothesis~\eqref{DH} that is assumed to prove the convergence of the zero-viscosity limit for general hyperbolic-parabolic systems~\eqref{intro-hyppar} in Section~\ref{sec-conv} does not apply to the system of thermodynamics in Section~\ref{S4.conv}. One can confirm this by computing condition~\eqref{DH} in the case of the example. Perhaps a variant might apply, but in order to get the elegant condition~\eqref{m-klimit} imposed on the parameters $\mu$ and $k$ of Theorem~\ref{thmconvnonconductors}, one needs to work out the special case. (b) The role of concentration measure is different in the setting of thermodynamics from the workings in the general mv-weak versus strong uniqueness result in Section~\ref{S2.2.2}. As a matter of fact,
in the definition of entropic measure-valued solutions for the system of adiabatic thermoelasticity, in contrast to the theory of dissipative mv-solutions for general systems of conservation laws, a concentration measure appears in the energy conservation law  rather than in the Clausius-Duhem inequality describing the entropy production. The reason is that the estimates in the example are generated by the energy identity and not by the entropy inequality, which is typical in the example as contrasted to the general theory. This issue is pursued in Section~\ref{S4.uni}.

%
\section{Relative entropy for systems of hyperbolic parabolic conservation laws}
\label{sec-relen}

We consider the system of partial differential equations
\begin{equation}
\label{hyppar}
\begin{aligned}
\del_t A(u) + \del_\alpha F_\alpha (u) = \eps \del_\alpha ( B_{\alpha \beta} (u) \del_\beta u)
\end{aligned}
\end{equation}
where $u = u (t, x) : \R^+ \times \R^d \to \R^n$ and $n$, $d$ are integers representing  the number of the conserved quantities and 
the space dimension. The functions $A , F_\alpha : \R^n \to \R^n $ , $B_{\alpha \beta} : \R^n \to \R^{n \times n}$ are smooth,
$\alpha, \beta = 1, ..., d$ and system \eqref{hyppar} belongs to the general class of hyperbolic-parabolic systems. 

The summation convention over repeated indices is employed throughout this article and some computations may appear in extended coordinates for clarification.

The objective is to develop a relative entropy identity for hyperbolic-parabolic systems \eqref{hyppar}.
Hypotheses on the constitutive functions and the viscosity matrices will be placed and
guided by the goal of rendering this identity useful and applying it to some standard questions of stability, convergence and uniqueness. Also, comparisons are pursued with the Lax-Friedrichs theory of symmetrizable systems  and the Kawashima $L^2$-theory for hyperbolic-parabolic systems.
In later sections, specific systems from thermomechanics are reviewed in connection to these
general hypotheses.

\subsection{Hypotheses}
\label{secprel}

\subsubsection{Relative entropy for a hyperbolic system}
\label{sechyphyp}
Consider first the constituent system of conservation laws
\begin{equation}
\label{hypcl}
\begin{aligned}
\del_t A(u) + \del_\alpha F_\alpha (u) = 0 \, .
\end{aligned}
\end{equation}
It is assumed that $A : \R^n \to \R^n$ is {\it a $C^2$ map which is one-to-one} and satisfies
\begin{equation}
\label{hypns}
\begin{aligned}
&\quad \qquad \qquad  \nabla A (u) \quad \mbox{is nonsingular} \quad \forall u \in \R^n \, .
\end{aligned}
\tag{H$_1$}
\end{equation}
By the inverse function theorem the map $v=A(u)$ is locally invertible with the inverse map $u = A^{-1} (v)$
a $C^2$ map. By assumption \eqref{hypns} the map $v=A(u)$ is globally invertible and the set theoretic inverse coincides
and inherits the smoothness of the inverse induced by the inverse function theorem.

The system \eqref{hypcl} is endowed with an additional conservation law
\begin{equation}
\label{addcl}
\begin{aligned}
\del_t \eta (u) + \del_\alpha q_\alpha (u) = 0 \, .
\end{aligned}
\end{equation}
This structural hypothesis is rendered precise as follows:
The functions $\eta$-$q $, $q = (q_\alpha )$,  $\alpha = 1, ... , d$,  are called an entropy pair ($\eta$ is called entropy and 
$q = (q_\alpha )$, the associated entropy-flux)
 if there exists a smooth function
$G : \R^n \to \R^n$, $G = G(u)$,  such that simultaneously
\begin{equation}
\label{hypep}
\begin{aligned}
\nabla \eta &= G \cdot \nabla A
\\
\nabla q_\alpha &= G \cdot \nabla F_\alpha \, , \quad  \alpha = 1, ..., d \, .
\end{aligned}
\tag{H$_2$}
\end{equation}
If \eqref{hypep} is satisfied then smooth solutions of \eqref{hypcl} satisfy the additional identity \eqref{addcl}.
One checks that \eqref{hypep} is equivalent to requiring that $G$ satisfies the simultaneous equations
\begin{align}
\nabla G^T \,  \nabla A &= \nabla A^T \, \nabla G
\label{compat1}
\\
\nabla G^T \,  \nabla F_\alpha &= {\nabla F_\alpha}^T \, \nabla G  \, , \quad \alpha = 1, ... , d \, .
\label{compat2}
\end{align}
That is, if there exists a multiplier $G(u)$ satisfying \eqref{hypep} (equivalently \eqref{compat1}, \eqref{compat2})
then system \eqref{hypcl} is endowed with the additional conservation law \eqref{addcl}. It is clear that the system \eqref{compat1}, \eqref{compat2} is in general overdetermined; 
nevertheless, systems from mechanics naturaly inherit the entropy pair structure  from the second law of thermodynamics.

Given two solutions $u$, $\bu$ of \eqref{hypcl}, the relative entropy  is defined via 
\begin{equation}
\label{defrelen}
\begin{aligned}
\eta ( u | \bu ) &= \eta (u) - \eta (\bu) - G(\bu) \cdot (A(u) - A(\bu))
\\
&= \eta (u) - \eta (\bu) - \nabla \eta (\bu)  \cdot \nabla A (\bu)^{-1} (A(u) - A(\bu))\;,
\end{aligned}
\end{equation}
while the relative flux(es) by
\begin{equation}
\label{defrelfl}
\begin{aligned}
q_\alpha ( u | \bu ) &= q_\alpha (u) - q_\alpha (\bu) - G(\bu) \cdot (F_\alpha (u) - F_\alpha (\bu))
\\
&= q_\alpha (u) - q_\alpha (\bu) - \nabla \eta (\bu)  \cdot \nabla A (\bu)^{-1} (F_\alpha (u) - F_\alpha (\bu)).
\end{aligned}
\end{equation}

The formula \eqref{defrelen} will be used to estimate the distance between two solutions $u$ and $\bu$.
To make it amenable to analysis,  we note that $\nabla^2 \eta (u) - G(u) \cdot  \nabla^2 A (u)$ is symmetric
and require that it is positive definite, that is
\begin{equation}
\label{hyppd}
\xi \cdot \left ( \nabla^2 \eta (u) - G(u) \cdot  \nabla^2 A (u)  \right ) \xi > 0  \quad \mbox{for $\xi \in \R^n \setminus \{ 0 \} $}\, .
\tag{H$_3$}
\end{equation}
Here, we clarify that 
$$G(u) \cdot  \nabla A (u) :=\sum_{k=1}^nG_k(u)\nabla A_k(u)\in\R^n,\qquad G(u) \cdot  \nabla^2 A (u) :=\displaystyle\sum_{k=1}^n G_k(u)  \nabla^2 A_k (u) \in\R^{n\times n}$$ 
in~\eqref{hypep} and \eqref{hyppd}, respectively. We also add that the expression $\xi\cdot M\xi$, when $\xi\in\R^n$ and $M\in\R^{n\times n}$ is the dot product of the vectors $\xi$ and $M\xi$ and these notations are used repeatedly in this article.

Under \eqref{hyppd}, expression \eqref{defrelen} acquires some characteristics of a distance function
(without however being a metric) which render it useful
for comparing the distance between two solutions $u(t,x)$ and $\bu(t,x)$.
The definition of relative entropy and flux(es) given by \eqref{defrelen}--\eqref{defrelfl} extends to the case of system \eqref{hypcl} a well known definition pursued 
in  \cite{dafermos79,diperna79} for the case $A(u) =u$ with the same  objective of calculating the distance between two solutions.
A precursor to quantity \eqref{defrelen} appears in \cite{Kawashima84} for comparing a general solution $u(t,x)$ to a constant state $\bu$,
in connection to asymptotic behavior problems.

\subsubsection{Hypotheses for the hyperbolic-parabolic system}
\label{sechyphyppar}

Next we return to system \eqref{hyppar}, imposing hypotheses \eqref{hypns}, \eqref{hypep},
\eqref{hyppd} on the hyperbolic part, and examine the assumptions on the
viscosity matrices from the perspective of the development of a relative entropy identity.
Using the multiplier $G(u)$ in \eqref{hypep}, we deduce that 
smooth solutions of  \eqref{hyppar} satisfy the identity 
\begin{equation}
\label{hypparaen}
\begin{aligned}
\del_t  \eta (u) + \del_\alpha q_\alpha (u) = \eps \del_\alpha ( G(u) \cdot B_{\alpha \beta} (u) \del_\beta u) 
- \eps \nabla G(u) \del_{\alpha} u \cdot B_{\alpha \beta} (u) \del_{\beta} u \, .
\end{aligned}
\end{equation}
We will require that \eqref{hypparaen} induces a dissipative structure, namely that the following positive semi-definite structure holds true:
\begin{equation}
\label{hyp1}
\begin{aligned}
\sum_{\alpha,\beta=1}^d\xi_\alpha \cdot \left(\nabla G(u)^T B_{\alpha \beta} (u) \, \xi_\beta\right) =\sum_{\alpha,\beta=1}^d\sum_{i,j=1}^n\xi_\alpha^i D_{\alpha\beta}^{ij} \xi_\beta^j  &\ge 0  \qquad \qquad  \forall   \xi_\alpha,\,\xi_\beta \in \R^n,
\end{aligned}
\tag {H$_4$}
\end{equation}
where $D_{\alpha\beta}\doteq \nabla G(u)^T B_{\alpha \beta} (u)$, for $ \alpha, \beta = 1, ..., d$ and note that~\eqref{hyp1} is rewritten in extended coordinates for the convenience of the reader and the computations in Section~\ref{sec-stab}. In view of the identity~\eqref{hypparaen}, this guarantees that the entropy dissipates along the evolution. 

Hypothesis \eqref{hyp1} is natural in the context of applications to mechanics as it is connected to entropy
dissipation and the Clausius-Duhem inequality. The  analysis of Section \ref{secrelenhyppar} will detail
its relevance to the relative entropy calculation. When exploiting the identities in the abstract context of
\eqref{hyppar}, we often impose a strengthened version of \eqref{hyp1}, that is the strict positive definite case
\begin{equation}
\label{hyp1strong}
\begin{aligned}
\sum_{\alpha,\beta=1}^d\xi_\alpha \cdot \left(\nabla G(u)^T B_{\alpha \beta} (u) \, \xi_\beta\right)  &> 0  \qquad \qquad  \forall   \xi_\alpha,\,\,\xi_\beta \in \R^n\setminus\{0\}.
\end{aligned}
\tag {H$_{4s}$}
\end{equation}
The minimum and maximum eigenvalues of the associated quadratic form, for $u \in \R^n$,
\begin{equation}
\label{defnun}
\begin{aligned}
\nu (u) &= \inf\Big\{\sum_{\alpha,\beta} \xi_\alpha \cdot \left(\nabla G(u)^T B_{\alpha \beta} (u) \, \xi_\beta\right)\, \Big|\,\xi_\alpha,\xi_\beta \in \R^n , \,|\xi_\alpha| =|\xi_\beta|= 1  \Big\} \;  > 0 \,,  \quad
\\
N(u) &= \sup\Big\{ \sum_{\alpha,\beta}  \xi_\alpha \cdot \left(\nabla G(u)^T B_{\alpha \beta} (u) \, \xi_\beta\right)\, \Big|\,\xi_\alpha,\xi_\beta \in \R^n ,\, |\xi_\alpha| =|\xi_\beta|= 1 \Big\} \;   < \infty \, , 
\end{aligned}
\end{equation}
may be used to express \eqref{hyp1strong} in an equivalent (more quantitative) format:
\begin{equation}
\label{hyp1sp}
0  < \nu(u) \sum_\alpha|\xi_\alpha|^2 \le \sum_{\alpha,\beta}\xi_\alpha \cdot\left( \nabla G(u)^T B_{\alpha \beta} (u) \, \xi_\beta\right) \le N(u)\sum_\alpha |\xi_\alpha|^2   \quad \forall   \xi_\alpha,\xi_\beta \in \R^n\setminus\{0\}.
\tag {H$_{4p}$}
\end{equation}

This hypothesis requires that the viscosity matrices are non-degenerate and it will be assumed for establishing stability of viscous solutions in Section~\ref{sec-stab}. Another hypothesis of different nature is set in Section~\ref{sec-conv} that allows degenerate viscosity matrices  to be considered in the zero-viscosity limit and that one, called~\eqref{DH}, will replace~\eqref{hyp1}. Its presentation is postponed for Section~\ref{sec-conv}.

\subsubsection{ A convex entropy for an equivalent system}
\label{sechypconv}

Here, we compare hypotheses \eqref{hypep} and \eqref{hyppd} with the familiar notion of symmetrizable 
first-order systems of Friedrichs and Lax \cite{FL71}. 
Hypothesis \eqref{hypns} is a standing assumption that guarantees the transformation 
 $v = A(u)$ is invertible, and the system \eqref{hypcl} and \eqref{addcl} can be expressed  in terms of the conserved variables $v$,
\begin{align}
\label{sys2}
\del_t v + \del_\alpha (F_\alpha \circ A^{-1}) (v) &= 0 
\\
\del_t (\eta \circ  A^{-1}) (v)  + \del_\alpha ( q_\alpha \circ  A^{-1}) (v)   &= 0\;.
\label{ensys2}
\end{align}
Setting
\begin{equation}
\label{formdef}
f_\alpha (v) = F_\alpha \circ A^{-1} (v) \, , \quad H(v) = \eta \circ  A^{-1} (v) \, , \quad
Q_\alpha (v) = q_\alpha \circ  A^{-1} (v) \, .
\end{equation}
we obtain the formulas
\begin{equation}
\label{formdefin}
\eta (u) = H(A(u) ) \, , \quad q_\alpha(u) = Q_\alpha (A(u) ) \, .
\end{equation}
They readily yield
\begin{equation}
\label{appform1}
\begin{aligned}
\nabla_u \eta (u) &= (\nabla_v H ) (A(u)) \cdot \nabla_u A(u) \;,
\\
\nabla_u q_\alpha (u) &= (\nabla_v Q_\alpha ) (A(u)) \cdot \nabla_u A(u)
\end{aligned}
\end{equation}
and
\begin{equation}
\label{appform2}
\nabla^2_u \eta (u) =  (\nabla_v^2 H)(A(u)) : ( \nabla_u A(u) , \nabla_u A(u) ) + (\nabla_v H ) (A(u)) \cdot \nabla^2_u A(u) \;.
\end{equation}

Suppose that  $\eta - q $  is an entropy pair for \eqref{hypcl} satisfying \eqref{hypep}. 
If the flux $f_\alpha$ and the pair $H - Q$  are defined by  \eqref{formdef} then 
\eqref{appform1} implies
\begin{align}
\label{appform3}
G(u) =  ( \nabla_v H ) (A(u))  
\\
\label{entropyv}
\nabla_v Q_\alpha (v) = \nabla_v H (v) \cdot \nabla_v f _\alpha (v) \, ,
\tag{h$_2$}
\end{align}
i.e. the pair $H - Q$ is an entropy pair for \eqref{sys2} with the multiplier
$G(u)$ is defined via \eqref{appform3}.  By \eqref{appform2}, Hypothesis \eqref{hyppd} translates 
to the requirement  that the entropy  $H(v)$ is convex,
\begin{equation}
\label{hypvucprime}
\zeta \cdot \nabla_v^2 H (v)  \zeta > 0  \quad \mbox{for $\zeta \in \R^n, \; \zeta \ne 0$}  \, .
\tag{h$_3$}
\end{equation}

Conversely, if \eqref{sys2} is endowed with an  entropy pair  $H - Q$ with $H$ convex, 
then  $\eta - q$ defined via \eqref{formdefin} is an entropy pair   for \eqref{hypcl}
where $F(u) := f(A(u))$ and $G(u)$ is selected via \eqref{appform3}. 
The convexity assumption \eqref{hypvucprime} for $H(v)$ translates  to \eqref{hyppd} for $\eta(u)$.
In summary,  \eqref{hypns}, \eqref{hypep} and \eqref{hyppd} are equivalent 
to the usual symmetrizability hypothesis of \cite{FL71}. In particular, they render system \eqref{hypcl} hyperbolic.

Regarding next the hyperbolic-parabolic system~\eqref{hyppar}, it is instructive to compare the structural hypotheses pursued here to the $L^2$ theory of hyperbolic-parabolic systems. As already mentioned, hypothesis~\eqref{hyp1} on the diffusion coefficients render the last term of~\eqref{hypparaen} as semi-positive definite. 
We refer the reader to Kawashima \cite{Kawashima84,Kawashima87} for the early developments and to Liu-Zeng \cite{LZ97} for
the connection to Green's functions for hyperbolic-parabolic systems as well as to Kawashima \cite[Ch~II]{Kawashima84} and Dafermos \cite{daf10} for
results concerning well-posedness of hyperbolic-parabolic systems. A detailed exposition on the structure of dissipative viscous systems is given by Serre in the articles~\cite{S-IMA,S-Cont,S-PhyD} and the symmetry of the dissipation tensor $D_{\alpha\beta}=\nabla G(u)^T B_{\alpha \beta} (u)$ is investigated in connection to the Onsager's principle. We only note here that effective diffusion matrices $D_{\alpha\beta}$ should at least satisfy the well-known Kawashima condition, which guarantees that waves of all characteristic families are properly damped. This can be deduced from hypothesis~\eqref{hyp1} by setting $\xi_\alpha=\nu_\alpha R_i$ with $R_i$ standing for the right eigenvector associated to the $i$-characteristic speed of system~\eqref{hyppar} and $\vec{\nu}\in S^{d-1}$.

\subsection{The relative entropy identity for systems of hyperbolic conservation laws}
\label{secrelen}

In this section, subject to  hypotheses \eqref{hypns}, \eqref{hypep} and \eqref{hyppd},  we extend to the hyperbolic system \eqref{hypcl} a well known calculation developed 
in \cite{dafermos79,diperna79} for the case $A(u) = u$. We note that the calculation here is shown only because it is useful and in preparation for what it comes in the next subsection on hyperbolic-parabolic systems.  We will also return to this in Section \ref{S2.2.2} for proving uniqueness of dissipative measure-valued solutions within the family of strong solutions. Moreover, this calculation is formal, it can however be made rigorous following ideas that are well developed
(see {\it e.g.} \cite[Ch V]{daf10}) and provides a way of comparing a weak entropic to a strong solution of~\eqref{hypcl}.
There exist variants of this calculation that compare entropic measure valued solutions to strong solutions of
hyperbolic conservation laws (see \cite{bds11, dst12}). 

Let $u$ be an entropy weak solution of \eqref{hypcl}, that is $u$ is a weak solution of \eqref{hypcl}
that satisfies in the sense of distributions the inequality
\begin{equation}
\label{entropyhyp}
\begin{aligned}
\del_t \eta (u) + \del_\alpha q_\alpha (u) \le  0 \, .
\end{aligned}
\end{equation}
Let $\bu$ be a strong (conservative) solution of \eqref{hypcl} that is satisfying 
the entropy identity 
\begin{equation}
\label{entropyhypb}
\begin{aligned}
\del_t \eta (\bu) + \del_\alpha q_\alpha (\bu) =  0\,.
\end{aligned}
\end{equation}

We proceed to compute the relative entropy identity for the quantities relative entropy \eqref{defrelen} and relative flux \eqref{defrelfl}.
Observe first that $u$, $\bu$ satisfy the chain of identities
\begin{equation}
\label{diffhyp}
\begin{aligned}
\del_t \Big ( G(\bu) \cdot (A(u) - A(\bu) )& \Big ) + \del_\alpha \Big  ( G(\bu) \cdot ( F_\alpha (u) - F_\alpha (\bu) ) \Big )
\\
&= \nabla G (\bu) \del_t \bu \cdot (A(u) - A(\bu)) 
+
      \nabla G (\bu) \bu_{x_\alpha} \cdot (F_\alpha (u) -  F_\alpha (\bu)  )
\\
&\stackrel{\eqref{hypcl},\eqref{hypns}}{=} - \bu_{x_\alpha} \cdot \nabla F_\alpha (\bu)^T \nabla A (\bu)^{-T} \nabla G( \bu)^T (A(u) - A(\bu)) 
\\
&\qquad 
   + \nabla G(\bu) \bu_{x_\alpha} \cdot (F_\alpha (u) - F_\alpha (\bu))
\\
&\stackrel{\eqref{compat1}}{=}
-  \nabla G(\bu) \bu_{x_\alpha} \cdot \nabla F_\alpha (\bu) \nabla A (\bu)^{-1}  (A(u) - A(\bu)) 
\\
&\qquad
   + \nabla G(\bu) \bu_{x_\alpha} \cdot (F_\alpha (u) - F_\alpha (\bu))
\\
&=:  \nabla G(\bu) \bu_{x_\alpha} \cdot  F_\alpha (u | \bu)\,,
\end{aligned}
\end{equation}
where
\begin{equation}
\label{relflux}
F_\alpha (u | \bu) := F_\alpha (u) - F_\alpha (\bu) - \nabla F_\alpha (\bu)  \nabla A (\bu)^{-1} (A(u) - A(\bu)) \,.
\end{equation}

Combining \eqref{entropyhyp}, \eqref{entropyhypb} and \eqref{diffhyp}, we obtain
\begin{equation}
\label{relenidenhyp}
\del_t  \eta (u | \bu)  + \del_{\alpha} q_\alpha (u | \bu) \le  -   \del_\alpha G (\bu)  \cdot F_\alpha (u | \bu)\,.
\end{equation}
This is the relative entropy inequality associated with the hyperbolic system~\eqref{hypcl} under hypotheses~\eqref{hypns}, \eqref{hypep} and \eqref{hyppd} and the aim is to generalize the above calculation is the presence of viscosity matrices in the next subsection. We emphasize that the above computations via the change of variables $v=A(u)$ is identical to the classical resuls~\cite{dafermos79,diperna79}. However, this is not true for the computations on the hyperbolic-parabolic systems~\eqref{hyppar}.


\subsection{The relative entropy identity for hyperbolic-parabolic systems}
\label{secrelenhyppar}

In this section, the relative entropy calculation is extended between two solutions $u$ and $\bu$ of 
\begin{equation}
\label{hyppara}
\begin{aligned}
\del_t A(u) + \del_\alpha F_\alpha (u) = \eps \del_\alpha ( B_{\alpha \beta} (u) \del_\beta u) \, .
\end{aligned}
\end{equation}
We work under the framework \eqref{hypns}--\eqref{hyppd} and assume that
the viscosity matrices  $D_{\alpha\beta}\doteq \nabla G(u)^T B_{\alpha \beta} (u)$ satisfy hypothesis~\eqref{hyp1}, which guarantees entropy dissipation \eqref{hypparaen} along the evolution.

We consider two solutions $u$ and $\bu$ of \eqref{hyppara} which also satisfy  \eqref{hypparaen} and as before, all calculations will be
performed for strong solutions. However, the reader can easily check that one can assume that 
$u$ is a weak entropy solution while $\bu$ is a strong solution and the calculation can still be
derived in the sense of distributions. This might be useful for problems involving positive semi-definite diffusion matrices where 
one in general expects global existence for weak solutions only.

Subtracting the entropy identities \eqref{hypparaen} for the respective solutions we get 
\begin{equation}
\label{eqn1}
\begin{aligned}
\del_t (\eta(u) - \eta(\bu)) &+ \del_\alpha (q_\alpha (u) - q_\alpha (\bu) ) 
\\
&=  
\eps  \del_\alpha \big ( G(u) \cdot B_{\alpha \beta} (u) \del_\beta u  - G(\bu) \cdot B_{\alpha \beta}(\bu) \del_\beta \bu \big ) 
\\
&\quad -\eps \nabla G(u) u_{x_\alpha} \cdot B_{\alpha \beta} (u) u_{x_\beta}  
+ \eps \nabla G(\bu) \bu_{x_\alpha} \cdot B_{\alpha \beta}(\bu) \bu_{x_\beta}\;.
\end{aligned}
\end{equation}

Proceeding along the steps of the derivation of  \eqref{diffhyp} from starting point the equation  \eqref{hyppara}  we derive the identity 
\begin{equation}
\label{eqn2}
\begin{aligned}
&\del_t \Big ( G(\bu) \cdot (A(u) - A(\bu) ) \Big ) + \del_\alpha \Big  ( G(\bu) \cdot ( F_\alpha (u) - F_\alpha (\bu) ) \Big )
\\
&= \del_t  (G (\bu) ) \cdot  (A(u) - A(\bu)) 
+  \del_\alpha (  G (\bu)) \cdot  (F_\alpha (u) -  F_\alpha (\bu)  )
\\
&\quad + \eps G(\bu) \cdot \del_\alpha \big [ B_{\alpha \beta} (u) \del_\beta u - B_{\alpha \beta} (\bu) \del_\beta \bu \big]
\\
&=   \nabla G(\bu) \bu_{x_\alpha} \cdot  F_\alpha (u | \bu) 
\\
&\quad
+ \eps \nabla G(\bu) (\nabla A(\bu))^{-1} \del_\alpha \Big ( B_{\alpha \beta} (\bu) \del_\beta \bu \Big )  \cdot (A(u) - A(\bu) )
\\
&\quad + \eps G(\bu) \cdot \del_\alpha \big [ B_{\alpha \beta} (u) \del_\beta u - B_{\alpha \beta} (\bu) \del_\beta \bu \big]\;.
\end{aligned}
\end{equation}

We subtract \eqref{eqn2} from \eqref{eqn1} and use \eqref{defrelen},  \eqref{defrelfl}, \eqref{compat1}, \eqref{compat2} and an integration by parts
to obtain
\begin{equation}
\label{eqn3}
\begin{aligned}
\del_t  \eta (u | \bu)  + \del_{\alpha} q_\alpha (u | \bu) = - \nabla G (\bu)  (\del_{\alpha} \bu ) \cdot F_\alpha (u | \bu)
+ \eps \del_{x_\alpha} J_\alpha + \eps K\;,
\end{aligned}
\end{equation}
where the flux $J_\alpha$ and the term $K$ are defined by
\begin{align}
J_\alpha &:= G(u) \cdot B_{\alpha \beta} (u) u_{x_\beta} - G(\bu) \cdot B_{\alpha \beta} (\bu) \bu_{x_\beta}
\nonumber
\\ 
&\quad - G(\bu) \cdot ( B_{\alpha \beta}(u) u_{x_\beta} - B_{\alpha \beta} (\bu) \bu_{x_\beta} )
\nonumber
\\
&\quad - B_{\alpha \beta} (\bu) \bu_{x_\beta} \cdot \nabla A(\bu)^{-T} \nabla G(\bu)^T (A(u) - A(\bu)) \;,
\label{defin10}
\\
K &= - \nabla G(u) u_{x_\alpha} \cdot B_{\alpha \beta}(u)  u_{x_\beta} 
    +  \nabla G(\bu) \bu_{x_\alpha} \cdot B_{\alpha \beta}(\bu)  \bu_{x_\beta}
\nonumber
\\ 
&\quad + \nabla G (\bu) \bu_{x_\alpha} \cdot \big ( B_{\alpha \beta}(u)  u_{x_\beta} - B_{\alpha \beta}(\bu)  \bu_{x_\beta} \big )
\nonumber
\\
&\quad + B_{\alpha \beta}(\bu)  \bu_{x_\beta} \cdot  \nabla G(\bu) \del_\alpha \big ( \nabla A (\bu)^{-1} (A(u) - A(\bu) ) \big )
\nonumber
\\
&\quad + B_{\alpha \beta}(\bu)  \bu_{x_\beta} \cdot  \nabla^2 G(\bu)  : \Big ( \bu_{x_\alpha} ,  \nabla A (\bu)^{-1} (A(u) - A(\bu) )  \Big ) \, .
\label{defin11}
\end{align}
Equation \eqref{eqn3} is the basic relative entropy identity. In the sequel we rearrange the terms so that
 $J_\alpha$ and $K$ are expressed in a more revealing form.

First, using \eqref{compat1}, we rewrite
\begin{equation}
\label{defvf1}
\begin{aligned}
J_\alpha 
&= (G(u) - G(\bu)) \cdot ( B_{\alpha \beta} (u) u_{x_\beta} - B_{\alpha \beta} (\bu) \bu_{x_\beta}  )
\\
&\qquad +  B_{\alpha \beta} (\bu) \bu_{x_\beta} \cdot  \Big [ G(u) - G(\bu) - \nabla G(\bu) \nabla A(\bu)^{-1} (A(u) - A(\bu) ) \Big ]
\\
&= (G(u) - G(\bu)) \cdot ( B_{\alpha \beta} (u) u_{x_\beta} - B_{\alpha \beta} (\bu) \bu_{x_\beta}  ) + 
B_{\alpha \beta} (\bu) \bu_{x_\beta} \cdot G(u | \bu)\;,
\end{aligned}
\end{equation}
where $G(u|\bu)$ is defined by
\begin{equation}
\label{relmult}
G(u| \bu) := G(u) - G(\bu) - \nabla G (\bu) \nabla A(\bu)^{-1} (A(u) - A(\bu)) \, 
\end{equation}
and  is of order $O(|u - \bu|^2)$ as $| u - \bu| \to 0$.

The quantity $K$ in \eqref{defin11} is rewritten as
\begin{align}
K &= - \Big ( \nabla G(u) u_{x_\alpha} - \nabla G (\bu) \bu_{x_\alpha} \Big ) \cdot 
\Big ( B_{\alpha \beta}(u)  u_{x_\beta} - B_{\alpha \beta}(\bu)  \bu_{x_\beta} \Big )
\nonumber
\\
&\quad + B_{\alpha \beta}(\bu)  \bu_{x_\beta} \cdot \Big [  - \nabla G(u) u_{x_\alpha} +  \nabla G (\bu) \bu_{x_\alpha}
\nonumber
\\
&\qquad \qquad  \qquad \qquad   + \nabla G(\bu)  \del_\alpha \big ( \nabla A (\bu)^{-1} (A(u) - A(\bu) ) \big ) 
\nonumber
\\
&\qquad \qquad  \qquad \qquad  + \nabla^2 G(\bu) : \big ( \bu_{x_\alpha} , \nabla A (\bu)^{-1} (A(u) - A(\bu)  )\big )  \Big ]
\nonumber
\\
&= : T_1 + B_{\alpha \beta}(\bu)  \bu_{x_\beta} \cdot T_{\alpha , 2 }\;.
\label{eqn101}
\end{align}
Observe that term $ T_{\alpha , 2 }$ is rewritten as
\begin{equation}
\label{eqn102}
\begin{aligned}
T_{\alpha , 2 } &=  - \nabla G(u) u_{x_\alpha} +  \nabla G (\bu) \bu_{x_\alpha}
\\
&\quad + \nabla G(\bu)  \del_\alpha \big ( \nabla A (\bu)^{-1} (A(u) - A(\bu) )  - (u - \bu) \big ) 
\\
&\quad +\nabla G(\bu) \del_{\alpha} (u - \bu) + \nabla^2 G(\bu) : \big ( \bu_{x_\alpha} , \nabla A (\bu)^{-1} (A(u) - A(\bu)  )\big ) 
\\
&= \nabla G (\bu) \del_{\alpha} \phi(u | \bu) -  \big ( \nabla G(u) -  \nabla G (\bu) \big ) ( u_{x_\alpha} - \bu_{x_\alpha})
- L(u | \bu) \bu_{x_\alpha}\,,
\end{aligned}
\end{equation}
where we set
\begin{align}
\label{defin1}
\phi (u | \bu) &:= \nabla A (\bu)^{-1} (A(u) - A(\bu) )  - (u - \bu)\;,
\\
\label{defin2}
L(u | \bu) &:= \nabla G(u) - \nabla G (\bu) - \nabla^2 G (\bu) \cdot \nabla A(\bu)^{-1} (A(u) - A(\bu) )\;,
\end{align}
and note that both terms are quadratic as $|u - \bu| \to 0$. In turn,
\begin{align}
\label{eqn103}
B_{\alpha \beta} (\bu)  \bu_{x_\beta} \cdot T_{\alpha , 2 } &=
\del_{x_\alpha} \Big ( B_{\alpha \beta}(\bu)  \bu_{x_\beta} \cdot \nabla G (\bu)  \phi(u | \bu) \Big ) 
- \del_{x_\alpha} \big ( \nabla G (\bu)^T B_{\alpha \beta}(\bu)  \bu_{x_\beta} \big ) \cdot \phi(u | \bu) 
\nonumber
\\
&\quad -  B_{\alpha \beta}(\bu)  \bu_{x_\beta} \cdot \big ( \nabla G(u) -  \nabla G (\bu) \big ) ( u_{x_\alpha} - \bu_{x_\alpha})
- B_{\alpha \beta}(\bu)  \bu_{x_\beta} \cdot L(u | \bu) \bu_{x_\alpha}
\nonumber
\\
&= \del_{x_\alpha} j_\alpha + Q_1 + Q_2 + Q_3\,,
\end{align}
where we have set 
\begin{equation}
\label{defvf3}
j_\alpha := B_{\alpha \beta}(\bu)  \bu_{x_\beta} \cdot \nabla G (\bu)  \phi(u | \bu)\;,
\end{equation}
and
\begin{align}
Q_1 &:= - \del_{x_\alpha} \big ( \nabla G (\bu)^T B_{\alpha \beta}(\bu)  \bu_{x_\beta} \big ) \cdot \phi(u | \bu) \;,
\label{definq1}
\\
Q_2 &:= -  B_{\alpha \beta}(\bu)  \bu_{x_\beta} \cdot \big ( \nabla G(u) -  \nabla G (\bu) \big ) ( u_{x_\alpha} - \bu_{x_\alpha})\;,
\label{definq2}
\\
Q_3 &:= - B_{\alpha \beta}(\bu)  \bu_{x_\beta} \cdot L(u | \bu) \bu_{x_\alpha}\,.
\label{definq3}
\end{align}
On the other hand the term $T_1$ is rearranged as
\begin{equation}
\label{eqn107}
\begin{aligned}
T_1 &= - \Big ( \nabla G(u) u_{x_\alpha} - \nabla G (\bu) \bu_{x_\alpha} \Big ) \cdot 
\Big ( B_{\alpha \beta}(u)  u_{x_\beta} - B_{\alpha \beta}(\bu)  \bu_{x_\beta} \Big )
\\
&=
-  \nabla G (u)  \del_\alpha ( u - \bu) \cdot   B_{\alpha \beta} (u) \, \del_{\beta} (u - \bu)
+ Q_4 + Q_5+ Q_6
\\
&= - D +Q_4 + Q_5+ Q_6\;,
\end{aligned}
\end{equation}
where
\begin{align}
Q_4 &:= - \nabla G(u) ( u_{x_\alpha} - \bu_{x_\alpha} ) \cdot ( B_{\alpha \beta} (u) - B_{\alpha \beta} (\bu) ) \bu_{x_\beta}
\label{definq4}
\\
Q_5 &:= -  ( \nabla G(u)  - \nabla G (\bu) ) \bu_{x_\alpha} \cdot B_{\alpha \beta} (u) ( u_{x_\beta} - \bu_{x_\beta} )
\label{definq5}
\\
Q_6 &:=  -  ( \nabla G(u)  - \nabla G (\bu) ) \bu_{x_\alpha} \cdot ( B_{\alpha \beta} (u) - B_{\alpha \beta} (\bu) ) \bu_{x_\beta}
\label{definq6}
\end{align}
are quadratic terms (viewed as errors) while by virtue of \eqref{hyp1} the term $D$ is positive semi-definite, 
$$
D := \sum_{\alpha,\,\beta}\nabla G (u)  \del_\alpha ( u - \bu) \cdot   B_{\alpha \beta} (u) \, \del_{\beta} (u - \bu) \;  \ge  \; 0 \, , 
$$
and captures the effect of dissipation by taking $\xi_\alpha=\del_\alpha(u-\bu)\in\R^n$.

We conclude that the term $K$, defined in \eqref{defin11},  can be  reorganized using \eqref{eqn101}, \eqref{eqn103} and \eqref{eqn107} in the form
$$
K =  \del_{x_\alpha} j_\alpha - D +  Q_1 + Q_2 + Q_3 + Q_4 + Q_5+ Q_6\,.
$$

Putting everything together we obtain the final form of the relative entropy identity
\begin{equation}
\label{eqnrelen}
\begin{aligned}
\del_t  \eta (u | \bu)  &+ \del_{\alpha} q_\alpha (u | \bu)  
+ \eps \nabla G (u)  \del_\alpha ( u - \bu) \cdot   B_{\alpha \beta} (u) \, \del_{\beta} (u - \bu)
\\
&= - \nabla G (\bu)  (\del_{\alpha} \bu ) \cdot F_\alpha (u | \bu)
+ \eps \del_{x_\alpha} (  J_\alpha + j_\alpha)  +  \eps \sum_{i=1}^6 Q_i
\end{aligned}
\end{equation}
where the relative entropy is defined in \eqref{defrelen}, the relative flux in \eqref{defrelfl}, the terms $F_\alpha(u|\bu)$ in~\eqref{relflux}
the viscous fluxes $J_\alpha$ and $j_\alpha$ in \eqref{defvf1},   \eqref{defvf3}, 
and the quadratic "error" terms $Q_i$ in \eqref{definq1}, \eqref{definq2}, \eqref{definq3}, \eqref{definq4}, \eqref{definq5}
and \eqref{definq6} respectively. We will see the significance of the above identity in the following two subsections in establishing theorems for comparing to solutions $u$ and $\bu$ and the importance of having all the terms on right-hand side expressed in this particular form.

\subsection{Stability of viscous solutions}
\label{sec-stab}
The objective in this section is to prove stability of smooth solutions to~\eqref{hyppara} using the relative entropy identity~\eqref{eqnrelen}. 
Once the identity \eqref{eqnrelen} is established, the remainder of the theorem is  an extension to the case of hyperbolic-parabolic systems  of 
theorems regarding stabilization of hyperbolic systems ({\it e.g.} \cite[Thm 5.3.1]{daf10}). Since we will work on the whole space $\R^d\times [0,T]$, we assume that the solutions decay as $|x| \to \infty$; such hypotheses are usually validated as part of an existence theory, but we do not
pursue this aspect here. We also remark that one may  easily extend the theorem below to the case of periodic solutions: $\bar Q_T=\To^d\times[0,T]$.
Moreover, we denote by $B_M$ the bounded set $B_M=\{\bu\in\R^n: |\bu|\le M\}$ of size $M>0$ and in applications 
of the relative entropy method one of the functions (or sometimes both) takes values in $B_M$.

The following theorem establishes the $L^2$-stability of viscous solutions w.r.t. initial data, when both solutions are bounded under the hypotheses assumed in Section~\ref{secprel}

\begin{theorem}
\label{thmstability}
Let $u$, $\bar u$ be smooth solutions of~\eqref{hyppara}  defined on $\R^d\times[0,T]$  such that $u$, $\del_\alpha u$ and $\bu$, $\del_\alpha \bu$ decay 
sufficiently fast at infinity, and emanating from smooth initial data $u_0$, $\bar u_0$, with $u_0, \bu_0 \in (L^\infty \cap L^2 )(\R^d)$. 
Assume the hypotheses~\eqref{hypns},~\eqref{hypep} ~\eqref{hyppd}  and~\eqref{hyp1strong} hold true 
and suppose that $u$ and $\bar u$ take values in a ball $B_M\subset\R^n$ of radius $M>0$. Then there exists a constant 
$C=C(T,\gamma,M,\nabla\bar u)>0$ independent of $\eps$,  such that
\begin{equation}\label{S2.3.1stability}
\|u(t)-\bar u(t)\|_{L^2(\R^d)}\le C_T  \|u_0-\bar u_0\|_{L^2(\R^d)}\;.
\end{equation}
\end{theorem}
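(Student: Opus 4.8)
The plan is to integrate the relative entropy identity \eqref{eqnrelen} over $\R^d\times[0,t]$ and derive a Gr\"onwall inequality for the quantity $\int_{\R^d}\eta(u|\bu)\,dx$. First I would integrate in space: the flux terms $\del_\alpha q_\alpha(u|\bu)$ and $\eps\,\del_{x_\alpha}(J_\alpha+j_\alpha)$ integrate to zero by the decay hypotheses at infinity. Crucially, the dissipation term $\eps\,\nabla G(u)\del_\alpha(u-\bu)\cdot B_{\alpha\beta}(u)\del_\beta(u-\bu)=\eps D$ on the left is nonnegative by \eqref{hyp1strong}, and in fact by \eqref{hyp1sp} it bounds $\eps\nu(u)\sum_\alpha|\del_\alpha(u-\bu)|^2$ from below. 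The strategy is to keep this good term on the left and use it to absorb the dangerous error terms $Q_i$ that involve the gradients $\del_\alpha(u-\bu)$.

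\smallskip

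The heart of the matter is estimating the six error terms $Q_1,\dots,Q_6$. The plan is to split them according to whether they contain the derivative $\del_\alpha(u-\bu)$ or only the undifferentiated difference $u-\bu$. The terms $Q_2,Q_4,Q_5$ each carry one factor $\del_\alpha(u-\bu)$ multiplied by a factor that is $O(|u-\bu|)$ (since $\nabla G(u)-\nabla G(\bu)$, $B_{\alpha\beta}(u)-B_{\alpha\beta}(\bu)$ are Lipschitz on the ball $B_M$) times a bounded derivative $\del_\alpha\bu$. For these I would apply Young's inequality $ab\le \tfrac{\delta}{2}a^2+\tfrac{1}{2\delta}b^2$, placing the derivative factor into the $a^2$ slot so that after multiplication by $\eps$ it is absorbed by $\eps\nu\sum_\alpha|\del_\alpha(u-\bu)|^2$ for $\delta$ small, leaving a remainder controlled by $|u-\bu|^2$ times $\eps|\nabla\bu|^2$. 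The remaining terms $Q_1,Q_3,Q_6$ involve only $u-\bu$ together with bounded derivatives of $\bu$; here I would use that $\phi(u|\bu)$, $L(u|\bu)$, $G(u|\bu)$ are all $O(|u-\bu|^2)$, together with the comparison estimates relating $|u-\bu|^2$ to $\eta(u|\bu)$ (via positive-definiteness \eqref{hyppd} and boundedness on $B_M$, the estimates deferred to Appendix~\ref{s-A}). Similarly the hyperbolic term $-\nabla G(\bu)(\del_\alpha\bu)\cdot F_\alpha(u|\bu)$ is handled using $F_\alpha(u|\bu)=O(|u-\bu|^2)$ and $\eta(u|\bu)\sim|u-\bu|^2$.

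\smallskip

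Collecting these bounds, the absorbed terms leave a nonnegative dissipation on the left, and everything else is majorized by $C(1+\eps)\int_{\R^d}\eta(u|\bu)\,dx$, where $C$ depends on $T$, $M$, the ellipticity ratios $\nu,N$, and $\sup|\nabla\bu|$ through the constitutive bounds on $B_M$. This yields
$$
\frac{d}{dt}\int_{\R^d}\eta(u|\bu)\,dx\le C\int_{\R^d}\eta(u|\bu)\,dx,
$$
and Gr\"onwall's inequality gives $\int\eta(u|\bu)(t)\,dx\le e^{Ct}\int\eta(u_0|\bu_0)\,dx$. Finally, the two-sided comparison $c_1|u-\bu|^2\le\eta(u|\bu)\le c_2|u-\bu|^2$ on $B_M$ converts this into the stated $L^2$ bound \eqref{S2.3.1stability} with $C_T=\sqrt{c_2/c_1}\,e^{CT/2}$.

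\smallskip

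\textbf{Main obstacle.} The delicate point I expect to be hardest is the bookkeeping that guarantees every occurrence of the highest-order factor $\del_\alpha(u-\bu)$ can be absorbed into the single good dissipation term $\eps D$ with a small enough coefficient, while simultaneously confirming that the ``leftover'' coefficients stay uniformly bounded in $\eps$. This requires that each such factor appear at most linearly and always paired with an $O(|u-\bu|)$ smallness, which is precisely why the identity \eqref{eqnrelen} had to be organized so carefully. Verifying uniformity of $C$ in $\eps$ — so that the rate is genuinely $\eps$-independent — is the substantive content, since a careless estimate would produce an $\eps^{-1}$ blow-up from the $\eps$ multiplying $D$.
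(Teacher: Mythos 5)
Your proposal is correct and follows essentially the same route as the paper's proof: integrate \eqref{eqnrelen} in space-time, use the equivalence of $\int\eta(u|\bu)\,dx$ with $\|u-\bu\|_{L^2}^2$ on $B_M$, bound the hyperbolic term and $Q_1,Q_3,Q_6$ directly by $C\|u-\bu\|_{L^2}^2$, absorb the gradient factors in $Q_2,Q_4,Q_5$ into the dissipation via Young's inequality and \eqref{hyp1strong}, and close with Gr\"onwall. The grouping of the error terms and the $(1+\eps)$ bookkeeping match the paper exactly.
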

\begin{proof}
Let us set 
\begin{equation}
\label{defphi}
\varphi(t)\doteq\int_{\R^d}\eta(u(t)|\bar u(t))\,dx\qquad t\in[0,T].
\end{equation}
As $u$, $\bu$ take values in $B_M$, hypothesis  \eqref{hyppd} implies 
that, for some $c  = c(M) > 0$,
\begin{equation}
\label{hyppdprime}
\nabla^2 \eta (u) - G(u) \cdot  \nabla^2 A (u) \ge c I > 0 \, ,
\tag{H$_3^\prime$}
\end{equation}
while hypothesis \eqref{hyp1strong} implies, for some $\gamma = \gamma (M) > 0$,
\begin{equation}
\label{hyp4prime}
\sum_{\alpha,\beta}\xi_\alpha\cdot \nabla G(u)^T B_{\alpha \beta} (u)\xi_\beta \ge \gamma \sum_\alpha|\xi_\alpha|^2  >0\quad \forall \xi_\alpha, \xi_\beta\in\R^n\setminus \{0\}.
\tag {H$_4^\prime$}
\end{equation}
Moreover, there exists a positive constant $C=C(c,M)$ such that
\begin{equation}\label{S2.33.1L2}
\frac{1}{C} \| A(u ) - A(\bar u) \|_{L^2(\R^d)}^2  \le\varphi(t)\le C \| A(u ) - A(\bar u) \|_{L^2(\R^d)}^2 \, 
\end{equation}
and, by \eqref{hypns},  this implies
\begin{equation}\label{S2.3.1L2}
\frac{1}{C} \|u(t)-\bar u(t)\|_{L^2(\R^d)}^2  \le\varphi(t)\le C\|u(t)-\bar u(t)\|_{L^2(\R^d)}^2.
\end{equation}

We employ~\eqref{eqnrelen} and integrate over $\R^d\times[0,t]$ to get
\begin{equation}\label{S2.3.1 phi}
\begin{aligned}
\varphi(t)+\eps\gamma\int_0^t\int_{\R^d}|\nabla u(s)-\nabla\bar u(s)|^2\,dx\,ds\le&\varphi(0)-\int_0^t\int_{\R^d}\nabla G (\bu)  (\del_{\alpha} \bu ) \cdot F_\alpha (u(s) | \bu(s))\,dx\,ds\\
&+\eps\sum_{i=1}^6 \int_0^t\int_{\R^d} Q_i(x,s)\,dx\,ds\;.
\end{aligned}
\end{equation}
Now, we investigate the terms on the right-hand side of~\eqref{S2.3.1 phi} using that $u,\bu\in B_M$. By~\eqref{relflux}, \eqref{defin1}, \eqref{defin2} and the terms~\eqref{definq1}--\eqref{definq3},~\eqref{definq4}--\eqref{definq6}, we get the bounds
\begin{equation}
\left|\int_{\R^d}\nabla G (\bu)  (\del_{\alpha} \bu ) \cdot F_\alpha (u(s) | \bu(s))\,dx  \right|\le C \|u(s)-\bar u(s)\|_{L^2(\R^d)}^2,
\end{equation}
\begin{equation}
\left| \int_{\R^d}  Q_1(s)+Q_3(s)+Q_6(s)\, dx         \right|\le C\|u(s)-\bar u(s)\|_{L^2(\R^d)}^2,
\end{equation}
\begin{equation}\label{S2.3.1Q5}
\left| \int_{\R^d} Q_2(s)+Q_4(s)+Q_5(s) dx         \right|\le C\|u(s)-\bar u(s)\|_{L^2(\R^d)}^2+\gamma\int_{\R^d}|\nabla u(s)-\nabla\bar u(s)|^2\,dx,
\end{equation}
for $s\in[0,t]$ having $C$ a universal constant depending on $c$, the radius $M$, $\gamma$ and the derivatives of $\bar u$. Combining~\eqref{S2.3.1L2}--\eqref{S2.3.1Q5}, we arrive at
\begin{equation}
\|u(t)-\bar u(t)\|_{L^2(\R^d)} ^2\le C\left(\|u_0-\bar u_0\|_{L^2(\R^d)}^2+(1+\eps)\int_0^t\|u(s)-\bar u(s)\|_{L^2(\R^d)}^2\,ds\right)
\end{equation}
 and conclude the result via Gronwall's lemma for $0<\eps<\eps_0$. Note that the constant $C$ in~\eqref{S2.3.1stability} will also depend on time $T$ and $\eps_0$.
\end{proof}

\subsection{Convergence  in the zero-viscosity limit} 
\label{sec-conv}
In this section, we consider a family  $\{ u_\eps \}$ of smooth solutions of the hyperbolic-parabolic system \eqref{hyppara} 
defined on $\bar Q_T = \To^d \times [0, \infty)$
which satisfy the entropy dissipation identities \eqref{hypparaen}. Let $\bu$ be a smooth solution of the hyperbolic system of conservation laws 
\eqref{hypcl},  defined on a maximal interval of existence $[0, T^*)$ with  $T^* \le \infty$, and satisfying
the entropy conservation identity~\eqref{addcl}.
Our objective is to show convergence of the family $\{ u_\eps \}$
 to the smooth solution $\bu$ on $\To^d \times [0,T]$, for $T < T^*$, as $\eps \to 0$.

First, we establish the analog of the relative entropy identity when comparing a solution $u$ of \eqref{hyppar} to a solution $\bu$ of \eqref{hypcl}. 
With $u$ and $\bu$ as noted, the analog of \eqref{eqn1} now reads
\begin{equation}
\label{conveqn1}
\begin{aligned}
&\del_t (\eta(u) - \eta(\bu)) + \del_\alpha (q_\alpha (u) - q_\alpha (\bu) ) 
\\
&=  
\eps  \del_\alpha \big ( G(u) \cdot B_{\alpha \beta} (u) \del_\beta u  \big ) 
-\eps \nabla G(u) u_{x_\alpha} \cdot B_{\alpha \beta} (u) u_{x_\beta}  \;,
\end{aligned}
\end{equation}
while the analog of \eqref{eqn2} is
\begin{equation}
\label{conveqn2}
\begin{aligned}
&\del_t \Big ( G(\bu) \cdot (A(u) - A(\bu) ) \Big ) + \del_\alpha \Big  ( G(\bu) \cdot ( F_\alpha (u) - F_\alpha (\bu) ) \Big )
\\
&=   \nabla G(\bu) \bu_{x_\alpha} \cdot  F_\alpha (u | \bu) 
+ \eps G(\bu) \cdot \del_\alpha \big (  B_{\alpha \beta} (u) \del_\beta u  \big ) \, , 
\end{aligned}
\end{equation}
hence leading to a relative entropy identity in the form
\begin{equation}
\label{conveqnrelen}
\begin{aligned}
\del_t  \eta (u | \bu)  &+ \del_{\alpha} \Big [ q_\alpha (u | \bu)  -  (G(u) - G(\bu) ) \cdot B_{\alpha \beta} (u) \, \del_{\beta} u \Big ]
+ \eps \nabla G (u)  \del_\alpha u \cdot   B_{\alpha \beta} (u) \, \del_{\beta} u 
\\
&= - ( \del_{\alpha}  G (\bu)  ) \cdot F_\alpha (u | \bu)
+ \eps ( \del_{\alpha}  G (\bu)  ) \cdot B_{\alpha \beta} (u) \del_{\beta} u \, .
\end{aligned}
\end{equation}

We proceed to prove a general convergence theorem for zero viscosity limits  to strong solutions.  To this end condition  \eqref{hyp1sp} on the viscosity matrices is replaced by the following  hypothesis: 
\begin{equation}
\label{DH}
\sum_{\alpha,\beta}\nabla G(u)\del_\alpha u\cdot B_{\alpha\beta}(u)\del_\beta u\ge \mu\sum_\alpha|B_{\alpha\beta}(u)\del_\beta u|^2\;,
\tag {H$_5$}
\end{equation}
for any solution $u$ to~\eqref{hyppar}  with some positive constant $\mu$, depending possibly on $u$. We first note that~\eqref{DH} can be also written in a general form using $\xi_\alpha\in\R^n$ instead of the gradients $\del_\alpha u$ in a similar fashion as in~\eqref{hyp1}. Here it is used in the following theorem as stated above.
We next observe that~\eqref{DH} is a sufficient condition, though not necessary, to render the third term on the left-hand side of~\eqref{conveqnrelen} dissipative and via~\eqref{DH}, this term dominates the last term of~\eqref{conveqnrelen}. This is the motivation given by Dafermos~\cite[Chapter IV]{daf10} for stating this condition that actually allows general viscosity matrices not necessarily non-degenerate for the convergence to the zero-viscosity limit as we establish in the following theorem. 
We also remark that when $B_{\alpha\beta}$ vanishes for $\alpha\ne \beta$ and is the identity otherwise, then condition~\eqref{DH} reduces to~\eqref{hyp1sp} in the general setting that $\xi_\alpha\doteq\del_\alpha u$. Last, we mention that hypothesis~\eqref{DH} is also related to the Kawashima condition when $\xi_\alpha=\nu_\alpha R_i$. See also the articles by Serre~\cite{S-PhyD, S-Cont, S-IMA} for further discussion.

Hypothesis~\eqref{DH} expanded in coordinates is rewritten as
$$
\sum_{i,j}\sum_{\alpha,\,\beta}\del_\alpha G^i(u) B_{\alpha\beta}^{ij}(u)\del_\beta u^j\ge\mu\sum_{\alpha,i}\Big|\sum_{\beta,\,j} B_{\alpha\beta}^{ij}(u)\del_\beta u^j\Big|^2\;.
$$

We now prove the convergence:

\begin{theorem}\label{thmconv}
Let $\bar u$ be a Lipshitz solution of \eqref{hypcl} defined on a maximal interval of existence $\To^d\times[0,T^*)$ and 
let $u^\eps$ be smooth solutions of~\eqref{hyppar} defined on $\To^d\times[0,T]$, $T < T^*$, and  emanating from smooth data $\bar u_0$, $u^\eps_0$, respectively. 
Assume the hypotheses~\eqref{hypns},~\eqref{hypep}, \eqref{hyppd} and \eqref{DH}  hold true and that the solution $\bar u$ takes values in a ball $B_M\subset\R^n$ of radius $M>0$. Then there exists a constant $C=C(T, M, \sup |\nabla\bar u|,\mu )$ independent of $\eps > 0$ such that
\begin{equation}
\label{Sconvergence}
\int_{\To^d} \eta (u^\eps | \bu ) \, dx  \le C  \Big (\int_{\To^d} \eta (u_0^\eps | \bu _0 ) \, dx  +  \eps  \int_0^T \int_{\To^d} | \nabla \bu|^2 \, dx ds \Big ) \,.
\end{equation}
In particular, if $ \int_{\To^d} \eta (u_0^\eps | \bu _0 ) \, dx \to 0$
and the solution $\bu$ satisfies the integrability $\nabla \bu \in L^2 ([0,T] \times \To^d)$ 
then 
\begin{equation}
\label{relenconv}
\sup_{ t \in (0, T)} \int_{\To^d} \eta (u^\eps (t)  | \bu (t)  ) \, dx  \to 0 \qquad \mbox{as $\eps \to 0$}.
\end{equation}
\end{theorem}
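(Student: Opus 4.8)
The plan is to integrate the relative entropy identity \eqref{conveqnrelen} over the torus $\To^d$ and derive a Gr\"onwall inequality for the functional $t\mapsto\int_{\To^d}\eta(u^\eps|\bu)\,dx$, where the whole point is to absorb the $O(\eps)$ cross term on the right into the entropy dissipation furnished by hypothesis \eqref{DH}. Throughout I write $u$ for $u^\eps$. Integrating \eqref{conveqnrelen} in $x$ and using periodicity, the spatial divergence $\del_\alpha[\,q_\alpha(u|\bu)-(G(u)-G(\bu))\cdot B_{\alpha\beta}(u)\del_\beta u\,]$ integrates to zero, leaving
\[
\begin{aligned}
\frac{d}{dt}\int_{\To^d}\eta(u|\bu)\,dx &+ \eps\int_{\To^d}\nabla G(u)\del_\alpha u\cdot B_{\alpha\beta}(u)\del_\beta u\,dx \\
&= -\int_{\To^d}(\del_\alpha G(\bu))\cdot F_\alpha(u|\bu)\,dx + \eps\int_{\To^d}(\del_\alpha G(\bu))\cdot B_{\alpha\beta}(u)\del_\beta u\,dx .
\end{aligned}
\]

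For the first term on the right I would use that $\bu$ is Lipschitz and valued in $B_M$, so $\del_\alpha G(\bu)=\nabla G(\bu)\,\bu_{x_\alpha}$ is bounded by a constant depending on $M$ and $\sup|\nabla\bu|$; combined with the comparison bound $|F_\alpha(u|\bu)|\le C\,\eta(u|\bu)$ for the relative flux (valid near the diagonal, where $F_\alpha(u|\bu)$ is quadratic and $\eta(u|\bu)$ is comparable to $|u-\bu|^2$ by \eqref{hyppd}), this term is dominated by $C\int_{\To^d}\eta(u|\bu)\,dx$. The crux is the $\eps$ cross term: abbreviating $w_\alpha:=B_{\alpha\beta}(u)\del_\beta u$, hypothesis \eqref{DH} makes the dissipation on the left at least $\eps\mu\int_{\To^d}\sum_\alpha|w_\alpha|^2\,dx$, while the cross term is $\eps\int_{\To^d}\sum_\alpha(\del_\alpha G(\bu))\cdot w_\alpha\,dx$. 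A Young inequality,
\[
\eps\Big|\int_{\To^d}\sum_\alpha(\del_\alpha G(\bu))\cdot w_\alpha\,dx\Big| \le \frac{\eps\mu}{2}\int_{\To^d}\sum_\alpha|w_\alpha|^2\,dx + \frac{\eps}{2\mu}\int_{\To^d}\sum_\alpha|\del_\alpha G(\bu)|^2\,dx ,
\]
lets me absorb the first half into the dissipation and bound the remainder by $\eps C\int_{\To^d}|\nabla\bu|^2\,dx$. This step is precisely what accommodates degenerate viscosity matrices, since only control of $|B_{\alpha\beta}\del_\beta u|^2$ is required, and it is the mechanism producing the $\eps\int|\nabla\bu|^2$ contribution in \eqref{Sconvergence}.

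Dropping the (nonnegative) leftover dissipation, I obtain the differential inequality $\frac{d}{dt}\int_{\To^d}\eta(u|\bu)\,dx\le C\int_{\To^d}\eta(u|\bu)\,dx+\eps C\int_{\To^d}|\nabla\bu|^2\,dx$; integrating in time on $[0,t]$ and applying Gr\"onwall's lemma yields \eqref{Sconvergence} with $C=C(T,M,\sup|\nabla\bu|,\mu)$. The limit \eqref{relenconv} is then immediate, since if $\int_{\To^d}\eta(u_0^\eps|\bu_0)\,dx\to0$ and $\nabla\bu\in L^2([0,T]\times\To^d)$ the right-hand side of \eqref{Sconvergence} tends to $0$ uniformly in $t\in(0,T)$.

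I expect the main obstacle to be the comparison bound $|F_\alpha(u|\bu)|\le C\,\eta(u|\bu)$ holding uniformly over the range of the family $\{u^\eps\}$: it is automatic near the diagonal from convexity \eqref{hyppd} and smoothness, but since $u^\eps$ is not assumed bounded, controlling it for large $|u-\bu|$ needs either a growth restriction on the constitutive functions or the comparison estimates of Appendix~\ref{s-A}. Once that bound is secured, the remainder is a routine absorption-plus-Gr\"onwall argument.
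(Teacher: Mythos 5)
Your proposal is correct and follows essentially the same route as the paper: integrate \eqref{conveqnrelen} over $\To^d$, bound the hyperbolic error term by $C\int\eta(u^\eps|\bu)\,dx$, absorb the $O(\eps)$ cross term into the dissipation via Young's inequality and \eqref{DH}, and conclude by Gr\"onwall. The obstacle you flag at the end is exactly how the paper handles it too — the bound $|F_\alpha(u|\bu)|\le C\,\eta(u|\bu)$ for $\bu\in B_M$ and unbounded $u$ is supplied by estimate \eqref{lemform2} of Lemma \ref{lemuseful} in Appendix~\ref{s-A}, which rests on the growth conditions \eqref{hypetap}--\eqref{hypAgrowth}.
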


\begin{proof}
We define $\varphi^\eps (t)$ as before to be
$$
\varphi^\eps (t)  \doteq\int_{\To^d}\eta(u^\eps (x,t)  | \bar u (x,t) )\,dx\qquad t\in[0,T].
$$
and by integrating the relative entropy identity \eqref{conveqnrelen}, we arrive at 
\begin{equation}
\label{conveqnrelen-2}
\begin{aligned}
\frac{d \varphi^\eps}{dt} &+\eps\int_{\To^d}\big(\nabla G(u^\eps)^TB_{\alpha\,\beta}(u^\eps)\del_\beta u^\eps\big)\cdot\del_\alpha u^\eps\,dx\\
&\le  C  \int_{\To^d}   | F_\alpha (u^\eps | \bu ) | \, dx+\eps\int_{\To^d}|\del_\alpha G(\bu)\cdot\big( B_{\alpha\,\beta}(u^\eps)\del_\beta u^\eps\big)|\,dx,
\end{aligned}
\end{equation}
since $\bu$ takes values in $B_M$. Throughout this proof, $C= C(T,  M, |\nabla \bu|)$ stands for a generic constant depending on $|\nabla \bu|$ but independent of $\eps$. Now under Hypothesis~\eqref{DH}, we estimate
\begin{equation}
\label{thm2.9estdiss}
\begin{aligned}
\eps\int_{\To^d}|\del_\alpha G(\bu)&\cdot B_{\alpha\,\beta}(u^\eps)\del_\beta u^\eps|\,dx 
=\eps\int_{\To^d}\left|\sum_{i,j}\sum_{\alpha,\beta}\del_\alpha G^i(\bu)B_{\alpha\beta}^{ij}(u^\eps)\del_\beta u^{\eps,j}\right|dx\\
&
\le \eps
\int_{\To^d}\Big(\sum_{i,\alpha}|\del_\alpha G^i(\bu)\big|^2\Big)^{1/2} \Big(\sum_{i,\alpha}\big|\sum_{j,\beta} B^{i\,j}_{\alpha,\beta}(u^\eps)\del_\beta u^{\eps,j}\big|^2\Big)^{1/2}dx\\
&\le \frac{\eps}{2\mu} \int_{\To^d} \sum_{i,\alpha}|\del_\alpha G^i(\bu)\big|^2\,dx+\eps\frac{\mu}{2}  \int_{\To^d} \sum_{i,\alpha}\big|\sum_{j,\beta} B^{i\,j}_{\alpha,\beta}(u^\eps)\del_\beta u^{\eps,j}\big|^2 dx\\
&\le \frac{C\,\eps }{2\mu}\int_{\To^d} |\nabla\bu|^2dx+\frac{\eps}{2} \int_{\To^d} \sum_{i,j}\sum_{\alpha,\,\beta}\del_\alpha G^i(u^\eps) B_{\alpha\beta}^{ij}(u^\eps)\del_\beta u^{\eps,j} dx
\end{aligned}
\end{equation}
and hence the last term of~\eqref{conveqnrelen-2} is controlled by dissipation. By bound \eqref{lemform2} in Lemma \ref{lemuseful} on $F_\alpha(u^\eps|\bu)$ and~\eqref{thm2.9estdiss}, we deduce the estimation
$$
\begin{aligned}
\frac{d \varphi^\eps}{dt} +\frac{\eps}{2}\int_{\To^d}\big(\nabla G(u^\eps)^TB_{\alpha\,\beta}(u^\eps)\del_\beta u^\eps\big)\cdot\del_\alpha u^\eps\,dx
&\le  C  \int_{\To^d}   | F_\alpha (u^\eps | \bu ) | \, dx + \frac{\eps}{2\mu}C\int_{\To^d} |\nabla\bu|^2dx
\\
&\le C \int_{\To^d}  \eta(u^\eps  | \bu  )  \, dx  + \frac{\eps}{2\mu}C  \int_{\To^d}  | \nabla \bu |^2 dx .
\end{aligned}
$$
We now obtain the differential inequality
\begin{equation}
\frac{d \varphi^\eps}{dt} \le C \varphi^\eps (t)+ \frac{C\,\eps }{2\mu} \int_{\To^d}  | \nabla \bu |^2 dx \;,
\end{equation}
and then Gronwall's inequality gives \eqref{Sconvergence} and the proof is complete. Note that $C$ in \eqref{Sconvergence} depends also on the positive constant $\mu$ that is present in hypothesis~\eqref{DH}.
\end{proof}

\begin{remark} \rm
The convergence obtained in Theorem \ref{thmconv} uses as ``metric" for measuring distance the relative entropy function.
While this is not a metric, it operates as a combination of norms when $\bu \in B_M$ takes values in a compact set, in view of Lemma \ref{lemvariant}.
Note that \eqref{Sconvergence}  even provides an $O(\eps)$ rate of convergence  when the limit $\bu$ is a smooth solution.  
We note that the measure-valued weak versus strong uniqueness Theorem \ref{thmweakstrong}
offers as a corollary such a convergence result but without a convergence rate. Finally, we note that there are alternative well developed techniques
for obtaining convergence results for the zero-viscosity limit to smooth solutions and in stronger norms - see for instance \cite[Ch V]{Kawashima84} - but the above proof is striking in its simplicity and generality.
\end{remark}

\section{Uniqueness of smooth solutions in the class of dissipative  measure-valued solutions}\label{S2.2.2}
In the sequel, we consider  measure valued solutions first to the system of conservation laws~\eqref{hypcl} 
in the presence of $L^p$ bounds for $1<p<\infty$ and then in the presence of source terms. The goal is to prove uniqueness of smooth solutions with the class of dissipative measure-valued solutions.
For this purpose, we impose a set of growth restrictions on the constitutive functions of the problem:
It is assumed that the entropy $\eta(u)$ has the growth behavior
\begin{equation}\label{hypetap}
\beta_1 (|u|^p+1)-B\le \eta(u)\le \beta_2(|u|^p+1) \qquad \text{for } u\in\mathbb{R}^n
\tag{A$_1$}
\end{equation}
for some positive constants $\beta_1$, $\beta_2$, $B$  and for some $p \in (1,\infty)$. Moreover, that the
functions $F_\alpha$ and $A$ in~\eqref{hypcl} satisfy the growth restrictions
\begin{equation}\label{hypfgrowth}
\frac{|F_\alpha(u)|}{\eta(u)}=o(1)\qquad \text{as } |u|\to\infty,\quad\alpha=1,\dots,d \, ,
\tag{A$_2$}
\end{equation}
\begin{equation}\label{hypAgrowth}
\frac{|A(u)|}{\eta(u)}=o(1)\qquad \text{as } |u|\to\infty \, .
\tag{A$_3$}
\end{equation}

In the appendix, we adapt an idea from \cite{dst12} which leads to useful bounds for the relative entropy and the relative stress when
$\bu$ is restricted to take values in $B_M$. These bounds are used to establish the weak--strong uniqueness theorems in the following two subsections, for systems of conservation laws and balance laws respectively.

To avoid technicalities, we work for the spatially periodic case with domain $\To^d =(\R/2\pi\Z)^d$. 
Set $Q_T= \To^d \times[0,T)$ for $T\in(0,\infty)$ and $\overline{Q_T}= \To^d \times[0,T]$.
\subsection{Systems of hyperbolic conservation laws}
The study of existence of measure valued solutions typically involves the construction of a sequence of solutions  $u^\eps$ of an approximating problem
\begin{equation}
\label{hypcleps}
\begin{aligned}
\del_t A(u^\eps) + \del_\alpha F_\alpha (u^\eps) = \mathcal{P}_\eps,
\end{aligned}
\end{equation}
with $\mathcal{P}_\eps\to0$ in distributions as $\eps\to0+$ satisfying an entropy inequality
\begin{equation}
\label{entropyhypeps}
\begin{aligned}
\del_t \eta(u^\eps) + \del_\alpha q_\alpha (u^\eps) \le \mathcal{Q}_\eps,
\end{aligned}
\end{equation}
again with $\mathcal{Q}_\eps\to0$ in distributions.
A typical example of approximation is provided by the hyperbolic-parabolic system \eqref{hyppar} studied in section \ref{secrelenhyppar}; there
the entropy inequality \eqref{entropyhypeps} results from the identity \eqref{hypparaen} via hypothesis \eqref{hyp1}. It yields uniform bounds for
approximate solutions of the form
\begin{equation}
\label{bound}
\sup_{t \in [0,T]} \int \eta (u^\eps) dx \le M +  \int \eta (u_0^\eps) dx 
\end{equation}
for some $M$ depending possibly on $T$ but independent of $\eps$.

We work under a framework  of growth hypotheses \eqref{hypetap}, \eqref{hypfgrowth} and \eqref{hypAgrowth}, and then \eqref{bound} implies
a framework of uniform $L^p$ bounds. Recall that $\eta (u) = H (A(u))$. We assume that $H(v)$ is convex and positive. We also postulate the 
growth hypothesis 
\begin{equation}\label{S2.2 A4}
\frac{1}{C}( |A(u)|^q + 1) - B \le H(A(u))\le C( |A(u)|^q + 1) ,\qquad q>1,
\tag{A$_4$}
\end{equation}
for some uniform constant $C>0$ and $B > 0$, which amounts  to control on the growth of $v = A(u)$.

Consider the sequence of approximate solutions $\{u^\eps \}$ and $\{ v^\eps \}$, where $v^\eps = A(u^\eps)$, and introduce the associated Young measures
$\boldsymbol{\nu}_{(x,t)}$ and $\boldsymbol{N}_{(x,t)}$, respectively. More precisely, 
$\{u^\eps\}$ is assumed to be a sequence of Lebesgue measurable functions with a convergent subsequence (again called $u^\eps$) 
associated Young measure  $\boldsymbol{\nu}_{(x,t)}$, which is a weak$^*$ measurable family of Radon probability measures. 
At the same time, consider the sequence $\{v^\eps\}$, $v^\eps = A(u^\eps)$,  with the associated Young measure denoted by
$\boldsymbol{N}_{(x,t)}$. We next quote from~\cite[Appendix A]{dst12} what is needed to know about the Young measure description of oscillations and concentrations in weakly convergent sequences of functions $v^\eps$ defined on $\overline{Q_T}$ in the $L^q$ framework, $1<q<\infty$ in which  the development of concentrations in $H(v^\eps)$ is permitted. Using the Young measure $\boldsymbol{N}=({\boldsymbol{N}}_{(x,t)})_{(x,t)\in\overline{Q_T}}$ associated to the family $\{v^\eps\}$ we have
\begin{equation}
g(v^\eps)\rightharpoonup\langle{\boldsymbol{N}}_{x,t},g(\rho)\rangle
\end{equation}
for all continuous functions $g$ such that $\displaystyle\lim_{|\rho|\to\infty}\dfrac{g(\rho)}{1+|\rho|^q}=0$. 
Similarly, 
\begin{equation}
f(u^\eps)\rightharpoonup\langle{\boldsymbol{\nu}}_{x,t},f(\lambda)\rangle
\end{equation}
for all continuous functions $f$ such that $\displaystyle\lim_{|\lambda|\to\infty}\dfrac{f(\lambda)}{1+|\lambda|^p}=0$.
Since $v^\eps=A(u^\eps)$, the two measures are connected via
\begin{equation}\label{nu-N}
\langle{\boldsymbol{\nu}}_{x,t},g(A(\lambda))\rangle=\langle{\boldsymbol{\nu}}_{x,t},f(\lambda)\rangle=\langle{\boldsymbol{N}}_{x,t},g(\rho)\rangle
\end{equation}
whenever $f=g\circ A$, in other words ${\boldsymbol{N}}\circ A={\boldsymbol{\nu}}$.

Next, we employ the analysis in~\cite[Appendix A]{dst12}, which indicates that,  if the function $H(v)$ is convex and positive,  
the oscillations and concentrations can be represented via
\begin{equation}
H(v^\eps) dxdt\rightharpoonup\langle{\boldsymbol{N}}_{x,t},H\rangle dxdt+\boldsymbol{\gamma}(dxdt)
\end{equation}
as $\eps\to0+$. It is worth recalling that $\langle{\boldsymbol{N}}_{x,t},H\rangle$, which cannot be defined via the Young measure theorem due to its growth, is
instead defined via the limiting process 
\begin{equation}
\langle{\boldsymbol{N}}_{x,t},H(\rho)\rangle\doteq\lim_{R\to\infty}\langle{\boldsymbol{N}}_{x,t},H(\rho)\cdot 1_{H(\rho)<R}+R\cdot1_{H(\rho)\ge R}\rangle
\end{equation}
and the monotone convergence theorem,  and that $\boldsymbol{\gamma}$ is the \emph{concentration measure}, a non-negative Radon measure defined by
\begin{equation}\label{gamma1}
\boldsymbol{\gamma}\doteq\text{wk}^*-\lim_{\eps\to0+}(H(v^\eps)- \langle{\boldsymbol{N}}_{x,t},H\rangle  )\in\mathcal{M}^+(\overline{Q_T}).
\end{equation}
In addition,  for the initial data $\{v_0^\eps\}$ of the approximating problem, we assume weak convergence in $L^q$ with associated Young measure $\boldsymbol{N}_0$ and again possible development of concentrations described by the concentration measure $\boldsymbol{\gamma}_0(dx)\ge 0$, i.e.
\begin{equation}
g(v_0^\eps)\rightharpoonup\langle{\boldsymbol{N}_0}_{x},g(\rho)\rangle\qquad\forall g\text{ continuous s.t. }\lim_{|\rho|\to\infty}\dfrac{g(\rho)}{H(\rho)}=0
\end{equation}
and
\begin{equation}
H(v_0^\eps) dx\rightharpoonup\langle{\boldsymbol{N}_0}_{x}, H\rangle dx+\boldsymbol{\gamma}_0(dx).
\end{equation}

From Section~\ref{sechypconv}, we recall that the convexity assumption~\eqref{hypvucprime} for $H(v)$ translates to Hypothesis~\eqref{hyppd} for $\eta(u)$ via the relation $\eta=H\circ A$. Hence, by~\eqref{nu-N}-\eqref{gamma1}, we have
\begin{equation}
\eta(u^\eps) dxdt\rightharpoonup\langle{\boldsymbol{\nu}}_{x,t},\eta\rangle dxdt+\boldsymbol{\gamma}(dxdt)
\end{equation}
as $\eps\to0+$, 
and
\begin{equation}
\langle{\boldsymbol{\nu}}_{x,t},\eta(\lambda)\rangle\doteq \lim_{R\to\infty}\langle{\boldsymbol{N}}_{x,t},H(A(\lambda))\cdot 1_{H(A(\lambda))<R}+R\cdot 1_{H(A(\lambda))\ge R}\rangle
\end{equation}
where $\boldsymbol{\gamma}$ is the same \emph{concentration measure} given in~\eqref{gamma1} that can also be expressed as
\begin{equation}
\boldsymbol{\gamma}=\text{wk}^*-\lim_{\eps\to0+}(\eta(u^\eps)- \langle{\boldsymbol{\nu}}_{x,t},\eta\rangle  )\in\mathcal{M}^+(\overline{Q_T}).
\end{equation}

Now we state the definition of \emph{dissipative measure-valued solutions}, which form a sub-class of the measure valued solutions and  satisfy an averaged and integrated form of the entropy inequality that allows for concentration effects in the $L^p$ framework $p<\infty$.

\begin{definition}\label{defdissi}
A dissipative measure valued solution $(u,\boldsymbol{\nu},\boldsymbol{\gamma})$ with concentration to~\eqref{hypcl} consists of $u\in L^\infty(L^p)$, a Young measure $\boldsymbol{\nu}=({\boldsymbol{\nu}}_{x,t})_{\{(x,t)\in \bar{Q}_T\}}$ and a non-negative Radon measure $\boldsymbol{\gamma}\in\mathcal{M}^+(Q_T)$ such that
$u (x,t)  = \langle {\boldsymbol{\nu}}_{(x,t)} , \lambda \rangle$ and
\begin{equation}\label{dfmv1}
\iint\langle {\boldsymbol{\nu}}_{x,t}, A_i(\lambda)\rangle \del_t\varphi_{i}\,dx\,dt+\iint \langle {\boldsymbol{\nu}}_{x,t}, F_{i,\alpha}(\lambda)\rangle \del_\alpha\varphi_{i}dx\,dt+\int \langle\boldsymbol{\nu}_0,A_i\rangle\varphi_i(x,0)dx=0\qquad i=1,\dots,n,
\end{equation}
for any $\varphi\in C^1_c(Q\times[0,T))$ and
\begin{equation}\label{dfmv2}
\iint\frac{d\xi}{dt} \left[\langle {\boldsymbol{\nu}}_{x,t}, \eta(\lambda)\rangle dxdt+\boldsymbol{\gamma}(dxdt)    \right]+\int \xi(0)\left[\langle {\boldsymbol{\nu}_0}_{x}, \eta\rangle dx+\boldsymbol{\gamma}_0(dx)  \right]\ge 0,
\end{equation}
 for all $\xi=\xi(t)\in C^1_c([0,T))$ with $\xi\ge 0$.
\end{definition}

The following theorem establishes the recovery of \emph{classical} solutions from \emph{dissipative measure--valued} solutions. In other words, this theorem 
states a weak measure-valued versus strong uniqueness theorem in the $L^p$ framework for $1<p<\infty$:

\begin{theorem}
\label{thmweakstrong}
Suppose that \eqref{hypns}--\eqref{hyppd} hold,  the growth properties~\eqref{hypetap}--\eqref{S2.2 A4} are satisfied,  and the entropy  $\eta(u) \ge 0$. 
Assume that $(u,\boldsymbol{\nu}, \boldsymbol{\gamma})$ is a dissipative measure--valued solution,  $u = \langle \boldsymbol{\nu}_{x,t} , \lambda \rangle$,  and  $\bar{u}\in W^{1,\infty}(\overline{Q_T})$ 
is a strong solution to~\eqref{hypcl}. Then, if the  initial data 
satisfy $\boldsymbol{\gamma}_0=0$ and ${\boldsymbol{\nu}_0}_x=\delta_{\bu_0}(x)$, it holds $\boldsymbol{\nu}=\delta_{\bar{u}}$ and $u=\bar{u}$ almost everywhere on $Q_T$.
\end{theorem}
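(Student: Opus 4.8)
The plan is to run the relative-entropy computation of Section~\ref{secrelen} at the level of the Young measure $\boldsymbol{\nu}$, carrying the concentration measure $\boldsymbol{\gamma}$ along. The natural object to monitor is the averaged relative entropy, whose oscillation part is
\[
\int_{\To^d}\langle\boldsymbol{\nu}_{x,t},\eta(\lambda\,|\,\bu(x,t))\rangle\,dx,
\]
with $\eta(\lambda\,|\,\bu)$ the integrand of \eqref{defrelen}, to which the nonnegative concentration $\boldsymbol{\gamma}$ is adjoined. The structural observation that makes the whole argument work, and where the growth hypotheses \eqref{hypfgrowth} and \eqref{hypAgrowth} enter, is that only the pure entropy term $\eta(\lambda)$ develops concentrations in the $L^p$ framework: the quantities $\langle\boldsymbol{\nu}_{x,t},A(\lambda)\rangle$ and $\langle\boldsymbol{\nu}_{x,t},F_\alpha(\lambda)\rangle$ are genuine Young-measure averages because $A$ and $F_\alpha$ grow subcritically relative to $\eta$. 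Hence $\boldsymbol{\gamma}$ appears in the relative entropy only through the entropy term, and with a favorable (positive) sign.

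Next I would derive the measure-valued relative entropy inequality. Testing the $A$-conservation law \eqref{dfmv1} against the $G(\bu)$-type multipliers as in \eqref{diffhyp}, using the entropy inequality \eqref{dfmv2} with a nonnegative $\xi(t)\in C^1_c([0,T))$ that approximates $\mathbf{1}_{[0,\tau)}$, and subtracting the strong-solution identity \eqref{addcl} for $\bu$, one reproduces, now averaged against $\boldsymbol{\nu}$, the chain \eqref{diffhyp}--\eqref{relenidenhyp}. Since $\boldsymbol{\gamma}$ is a space-time measure it cannot be sliced in time, so this step must be carried out in the weak form modelled on \eqref{dfmv2}; its outcome is, formally, writing $I(t)$ for the total (oscillation plus concentration) relative entropy,
\[
\frac{d}{dt}I(t)\le -\int_{\To^d}\nabla G(\bu)\,(\del_\alpha\bu)\cdot\langle\boldsymbol{\nu}_{x,t},F_\alpha(\lambda\,|\,\bu)\rangle\,dx,
\]
where $F_\alpha(\lambda\,|\,\bu)$ is the relative flux \eqref{relflux}. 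Crucially the concentration has dropped out of the right-hand side because the relative flux does not concentrate.

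Because $\bu\in W^{1,\infty}(\overline{Q_T})$ takes values in $B_M$, the factor $\nabla G(\bu)\,\del_\alpha\bu$ is bounded, and the pointwise appendix estimate of Lemma~\ref{lemuseful} on $F_\alpha(\lambda\,|\,\bu)$ together with the coercivity of Lemma~\ref{lemvariant} yields
\[
\frac{d}{dt}I(t)\le C\int_{\To^d}\langle\boldsymbol{\nu}_{x,t},\eta(\lambda\,|\,\bu)\rangle\,dx\le C\,I(t),
\]
the last inequality using $\boldsymbol{\gamma}\ge0$. The hypotheses $\boldsymbol{\gamma}_0=0$ and ${\boldsymbol{\nu}_0}_x=\delta_{\bu_0}$ give $I(0)=0$, so Gronwall's lemma forces $I(t)=0$ for a.e.\ $t$. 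Finally, $\eta(\lambda\,|\,\bu)=H(A(\lambda))-H(A(\bu))-\nabla H(A(\bu))\cdot(A(\lambda)-A(\bu))\ge0$ by the strict convexity \eqref{hypvucprime} of $H$, vanishing only when $A(\lambda)=A(\bu)$, i.e.\ $\lambda=\bu$ by the invertibility \eqref{hypns}; hence the vanishing of $I$ forces $\boldsymbol{\nu}_{x,t}=\delta_{\bu(x,t)}$ and then $\boldsymbol{\gamma}=0$, so that $u=\langle\boldsymbol{\nu}_{x,t},\lambda\rangle=\bu$ almost everywhere.

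I expect the main obstacle to be the rigorous bookkeeping of the concentration measure $\boldsymbol{\gamma}$. Because it lives on $\overline{Q_T}$ and admits no naive time restriction, the relative entropy inequality must be obtained entirely in weak test-function form, and one must verify carefully that $\boldsymbol{\gamma}$ enters the left-hand side with a favorable sign while being completely absent from the relative-flux term on the right; this is precisely what the subcritical growth \eqref{hypfgrowth} and \eqref{hypAgrowth} of $F_\alpha$ and $A$ guarantee. A secondary technical point is justifying that products of smooth functions of $\bu$ with the weakly converging sequences pass to the limit without spawning new concentrations, so that the averaged version of \eqref{relenidenhyp} is legitimate in the $L^p$ setting.
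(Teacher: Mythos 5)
Your proposal follows essentially the same route as the paper: the averaged relative entropy $\mathcal{H}(\boldsymbol{\nu},u,\bu)=\langle\boldsymbol{\nu},\eta(\lambda|\bu)\rangle$ with the concentration adjoined, the weak-form derivation using $\xi(\tau)G(\bu)$ as test function and a nonincreasing approximation of $\mathbf{1}_{[0,t)}$, the observation that the subcritical growth of $A$ and $F_\alpha$ keeps $\boldsymbol{\gamma}$ out of the relative-flux term, the bound $|F_\alpha(\lambda|\bu)|\le C\,\eta(\lambda|\bu)$ from Lemma~\ref{lemuseful}, and Gronwall. The only small inaccuracy is your final claim that $\boldsymbol{\gamma}=0$: the Gronwall inequality one actually obtains controls only the oscillation part $\int\mathcal{H}\,dx$ (the $\boldsymbol{\gamma}$-term is discarded on the left via the sign of $d\xi_n/d\tau$), which suffices for $\boldsymbol{\nu}=\delta_{\bu}$ and $u=\bu$ but does not by itself show the concentration vanishes.
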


\begin{proof}
Let $K\subset\mathbb{R}^n$ be a compact set containing the values of the strong solution $\bar{u}(x,t)$ for $(x,t)\in Q_T$.
First we define the averaged quantities
\begin{equation}\label{dfH}
\mathcal{H}(\boldsymbol{\nu},u,\bar{u})\doteq\langle \boldsymbol{\nu},\eta\rangle-\eta(\bar{u})-G(\bar{u})\cdot \big ( \langle\boldsymbol{\nu},A\rangle-A(\bar{u}) \big )
\end{equation}
\begin{equation}
\label{dfZ}
Z_\alpha(\boldsymbol{\nu},u,\bar{u})\doteq\langle \boldsymbol{\nu},F_\alpha\rangle-F_\alpha(\bar{u})-\nabla F_\alpha(\bar{u})\nabla A(\bar{u})^{-1}(\langle\boldsymbol{\nu},A\rangle-A(\bar{u}))\;.
\end{equation}
It is easy to check that
\begin{equation}
\mathcal{H}(\boldsymbol{\nu},u,\bar{u})=\int \eta(\lambda|\bar{u})d\boldsymbol{\nu}(\lambda)
\end{equation}
using~\eqref{defrelfl}. As in~\eqref{diffhyp}
\begin{equation}\label{mvdiffhyp}
\del_t \Big ( G(\bu) \Big)\cdot (\langle\boldsymbol{\nu},A\rangle-A(\bu))+\del_{\alpha} \Big(G(\bu)\Big)\cdot(\langle\boldsymbol{\nu},F_\alpha\rangle-F_\alpha(\bu))=\nabla G(\bu)\bu_{x_\alpha}\cdot Z_\alpha(\boldsymbol{\nu},u,\bu)
\end{equation}
for $\alpha=1,\dots,d$. Since $\bar{u}\in W^{1,\infty}(\overline{Q_T})$ is a strong solution of~\eqref{hypcl} then it verifies the strong versions of~\eqref{dfmv1}--\eqref{dfmv2}, i.e.
\begin{equation}\label{dfcl1}
\iint A_i(\bar{u}) \varphi_{i,t}+F_{i,\alpha}(\bar{u})\varphi_{i,\alpha}dx\,dt+\int A_i(\overline{u}_0(x))\varphi_i(x,0)dx=0\qquad i=1,\dots, n
\end{equation}
and
\begin{equation}\label{dfcl2}
\iint\frac{d\xi}{dt}\eta(\bar{u})dx\,dt+\int  \xi (0)\eta(\overline{u}_0(x))dx=0
\end{equation}
for all test functions $\varphi\in C^1_c(Q\times[0,T))$, $\xi\in C^1_c([0,T))$ with $\xi \ge 0$.

Now choosing $\varphi(x,\tau)\doteq\xi(\tau) G(\bu(x,\tau))$ in~\eqref{dfmv1} and~\eqref{dfcl1}, subtracting~\eqref{dfcl1} from~\eqref{dfmv1} and combining with~\eqref{mvdiffhyp} we arrive at
\begin{equation}\label{intmvdiffhyp}
\begin{aligned}
\iint\frac{d\xi}{d\tau} G(\bar{u}) \cdot(\langle\boldsymbol{\nu}, A\rangle-A(\bar{u}))+\xi(\tau) \nabla G(\bar{u})\bar{u}_{x_\alpha} \cdot Z_\alpha(\boldsymbol{\nu},u,\bar{u})dxd\tau
\\
+\int \xi(0)G(\bar{u}_0) (\langle\boldsymbol{\nu}_0,A\rangle-A(\bar{u}_0))dx=0\;.
\end{aligned}
\end{equation}
Subtracting equations~\eqref{intmvdiffhyp} and~\eqref{dfcl2} from~\eqref{dfmv2} and using~\eqref{dfH},  we get
\begin{equation}\label{Hintineq}
\begin{aligned}
\iint\frac{d\xi}{d\tau} \, &\mathcal{H}(\boldsymbol{\nu},u,\bar{u}) dxd\tau  + \iint \frac{d\xi}{d\tau} \gamma (dx d\tau) 
\\
&\ge \iint \xi(\tau)\nabla G(\bar{u}) \bar{u}_{x_\alpha} Z_\alpha dxd\tau
\\
&-\int\xi(0)\left[(\langle\boldsymbol{\nu}_0,\eta\rangle-\eta(\bar{u}_0)-G(\bar{u}_0)\cdot (\langle\boldsymbol{\nu}_0,A\rangle-A(\bar{u}_0)))dx+\boldsymbol{\gamma}_0(dx)\right] \, ,
\end{aligned}
\end{equation}
for any $\xi \in C_c^1([0,T))$ with $\xi \ge 0$. We apply \eqref{Hintineq} to a sequence of smooth, monotone nonincreasing functions $\xi_n \ge 0$ that
approximate the Lipschitz function
\begin{equation}\label{S2.2.2xi}
\xi(\tau)\doteq\left\{
\begin{array}{lll}
1 & \text{if} & 0\le \tau<t\\
\frac{t-\tau}{\eps}+1 & \text{if} & t\le \tau<t+\eps\\
0 & \text{if} & \tau\ge t+\eps
\end{array}
\right. \; .
\end{equation}
Passing to the limit $n \to \infty$ and using that $\gamma \ge 0$, this leads to 
\begin{equation}
\begin{aligned}
-\frac{1}{\eps}\int_t^{t+\eps}\int \mathcal{H}(\boldsymbol{\nu},u,\bar{u}) dx\,d\tau &\ge \int_0^{t+\eps}\int \xi(\tau)\nabla G(\bar{u}) \bar{u}_{x_\alpha} \cdot Z_\alpha dxd\tau
\\
&-\int[\eta(u_0)-\eta(\bar{u}_0) -G(\bar{u}_0)\cdot (\langle\boldsymbol{\nu}_0,A\rangle-A(\bar{u}_0))]dx+\boldsymbol{\gamma}_0(dx)\;.
\end{aligned}
\end{equation}
Taking the limit as $\eps\to0+$, we arrive at
\begin{equation}\label{Hintineq2}
\begin{aligned}
\int \mathcal{H}(\boldsymbol{\nu},u,\bar{u})\,dx\le& C\int_0^t \int\max_{\alpha}|Z_\alpha |dx\,d\tau\\
&+\int [\langle\boldsymbol{\nu}_0,\eta\rangle-\eta(\bar{u}_0)-G(\bar{u}_0)\cdot (\langle\boldsymbol{\nu}_0,A\rangle-A(\bar{u}_0))]dx+\boldsymbol{\gamma}_0(dx)
\end{aligned}
\end{equation}
for $t\in(0,T)$. Note that $C=C(K,|\nabla\bar u| )$.

The rest of the proof is based on the estimate of Lemma \ref{lemuseful}.
Suppose that $K$ is contained in the ball $B_M$ centered at the origin and of radius $M$.
The  properties~\eqref{hypetap}--\eqref{hypAgrowth} allow to employ bound \eqref{lemform2}
and to  estimate \eqref{dfZ} in terms of \eqref{dfH} as follows:
\begin{equation}
\begin{aligned}
Z_\alpha(\boldsymbol{\nu},u,\bar{u})  &= 
 \langle \boldsymbol{\nu},F_\alpha\rangle-F_\alpha(\bar{u})-\nabla F_\alpha(\bar{u})\nabla A(\bar{u})^{-1}(\langle\boldsymbol{\nu},A\rangle-A(\bar{u}))
 \\
 &= \langle \boldsymbol{\nu} , F_\alpha (\lambda | \bu ) \rangle
 \\
 &\le  C_1 \langle \boldsymbol{\nu} , \eta (\lambda | \bu ) \rangle 
 \\
 &= C_1 \mathcal{H} ( \boldsymbol{\nu},u,\bar{u})
\end{aligned}
\end{equation}
Then \eqref{Hintineq2} takes the form
\begin{equation}
\int \mathcal{H} (\boldsymbol{\nu},u,\bar{u})dx\le C_1'\int_0^t\int \mathcal{H} (\boldsymbol{\nu},u,\bar{u})\,dxd\tau+\iint\eta(\lambda|\bu_0) d\boldsymbol{\nu}_0(\lambda)\,dx+\boldsymbol{\gamma}_0(dx)\;.
\end{equation}

For the initial data it is assumed that there are no concentrations, that is  $\boldsymbol{\gamma}_0=0$ . Applying Gronwall's inequality, we conclude that
\begin{equation}
\int \mathcal{H} (\boldsymbol{\nu},u,\bar{u})dx\le C_1''\iint \eta(\lambda|\bu_0)d{\boldsymbol{\nu}}_0(\lambda)dx\, e^{C_1' t}
\end{equation}
for some positive constants $C_1'$ and $C_1''$. The proof of the theorem follows.
\end{proof}

It immediately follows:

\begin{corollary}
Under the hypotheses of Theorem \ref{thmweakstrong}, let $u \in C([0,T] ; L^p (\To^d) )$, $p > 1$,  be an entropy weak solution of \eqref{hypcl}
satisfying \eqref{entropyhyp} and let $\bar{u}\in W^{1,\infty}(\overline{Q_T})$ be a strong solution to~\eqref{hypcl}. Then, if the initial data
$u_0 = \bu_0$ almost everywhere on $\To^d$,  then $u=\bar{u}$ almost everywhere on $Q_T$.
\end{corollary}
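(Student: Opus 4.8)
The plan is to exhibit the entropy weak solution $u$ as a particular dissipative measure-valued solution in the sense of Definition~\ref{defdissi}, and then to invoke Theorem~\ref{thmweakstrong}. To this end I would take the Young measure to be the Dirac mass carried by $u$ itself, namely $\boldsymbol{\nu}_{x,t}=\delta_{u(x,t)}$, and set the concentration measures to zero, $\boldsymbol{\gamma}=0$ and $\boldsymbol{\gamma}_0=0$. The family $(\delta_{u(x,t)})$ is a weak$^*$ measurable family of probability measures because $u$ is measurable, and the barycenter identity $u(x,t)=\langle\boldsymbol{\nu}_{x,t},\lambda\rangle$ holds trivially; moreover $u\in L^\infty(L^p)$ since $u\in C([0,T];L^p(\To^d))$, so the regularity requirement of Definition~\ref{defdissi} is met, and $\boldsymbol{\gamma}=0$ is a legitimate element of $\mathcal{M}^+(Q_T)$.

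Next I would verify that this triple satisfies the two defining relations. For the conservation law~\eqref{dfmv1}, evaluating the averages against the Dirac measure gives $\langle\boldsymbol{\nu}_{x,t},A_i(\lambda)\rangle=A_i(u)$ and $\langle\boldsymbol{\nu}_{x,t},F_{i,\alpha}(\lambda)\rangle=F_{i,\alpha}(u)$, so~\eqref{dfmv1} collapses to the standard weak formulation of~\eqref{hypcl} that $u$ satisfies by hypothesis, with initial trace $u_0$. The growth restrictions~\eqref{hypfgrowth} and~\eqref{hypAgrowth} together with the uniform $L^p$ bound guarantee that $A(u)$ and $F_\alpha(u)$ are integrable, so the weak formulation is meaningful. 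For the entropy inequality~\eqref{dfmv2}, I would test the distributional inequality~\eqref{entropyhyp} against the spatially constant nonnegative function $\psi(x,t)=\xi(t)$, with $\xi\in C^1_c([0,T))$, $\xi\ge 0$; the flux term integrates to zero on the torus and one is left precisely with
\[
\iint \xi'(t)\,\eta(u)\,dx\,dt+\int\xi(0)\,\eta(u_0)\,dx\ge 0,
\]
which is exactly~\eqref{dfmv2} once the Dirac averages $\langle\boldsymbol{\nu}_{x,t},\eta\rangle=\eta(u)$ and $\langle\boldsymbol{\nu}_0,\eta\rangle=\eta(u_0)$ are inserted and $\boldsymbol{\gamma}=\boldsymbol{\gamma}_0=0$ is used. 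The upper bound in~\eqref{hypetap} ensures $\eta(u)\in L^\infty(L^1)$, so all integrals converge.

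Finally, since $u_0=\bar u_0$ almost everywhere, the initial Young measure is $\boldsymbol{\nu}_{0,x}=\delta_{u_0(x)}=\delta_{\bar u_0(x)}$ and $\boldsymbol{\gamma}_0=0$, which is exactly the initial-data hypothesis of Theorem~\ref{thmweakstrong}. All the structural hypotheses~\eqref{hypns}--\eqref{hyppd}, the growth conditions~\eqref{hypetap}--\eqref{S2.2 A4} and the sign condition $\eta\ge 0$ are in force by assumption. Theorem~\ref{thmweakstrong} then yields $\boldsymbol{\nu}=\delta_{\bar u}$ and $u=\bar u$ almost everywhere on $Q_T$, which is the assertion.

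The main obstacle, and the only point requiring genuine care, is the passage from the distributional entropy inequality~\eqref{entropyhyp} to the integrated form~\eqref{dfmv2} \emph{including} the correct initial-data term $\int\xi(0)\,\eta(u_0)\,dx$: one must know that $\eta(u)$ attains the initial value $\eta(u_0)$ as a trace at $t=0$, since a plain interior distributional inequality carries no boundary term. This is precisely where the time-continuity $u\in C([0,T];L^p(\To^d))$ enters, identifying the initial trace of $u$ with $u_0$ and, through the continuity and controlled growth of $\eta$, that of $\eta(u)$ with $\eta(u_0)$; the remaining steps are the routine sign bookkeeping for the nonnegative test function $\xi$ and the elementary evaluation of the Dirac averages.
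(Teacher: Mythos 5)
Your proposal is correct and is exactly the argument the paper intends: the corollary is stated as an immediate consequence of Theorem \ref{thmweakstrong}, obtained by viewing the entropy weak solution as the dissipative measure-valued solution with $\boldsymbol{\nu}_{x,t}=\delta_{u(x,t)}$ and $\boldsymbol{\gamma}=\boldsymbol{\gamma}_0=0$. Your explicit attention to recovering the initial-data term in \eqref{dfmv2} from the time-continuity $u\in C([0,T];L^p(\To^d))$ is a sound (and welcome) elaboration of a step the paper leaves implicit.
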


A uniqueness theorem for strong solutions in the class of dissipative measure-valued solutions can also easily be established  in a framework of
 $L^\infty$ uniform bounds. 
In such a setting, no concentration effects are present, hence $\boldsymbol{\gamma}=0$ and $\boldsymbol{\gamma}_0=0$ in Definition~\ref{defdissi}. 
We state such a result  for the sake of completeness but omit the proof. We refer the reader to~\cite[Theorem 2.2]{dst12} and \cite{bds11} for details in the case $A(u)=u$.

\begin{theorem}
Let $\bu\in W^{1,\infty}(Q_T)$ be a strong solution and let $(u,\boldsymbol{\nu})$ be  a dissipative measure valued solution to~\eqref{hypcl} respectively. Assume that there exists a compact set $K \subset\R^n$ such that $\bu,u\in K$ for $(x,t)\in Q_T$ and that $\nu$ is also supported in $K$. Then there exist constants $c_1>0$ and $c_2>0$ such that
\begin{equation}
\iint|\lambda-\bu(t)|^2d\boldsymbol{\nu}(\lambda)dx\le c_1\left(\int|u_0-\bu_0|^2dx\right) e^{c_2 t}.
\end{equation}
Moreover, if the initial data agree $u_0=\bu_0$, then $\boldsymbol{\nu}=\delta_{\bu}$ and the dissipative measure valued solution is a strong solution, i.e. $u=\bu$ almost everywhere.
\end{theorem}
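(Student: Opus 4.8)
The plan is to reuse, essentially verbatim, the argument behind Theorem~\ref{thmweakstrong}, simplified by the two features special to the present $L^\infty$ setting: there are no concentrations, so $\boldsymbol{\gamma}=0$ and $\boldsymbol{\gamma}_0=0$, and every relevant quantity is evaluated on the fixed compact set $K$. First I would introduce, exactly as in~\eqref{dfH}--\eqref{dfZ}, the averaged relative entropy
$$\mathcal{H}(\boldsymbol{\nu},u,\bu)=\langle\boldsymbol{\nu},\eta\rangle-\eta(\bu)-G(\bu)\cdot(\langle\boldsymbol{\nu},A\rangle-A(\bu))=\int\eta(\lambda|\bu)\,d\boldsymbol{\nu}(\lambda),$$
together with $Z_\alpha(\boldsymbol{\nu},u,\bu)=\langle\boldsymbol{\nu},F_\alpha(\lambda|\bu)\rangle$. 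Because $\bu$ and the support of $\boldsymbol{\nu}$ lie in $K$, hypothesis~\eqref{hyppd} localizes to the uniform bound~\eqref{hyppdprime}, which together with the nonsingularity of $\nabla A$ from~\eqref{hypns} yields the two-sided equivalence
$$\frac{1}{C}\int|\lambda-\bu|^2\,d\boldsymbol{\nu}(\lambda)\le\mathcal{H}(\boldsymbol{\nu},u,\bu)\le C\int|\lambda-\bu|^2\,d\boldsymbol{\nu}(\lambda),$$
with $C=C(K)$. This is the $L^\infty$ substitute for the role played by the growth hypotheses~\eqref{hypetap}--\eqref{S2.2 A4} and Lemma~\ref{lemuseful} in the $L^p$ proof.

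Next I would derive the averaged relative-entropy inequality by the same manipulations as in the proof of Theorem~\ref{thmweakstrong}: insert the test function $\varphi=\xi\,G(\bu)$ into the measure-valued formulation~\eqref{dfmv1} and the strong formulation~\eqref{dfcl1}, subtract, combine with~\eqref{mvdiffhyp}, and subtract the result together with~\eqref{dfcl2} from the entropy inequality~\eqref{dfmv2}. Passing to the limit in the monotone cutoff~\eqref{S2.2.2xi} and using $\boldsymbol{\gamma}\ge 0$, the steps leading to~\eqref{Hintineq2} now give, since all concentration measures vanish,
$$\int\mathcal{H}(\boldsymbol{\nu},u,\bu)\,dx\le C\int_0^t\!\!\int\max_\alpha|Z_\alpha|\,dx\,d\tau+\int\langle\boldsymbol{\nu}_0,\eta(\lambda|\bu_0)\rangle\,dx,$$
where $C$ depends on $K$ and on the Lipschitz bound of $\bu$.

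The final step closes this by Gronwall. On the compact set $K$ the relative flux $F_\alpha(\lambda|\bu)$ of~\eqref{relflux} is a second-order Taylor remainder, so $|F_\alpha(\lambda|\bu)|\le C|\lambda-\bu|^2$ uniformly, whence $\max_\alpha|Z_\alpha|\le C\int|\lambda-\bu|^2\,d\boldsymbol{\nu}\le C\,\mathcal{H}(\boldsymbol{\nu},u,\bu)$ by the equivalence above. Using that in this no-concentration setting the initial Young measure is atomic, $\boldsymbol{\nu}_0=\delta_{u_0}$, the initial term equals $\int\eta(u_0|\bu_0)\,dx\le C\int|u_0-\bu_0|^2\,dx$. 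Gronwall's lemma then produces
$$\int\mathcal{H}(\boldsymbol{\nu},u,\bu)\,dx\le c_1\,e^{c_2 t}\int|u_0-\bu_0|^2\,dx,$$
and a further use of the lower equivalence rewrites the left side as $\iint|\lambda-\bu(t)|^2\,d\boldsymbol{\nu}(\lambda)\,dx$, giving the stated estimate. For the ``moreover'' part, $u_0=\bu_0$ makes the right-hand side vanish, forcing $\int|\lambda-\bu(t)|^2\,d\boldsymbol{\nu}(\lambda)=0$ for a.e.\ $(x,t)$, i.e.\ $\boldsymbol{\nu}_{x,t}=\delta_{\bu(x,t)}$ and hence $u=\langle\boldsymbol{\nu},\lambda\rangle=\bu$ almost everywhere.

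The obstacle here is conceptual rather than computational, since the compactness of $K$ trivializes the estimates that were delicate in Theorem~\ref{thmweakstrong}. The one point requiring genuine care is the initial-data reduction: the estimate is stated with $\int|u_0-\bu_0|^2\,dx$ on the right, and this form is available only because $\boldsymbol{\nu}_0=\delta_{u_0}$ is atomic. Without atomicity of $\boldsymbol{\nu}_0$, Jensen's inequality shows the initial variance $\int\langle\boldsymbol{\nu}_0,|\lambda-\bu_0|^2\rangle\,dx$ can only be bounded below by $\int|u_0-\bu_0|^2\,dx$, so the initial relative entropy could not be controlled by the bias alone and the stated inequality would fail; this is precisely why the theorem is phrased in the $L^\infty$ framework where no initial oscillations are present.
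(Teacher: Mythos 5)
Your proposal is correct and follows essentially the same route the paper intends: the paper omits the proof, pointing to the argument of Theorem~\ref{thmweakstrong} specialized to the $L^\infty$ setting (no concentration measures, two-sided equivalence of $\mathcal{H}$ with the variance $\int|\lambda-\bu|^2\,d\boldsymbol{\nu}$ on the compact set $K$, quadratic bound on $Z_\alpha$, and Gronwall), which is exactly what you carry out. Your closing remark about needing atomic initial data $\boldsymbol{\nu}_0=\delta_{u_0}$ to land on $\int|u_0-\bu_0|^2\,dx$ is a correct and worthwhile observation that the paper's statement leaves implicit.
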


\subsection{Systems of hyperbolic balance laws}
\label{secrelenbal}
Consider next the system of balance laws:
\begin{equation}
\label{hypbl1}
\del_t A(u) + \del_\alpha F_\alpha (u) = P(u) \,,
\end{equation}
and the entropy condition
\begin{equation}
\label{hypbl2}
\del_t \eta(u) + \del_\alpha q_\alpha (u) \le G(u) \cdot P(u)\,,
\end{equation}
where $P:\R^n\to\R^n$ is a smooth function called production. Let $u$ be a weak solution of \eqref{hypbl1} that satisfies the weak form of
the entropy inequality \eqref{hypbl2}. Here,  $\eta - q_\alpha$ are an entropy-entropy flux
pair satisfying the hypotheses in Section \ref{sechyphyp}  and  $G(u)$ is the multiplier in \eqref{hypep}.
We wish to compare the entropy weak solution $u$ to a strong conservative solution $\bu$ of
\begin{equation}
\label{hypblstrong}
\begin{aligned}
\del_t A(\bu) + \del_\alpha F_\alpha (\bu) &= P(\bu) 
\\
\del_t \eta(\bu) + \del_\alpha q_\alpha (\bu) &= G(\bu) \cdot P(\bu)\;,
\end{aligned}
\end{equation}
using again the relative entropy function \eqref{defrelen} and a computation in the spirit of Section \ref{secrelen}.

This is accomplished as follows.
The formula for the entropy dissipation of the difference of the two solutions 
$$
\del_t (\eta(u) - \eta(\bu)) + \del_\alpha ( q_\alpha (u) - q_\alpha (\bu) ) \le G(u) \cdot P(u) - G(\bu) \cdot P(\bu)
$$
is combined with the analog of \eqref{diffhyp},  which  in the present case takes the form
$$
\begin{aligned}
&\del_t \Big ( G(\bu) \cdot (A(u) - A(\bu) ) \Big ) + \del_\alpha \Big  ( G(\bu) \cdot ( F_\alpha (u) - F_\alpha (\bu) ) \Big )
\\
&\qquad=   \nabla G(\bu) \del_{x_\alpha} \bu  \cdot  F_\alpha (u | \bu) + P(\bu) \cdot \nabla G(\bu) \nabla A (\bu)^{-1}  \big(A(u) - A(\bu) \big)
+ G(\bu) \cdot (P(u) - P(\bu))
\end{aligned}
$$
Combining the two formulas leads to the relative entropy identity
\begin{equation}
\label{relenbal}
\del_t  \eta (u | \bu)  + \del_{\alpha} q_\alpha (u | \bu) \le  -   \del_\alpha G (\bu)  \cdot F_\alpha (u | \bu) + P(\bu) \cdot G(u | \bu) 
+ ( G(u) - G(\bu) ) \cdot (P(u) - P(\bu) )\;,
\end{equation}
where $F_\alpha (u | \bu)$ is defined in \eqref{relflux} while $G(u| \bu)$ in~\eqref{relmult}.

The derivation of formula \eqref{relenbal} as presented here is formal, but it may be made rigorous by standard arguments and even 
be performed between a dissipative measure-valued solution $u$ and a strong solution $\bu$ using the process outlined in Theorem \ref{thmweakstrong}.
One may easily extend Theorem~\ref{thmweakstrong} to hold for the case of dissipative measure-valued solutions of a balance law  \eqref{hypbl1}--\eqref{hypbl2} if
it is assumed that the vector field $P(u)$ is weakly dissipative, 
meaning that $P(u)$ satisfies the  hypothesis
\begin{equation}
\label{hypdissvf}
( G(u) - G(\bu) ) \cdot (P(u) - P(\bu) ) \le 0  \qquad \forall \, u, \bu \in \R^n 
\tag{H$_d$}
\end{equation}
and a growth condition 
\begin{equation}
\label{Ggrowth}
\frac{|G(u)|}{\eta(u)}=o(1)\qquad \text{as } |u|\to\infty\;.
\tag{A$_G$}
\end{equation}
Hypothesis~\eqref{hypdissvf} has been proposed in a context of relaxation balance laws in \cite{MT14}
and can be easily verified  for the  example  of friction in gas dynamics studied in \cite{lt13}. 
Hypothesis~\eqref{Ggrowth} will lead via an argument as in \eqref{lemform2} to the bound $G(u|\bu) \le C \eta (u | \bu)$.

Hypothesis~\eqref{hypdissvf} of a weakly dissipative field is not critical  for a weak-strong uniqueness theorem
and might be replaced by hypotheses allowing moderate growth for $P(u)$.  An inspection of the proof of Theorem \ref{thmweakstrong} 
indicates that it suffices to require bounds guaranteeing that
\begin{equation}\label{hypdissvf2}
 |( G(u) - G(\bu) ) \cdot (P(u) - P(\bu) )  | \le C \,  \eta (u | \bu)   \, .
\end{equation}
Estimate~\eqref{hypdissvf2} can be derived from~\eqref{Ggrowth} together with the additional growth conditions:
\begin{equation}\label{PGgrowth}
\frac{|G(u) \cdot P(u) |}{\eta (u)}=o(1)\quad\text{and}\quad \frac{|P(u) |}{\eta (u)}=o(1)\qquad\text{as } |u|\to\infty \, ,
\tag{A$_P$}
\end{equation}
following similar analysis as in the proof of~\eqref{lemform2} in Lemma~\ref{lemuseful}.
In view of the above analysis, a proposition on uniqueness of dissipative measure-valued versus strong solutions to balance laws~\eqref{hypbl1}--\eqref{hypbl2} as in Theorem~\ref{thmweakstrong} can be stated:
\begin{theorem}
\label{thmweakstrongbal}
Suppose that \eqref{hypns}--\eqref{hyppd} hold,  the growth properties~\eqref{hypetap}--\eqref{S2.2 A4} are satisfied,  and the entropy  $\eta(u) \ge 0$. Moreover, assume that
either (i) hypothesis~\eqref{hypdissvf} and~ \eqref{Ggrowth} hold true or, alternatively,  (ii) that  both growth conditions~\eqref{Ggrowth} and~\eqref{PGgrowth} are satisfied.
Let $(u,\boldsymbol{\nu}, \boldsymbol{\gamma})$ be a dissipative measure--valued solution,  with $u = \langle \boldsymbol{\nu}_{x,t} , \lambda \rangle$,  and  let 
$\bar{u}\in W^{1,\infty}(\overline{Q_T})$ 
be a strong solution to~\eqref{hypbl1}--\eqref{hypbl2}, respectively. Then, if the  initial data 
satisfy $\boldsymbol{\gamma}_0=0$ and ${\boldsymbol{\nu}_0}_x=\delta_{\bu_0}(x)$, it holds $\boldsymbol{\nu}=\delta_{\bar{u}}$ and $u=\bar{u}$ almost everywhere on $Q_T$.
\end{theorem}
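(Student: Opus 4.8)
The plan is to mirror the proof of Theorem~\ref{thmweakstrong}, carrying along the extra production terms that distinguish the balance-law relative entropy identity~\eqref{relenbal} from its conservation-law counterpart. First I would fix a compact set $K$ containing the range of $\bu$ and introduce the averaged quantities $\mathcal{H}(\boldsymbol{\nu},u,\bu)$ and $Z_\alpha(\boldsymbol{\nu},u,\bu)$ exactly as in~\eqref{dfH}--\eqref{dfZ}, so that $\mathcal{H}=\langle\boldsymbol{\nu},\eta(\lambda|\bu)\rangle$ and $Z_\alpha=\langle\boldsymbol{\nu},F_\alpha(\lambda|\bu)\rangle$. I would then record the appropriate notion of dissipative measure-valued solution to~\eqref{hypbl1}--\eqref{hypbl2}: the weak form of the balance law now carries $\langle\boldsymbol{\nu},P(\lambda)\rangle$ on its right-hand side, while the averaged entropy inequality extends~\eqref{dfmv2} by the source contribution $\langle\boldsymbol{\nu},G(\lambda)\cdot P(\lambda)\rangle$, which is well defined as a genuine (non-concentrating) integrand precisely because of the growth control in~\eqref{PGgrowth}.

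Next I would reproduce the computation~\eqref{mvdiffhyp} starting instead from the strong balance-law identities~\eqref{hypblstrong}. Following the formal derivation that produced~\eqref{relenbal}, this generates two terms beyond the conservation-law case: the multiplier term $P(\bu)\cdot\langle\boldsymbol{\nu},G(\lambda|\bu)\rangle$, with $G(\lambda|\bu)$ as in~\eqref{relmult}, and the dissipation term $\langle\boldsymbol{\nu},(G(\lambda)-G(\bu))\cdot(P(\lambda)-P(\bu))\rangle$. Choosing the test function $\varphi(x,\tau)=\xi(\tau)G(\bu(x,\tau))$ and subtracting the strong-solution identities from the measure-valued ones, exactly as in the passage from~\eqref{dfmv1}--\eqref{dfmv2} to~\eqref{Hintineq}, I would then pass to the limit along the monotone nonincreasing approximations~\eqref{S2.2.2xi} of the cutoff. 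Since $\xi$ is nonincreasing and $\boldsymbol{\gamma},\boldsymbol{\gamma}_0\ge0$, the concentration terms drop with the favorable sign, delivering the balance-law analog of~\eqref{Hintineq2},
\[
\int\mathcal{H}\,dx\le\int_0^t\!\!\int\Big(-\del_\alpha G(\bu)\cdot Z_\alpha+P(\bu)\cdot\langle\boldsymbol{\nu},G(\lambda|\bu)\rangle+\langle\boldsymbol{\nu},(G(\lambda)-G(\bu))\cdot(P(\lambda)-P(\bu))\rangle\Big)dx\,d\tau,
\]
the initial contribution vanishing under $\boldsymbol{\gamma}_0=0$ and $\boldsymbol{\nu}_{0,x}=\delta_{\bu_0}$.

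The remaining task is to absorb the three integrands into $\mathcal{H}$ and close with Gronwall. The $Z_\alpha$ term is handled as in Theorem~\ref{thmweakstrong}: boundedness of $\del_\alpha G(\bu)$ on $K$ and bound~\eqref{lemform2} of Lemma~\ref{lemuseful} give $|\del_\alpha G(\bu)\cdot Z_\alpha|\le C\mathcal{H}$. For the multiplier term, the growth hypothesis~\eqref{Ggrowth}, run through the argument indicated after~\eqref{Ggrowth} and parallel to the proof of~\eqref{lemform2}, yields $\langle\boldsymbol{\nu},G(\lambda|\bu)\rangle\le C\mathcal{H}$, and $|P(\bu)|$ is bounded on $K$. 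For the dissipation term there are two cases: under alternative (i), hypothesis~\eqref{hypdissvf} makes the integrand pointwise nonpositive, so upon averaging it is simply discarded; under alternative (ii), the combination of~\eqref{Ggrowth} and~\eqref{PGgrowth} produces estimate~\eqref{hypdissvf2}, whence its absolute value is $\le C\mathcal{H}$. In either case $\int\mathcal{H}\,dx\le C\int_0^t\int\mathcal{H}\,dx\,d\tau$, and Gronwall forces $\int\mathcal{H}\,dx=0$; since~\eqref{hyppd} gives $\mathcal{H}\ge c\,\langle\boldsymbol{\nu},|\lambda-\bu|^2\rangle\ge0$ on the compact range of $\bu$, this yields $\boldsymbol{\nu}=\delta_{\bu}$ and $u=\bu$ almost everywhere.

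The main obstacle is justifying, within the $L^p$ concentration framework, that the averaged production terms are genuinely controlled by $\mathcal{H}$: one must re-run the growth estimates underlying Lemma~\ref{lemuseful} for the functions $G$ and $G\cdot P$ to obtain $\langle\boldsymbol{\nu},G(\lambda|\bu)\rangle\le C\mathcal{H}$ and~\eqref{hypdissvf2}, and — crucially — verify that the source data do not create an additional concentration measure beyond $\boldsymbol{\gamma}$ in the entropy balance, which could otherwise destroy the favorable sign used to discard the concentration terms. This is exactly where~\eqref{Ggrowth} and~\eqref{PGgrowth} (or, in case (i), the weak dissipativity~\eqref{hypdissvf}) enter decisively.
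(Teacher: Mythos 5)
Your proposal is correct and follows essentially the same route the paper intends: the paper gives no displayed proof of this theorem, treating it as the extension of Theorem~\ref{thmweakstrong} via the identity~\eqref{relenbal} and the discussion of~\eqref{hypdissvf}, \eqref{Ggrowth} and~\eqref{hypdissvf2}, which is exactly what you carry out. The only slight imprecision is your closing lower bound $\mathcal{H}\ge c\,\langle\boldsymbol{\nu},|\lambda-\bu|^2\rangle$, which in the $L^p$ framework holds only on the bounded region $|\lambda|\le R$; to conclude $\boldsymbol{\nu}=\delta_{\bu}$ one should instead invoke Lemma~\ref{lemvariant} (or simply the strict positivity $\eta(\lambda|\bu)>0$ for $\lambda\ne\bu$).
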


\section{One-dimensional gas dynamics for viscous and heat-conducting gases}
\label{sec-vhcg}

Systems from thermomechanics typically have degenerate viscosity matrices.  It is however conceivable that  the dissipation 
still dominates the errors.  We next outline the relative entropy calculation for the system of thermoviscoelasticity.
For pedagogical reasons, we develop in this section the calculation in one-space
dimension. In the next section, we consider the three-dimensional case and focus on the treatment of the parabolic terms.
Our objective is to depict how the general hypotheses of Section \ref{secrelen} specialize and get  adapted to treat this
paradigm.

We consider the one-dimensional hyperbolic-parabolic system 
\begin{equation}
\label{vhcg}
\begin{aligned}
u_t - v_x &= 0
\\
v_t - \sigma(u,\theta)_x &= ( \mu v_x )_x + f
\\
\big ( \tfrac{1}{2} v^2 + e(u, \theta) \big )_t - ( \sigma(u,\theta) \, v )_x 
&= ( \mu \,  v \, v_x )_x +  (\kappa \theta_x )_x + f \, v + r  \, ,
\end{aligned}
\end{equation}
describing the equations of gas dynamics  in Lagrangean coordinates  for a viscous, heat-conducting gas. 
The gas obeys a Stokes constitutive law for the viscosity and a Fourier law for the heat conduction. 
In this model $u$ stands for the specific volume (the inverse of the density), $v$ for the longitudinal velocity,
and $\theta$ for the temperature,  while the internal energy $e$ and  the stress $\sigma$ are determined via constitutive relations; 
in this interpretation $u > 0$ and $\theta > 0$. Another interpretation of \eqref{vhcg} is as describing one dimensional shear motions
of a thermoviscoelastic material; in this case $u$ is the shear strain, $v$ the velocity in the shear direction , $\theta > 0$ the temperature
and the rest of the variables similar to before. In this interpretation $u$ does not obey any positivity constraint.

We impose the usual relations on the constitutive theory of thermoviscoelasticity implied by the requirement of 
compatibility of the constitutive theory with the  Clausius-Duhem inequality of thermodynamics, \cite{cn63,cm64}.
The  constitutive theory is determined by a free energy function $\psi = \psi (u, \theta)$ via the formulas
\begin{equation}
\label{const}
\sigma = \frac{ \del \psi}{\del u} \, , \quad \eta = - \frac{ \del \psi}{\del \theta}  \, , \quad e = \psi + \theta \, \eta
\end{equation}
and thus satisfies the Maxwell relations
\begin{equation}
\label{maxwell}
\sigma_\theta =  - \eta_u  \, , \quad  e_\theta = \theta \eta_\theta \, , \quad e_u = \sigma - \theta \sigma_\theta \, .
\end{equation}
We  assume the viscous part of the stress is that of a Stokes fluid
$$
\tau_{viscous} = \mu (u,\theta)  v_x \quad \mbox{ with $\mu (u,\theta) \ge 0$},
$$
while heat conduction is given by a Fourier law
$$
Q = \kappa(u, \theta)  \theta_x \quad \mbox{ with $\kappa (u,\theta) \ge 0$}.
$$

Under the above conditions the theory of one-dimensional viscoelasticity satisfies the entropy increase
identity
\begin{equation}
\label{entropyp}
\del_t \eta (u,\theta) - \del_x \frac{Q}{\theta} = \mu \frac{v_x^2}{\theta} + \kappa  \left ( \frac{\theta_x}{\theta} \right )^2 + \frac{r}{\theta}
\end{equation}
which implies the local form of the Clausius-Duhem inequality
$$
\del_t \eta  - \del_x \frac{Q}{\theta}  \ge \frac{r}{\theta} \, .
$$
The latter expresses a (beyond equilibrium) version of the 2nd law of thermodynamics.

\subsection{Properties of the relative entropy}
\label{sec-relen-defvhcg}

Next, we develop  a relative entropy calculation for the system~\eqref{vhcg},
guided by the theory developed in Section \ref{sec-relen} for hyperbolic-parabolic systems \eqref{hyppara}
in one-space dimension,
\begin{equation}
\label{hyppar1d}
\del_t A(U) + \del_x F(U) = \eps \del_x \big ( B(U) \del_x U  \big ) \, .
\end{equation}
Recall that $A(U)$ is assumed to be globally invertible (see  \eqref{hypns}) and that \eqref{hyppar1d} is equipped 
with an entropy - entropy flux pair $\heta(U) - \hq(U)$ generated by the multiplier $G(U)$:
\begin{equation}
\label{1dh2}
G(U) \cdot \nabla A(U) = \nabla \heta (U) \, , \quad G(U) \cdot \nabla F(U) = \nabla \hq (U) \, .
\end{equation}
The entropy is assumed to satisfy hypothesis \eqref{hyppd}, that
\begin{equation}
\label{1dh3}
\nabla^2 \heta (U) - G(U) \cdot \nabla^2 A(U) : = \nabla^2 \heta (U) - \sum_{j=1}^n G^j (U) \nabla^2 A^j (U)  > 0
\end{equation}
is positive definite. For the viscosity matrix $B(U)$, hypothesis  \eqref{hyp1} becomes 
\begin{equation}
\label{1dh4}
\xi \cdot  \nabla G(U)^T B (U) \xi  \ge 0  \qquad \forall \xi \in \R^n \, . 
\end{equation}
that is $\nabla G(U)^T B (U)$ is positive semidefinite. Then, system \eqref{hyppar1d} is endowed with the dissipative structure
\begin{equation}
\label{dissstr1d}
\del_t \heta (U) + \del_x \hq (U) = \eps \del_x \big ( G(U) \cdot B(U) \del_x U \big ) - 
\eps U_x \cdot \nabla G(U)^T B (U) U_x  \, .
\end{equation}

We proceed to place the model~\eqref{vhcg} within the general theory for  \eqref{hyppar1d} described above.
To this end, we set
\begin{equation*}
\begin{split}
U  =  
\begin{pmatrix}
u \\ v \\ \theta
\end{pmatrix}
\quad
A(U) =
\begin{pmatrix}
u \\ v \\ \tfrac{1}{2} v^2 + e(u,\theta)
\end{pmatrix}
\quad
F(U) =
- \begin{pmatrix}
v \\ \sigma (u, \theta)  \\ v \, \sigma(u, \theta)
\end{pmatrix}
\quad
G(U) =
\begin{pmatrix}
\frac{\sigma(u,\theta)}{\theta} \\ \frac{v}{\theta} \\ - \frac{1}{\theta}
\end{pmatrix}
\end{split}
\end{equation*}
and note that
$$
 \nabla G(U)^T B (U) = 
 \begin{pmatrix}
\frac{\sigma_u }{\theta} & 0 & 0 
\\
0 &  \frac{1}{\theta} & 0
\\
- \frac{\sigma }{\theta^2}  &  -  \frac{v}{\theta^2}  & \frac{1}{\theta^2}
\end{pmatrix}
\, 
 \begin{pmatrix}
0 & 0 & 0 
\\
0 &  \mu  & 0
\\
0  &  \mu v   & \kappa
\end{pmatrix}
=
 \begin{pmatrix}
0 & 0 & 0 
\\
0 & \frac{1}{\theta} \mu  & 0
\\
0  &  0   & \frac{1}{\theta^2} \kappa
\end{pmatrix} \, .
$$
The dissipativity hypothesis \eqref{1dh4} translates to $\mu(u,\theta) \ge 0$, $\kappa (u, \theta ) \ge 0$.

The global invertibility hypothesis \eqref{hypns} of $A(U)$ is secured by the assumption $e_\theta (u, \theta ) > 0$.
With a slight abuse of notation, we set 
$$
\heta (U) = - \eta (u, \theta) \, ,
$$ 
where $\heta (U) $ is the "mathematical" entropy  and $\eta (u, \theta)$ the thermodynamic entropy in \eqref{const}.
Note that
$$
G(U) \cdot \nabla A(U) = \nabla_U \heta(U)  = - \nabla_{ \substack{\scalebox{0.6}{$ (u,v,\theta)$ }  } }  \eta (u,\theta) \, , \quad
G(U) \cdot \nabla F(U) = 0
$$
which should be compared to \eqref{hypep} (or \eqref{1dh2}). Solutions of  \eqref{vhcg} satisfy  the entropy production identity \eqref{entropyp} 
which stands for the specification of \eqref{dissstr1d} in the setting of \eqref{vhcg}.

Apply now the formula \eqref{defrelen} to the expression $\heta (U) = - \eta (u, \theta)$ to obtain
\begin{align}
\heta (U | \bU ) &= \heta (U) - \heta (\bU) - G(\bU) \cdot ( A(U) - A(\bU))
\nonumber
\\
&=  - \eta(u,\theta) + \eta ( \bu, \btheta) - \frac{1}{\btheta} ( \bsigma, \bv, -1) 
\cdot \Big ( u-\bu, \, v-\bv,  \, e(u, \theta)  + \tfrac{1}{2} v^2 - e (\bu, \btheta) - \tfrac{1}{2} \bv^2 \Big )
\label{defrelenth}
\\
&\stackrel{\eqref{const}}{=} \frac{1}{\btheta} \Big [ \psi (u, \theta | \bu, \btheta) +\tfrac{1}{2} (v - \bv)^2 + ( \eta (u,\theta) - \eta (\bu, \btheta))(\theta - \btheta) \Big ]\,,
\label{comment}
\end{align}
where 
\begin{equation*}
\begin{aligned}
\psi (u, \theta | \bu, \btheta) &= \psi -  \psi (\bu, \btheta) - \frac{ \del \psi}{\del u} (\bu, \btheta) (u - \bu)
 - \frac{ \del \psi}{\del \theta} (\bu, \btheta) (\theta - \btheta)
 \\
 &= \psi - \bar \psi - \bsigma (u - \bu) + \bar \eta (\theta - \btheta)
 \end{aligned}
 \end{equation*}
 with the notation $\bar \psi = \psi (\bu, \btheta)$, $\bar \eta = \eta (\bu, \btheta)$ and so on.
 Finally, note that
 $$
 \nabla^2 \heta (U) - G(U) \cdot \nabla^2 A(U) =
 \begin{pmatrix}
 \tfrac{1}{\theta} e_{uu} - \eta_{uu} & 0 & \tfrac{1}{\theta} e_{u \theta} - \eta_{u \theta}
 \\
 0 &  \tfrac{1}{\theta} & 0
 \\
 \tfrac{1}{\theta} e_{u \theta} - \eta_{u \theta}  & 0 & \tfrac{1}{\theta} e_{\theta \theta} - \eta_{\theta \theta} 
 \end{pmatrix}
 \stackrel{\eqref{maxwell}}{=} \begin{pmatrix}
  \tfrac{1}{\theta} \psi_{uu} & 0 & 0
  \\
  0 &  \tfrac{1}{\theta} & 0
  \\
0 & 0 & \tfrac{1}{\theta} \eta_{\theta } 
 \end{pmatrix}\,.
 $$
 Hence, the condition \eqref{hyppd} of positive definiteness in \eqref{1dh3}
 is equivalent to the usual Gibbs thermodynamic stability conditions $\psi_{uu} > 0$ and $\eta_\theta > 0$.
 (By \eqref{maxwell} these assumptions are consistent with $e_\theta > 0$.)

 \begin{remark}
 \label{rmkconvexity} \rm The notation \eqref{comment} may be somewhat misleading as  \eqref{1dh3} does not amount
 to convexity of the functions appearing explicitly in \eqref{comment}. Instead, proceeding along the lines
 of  Section \ref{sechypconv}, introduce the conserved variables $V = A(U)$, where $V = (u,v, E)$, and define the entropy 
 $\Heta (V)$ via the relation
 $$
 \heta (U) = \Heta \circ A (U) \, .
 $$
 A tedious but straightforward adaptation of the computation in \eqref{appform1} and \eqref{appform2} indicates that
 $$
 (\Heta_u , \Heta_v, \Heta_E) (A(U) ) = (-\eta_u , 0 , \eta_\theta) \cdot 
 \begin{pmatrix}
 1 & 0 & 0
  \\
  0 &  1 & 0
  \\
e_u  & v & e_{\theta } 
 \end{pmatrix}^{-1}
= G(U)
$$
and
$$
 \begin{pmatrix}
 1 & 0 & 0
  \\
  0 &  1 & 0
  \\
e_u  & v & e_{\theta } 
 \end{pmatrix}^T
\cdot 
\nabla^2_{V} \Heta  (A(U) ) 
 \begin{pmatrix}
 1 & 0 & 0
  \\
  0 &  1 & 0
  \\
e_u  & v & e_{\theta } 
 \end{pmatrix}
 = 
 \begin{pmatrix}
  \tfrac{1}{\theta} \psi_{uu} & 0 & 0
  \\
  0 &  \tfrac{1}{\theta} & 0
  \\
0 & 0 & \tfrac{1}{\theta} \eta_{\theta } 
 \end{pmatrix}
 > 0 \, .
$$
The conditions $\psi_{uu} > 0$, $\eta_\theta > 0$ are thus equivalent to the convexity of $\Heta (u,v,E)$
and to the symmetrizability of the equations of one-dimensional gas dynamics.
 \end{remark}

 \subsection{The relative entropy identity}
 \label{sec-relenvhcg}
 
 We proceed to derive the relative entropy identity following the general procedure outlined in Section \ref{secrelenhyppar}.
 Let $(u, v, \theta)$ and $(\bu, \bv, \btheta)$ be two solutions of system~\eqref{vhcg},  each satisfying the associated entropy production
 identity \eqref{entropyp}. Using \eqref{entropyp} we obtain
 \begin{equation}
 \label{identh1}
 \begin{aligned}
 \del_t \Big [ - \btheta \eta + \btheta \bareta \Big ] 
& + \del_x \Big [ \btheta \frac{\kappa \theta_x}{\theta} - \btheta \frac{\bkappa \btheta_x}{\btheta} \Big ]
= 
 - \btheta_t \, (\eta - \bareta) +  \btheta_x \Big ( \frac{\kappa \theta_x}{\theta} - \frac{\bkappa \btheta_x}{\btheta} \Big )
 \\
 &\quad -\btheta \Big ( \mu \frac{v_x^2}{\theta} - \bmu \frac{ \bv_x^2}{\btheta} \Big )
 - \btheta \Big ( \kappa \frac{\theta_x^2}{\theta^2} - \bkappa \frac{\btheta_x^2}{\btheta^2} \Big )
 - \btheta  \Big ( \frac{r}{\theta} - \frac{\br}{\btheta} \Big ) \, .
 \end{aligned}
 \end{equation}
 Next, we subtract equations \eqref{vhcg} for each of the two solutions $(u,v,\theta)$ and $(\bu,\bv,\btheta)$
 and multiply the result by $-G(\bU)=(- \tfrac{\bsigma}{\btheta} , -\tfrac{\bv}{\btheta}, 1)$ and obtain after rearrangement the following identity:
  \begin{equation}
 \label{identh2}
  \begin{aligned}
  &\del_t \Big ( - \bsigma (u-\bu) - \bv (v - \bv) + ( e + \tfrac{1}{2} v^2 - \be - \tfrac{1}{2} \bv^2 ) \Big ) 
  + \del_x \big ( \bsigma (v -\bv) + \bv (\sigma - \bsigma)  - \sigma v + \bsigma \bv \big )
  \\
  &= - \bsigma_t  (u - \bu)  - \bv_t (v-\bv) + \bsigma_x (v - \bv) + \bv_x (\sigma -\bsigma)
  + (- \bv) \Big [ (\mu v_x - \bmu \bv_x )_x + (f - \barf) \Big ] 
  \\
  &\quad + \Big [ \del_x ( \mu v v_x - \bmu \bv \bv_x ) + \del_x (\kappa \theta_x - \bkappa \btheta_x ) + (r -\br) + (f v - \barf \bv) \Big ]\,.
  \end{aligned}
 \end{equation}

 Next, we add \eqref{identh1} with \eqref{identh2} and use \eqref{const} and \eqref{defrelenth} to obtain
   \begin{equation}
 \label{identh3}
  \begin{aligned}
  \del_t \Big (  \psi(u,\theta | \bu, \btheta) + \tfrac{1}{2} (v - \bv)^2 + (\eta - \bareta) (\theta - \btheta)  \Big )
& -  \del_x \big ( (\sigma - \bsigma ) (v -\bv) \big )
  \\
  &= I_1 + I_2 + I_3 + I_4 + I_5\,,
  \end{aligned}
 \end{equation}
  where
   \begin{align}
 \nonumber
I_1   &= - \btheta_t (\eta - \bareta)  - \bsigma_t  (u - \bu)  - \bv_t (v-\bv) + \bsigma_x (v - \bv) + \bv_x (\sigma -\bsigma)
\\
&= - ( \bv_t - \bsigma_x ) (v- \bv) + \Big [ - \btheta_t (\eta - \bareta)  - \bsigma_t  (u - \bu) + \bu_t (\sigma - \bsigma) \Big ]
\label{idefi1}
\\
&=:  A + B\,,
\nonumber
\end{align}
while
 \begin{align}
  \label{defi2}
 I_2  &= - \del_x \Big (  \btheta \frac{\kappa \theta_x}{\theta} - \btheta \frac{\bkappa \btheta_x}{\btheta} \Big ) 
              + \del_x ( \kappa \theta_x - \bkappa \btheta_x ) 
              + \del_x \Big ( - \bv (\mu v_x - \bmu \bv_x) + \mu v v_x - \bmu \bv \bv_x \Big  )\,,
 \\
  \label{defi3}
 I_3 &= \btheta_x \Big ( \frac{\kappa \theta_x}{\theta} - \frac{\bkappa \btheta_x}{\btheta} \Big ) 
 - \btheta \Big ( \kappa \frac{\theta_x^2}{\theta^2} - \bkappa \frac{\btheta_x^2}{\btheta^2} \Big )\,,
 \\
  \label{defi4}
 I_4 &= \bv_x (\mu v_x - \bmu \bv_x)
 -\btheta \Big ( \mu \frac{v_x^2}{\theta} - \bmu \frac{ \bv_x^2}{\btheta} \Big )\,,
  \\
   \label{defi5}
 I_5 &= (r - \br) - \btheta \Big ( \frac{r}{\theta} - \frac{\br}{\btheta} \Big ) + (f v - \barf \bv) - \bv (f - \barf)\,.
 \end{align}

 The last step is to re-arrange the terms and collect them together in groups of likewise terms. In this direction, we first use
\eqref{vhcg} to obtain
\begin{equation}
\label{identh4}
\begin{aligned}
A  &= - (v-\bv) \big [ (\bmu \bv_x)_x + \bar f \big ] 
\\
&= - \del_x \Big ( (v - \bv) \bmu \bv_x   \Big ) + (v_x - \bv_x ) \bmu \bv_x - (v - \bv) \barf
\\
&=: i_2 + i_4 + i_5\,.
\end{aligned}
\end{equation}
Again using \eqref{vhcg}, \eqref{const} and \eqref{maxwell} we derive
\begin{align}
B  &= - \btheta_t \Big ( \eta(u, \theta)  - \eta (\bu, \btheta) + \sigma_\theta (\bu, \btheta) (u - \bu) 
- \eta_\theta (\bu, \btheta) (\theta - \btheta) \Big )
\nonumber
\\
&\quad + \bu_t \Big ( \sigma (u, \theta) - \sigma(\bu, \btheta) - \sigma_u (\bu, \btheta) (u - \bu) 
+ \eta_u (\bu, \btheta) (\theta - \btheta) \Big )
\nonumber
\\
&\quad -\btheta_t \eta_\theta (\bu, \btheta) (\theta - \btheta)  - \bu_t \eta_u (\bu, \btheta) (\theta - \btheta) 
\nonumber
\\
&= - \btheta_t  \, \eta (u, \theta | \bu, \btheta) + \bu_t  \, \sigma(u, \theta | \bu, \btheta) - \bareta_t  (\theta - \btheta)
\nonumber
\\
&\stackrel{\eqref{entropyp}}{=}
-  \btheta_t  \, \eta (u, \theta | \bu, \btheta) + \bu_t  \, \sigma(u, \theta | \bu, \btheta)  
 - (\theta - \btheta) \Big [ \left ( \frac{\bkappa \btheta_x }{\btheta} \right )_x + \bmu \frac{\bv_x^2}{\btheta} 
 + \frac{\bkappa \btheta_x^2}{\btheta^2} + \frac{\br}{\btheta} \Big ]
 \nonumber
 \\
 &= \Big ( -  \btheta_t  \, \eta (u, \theta | \bu, \btheta) + \bu_t  \, \sigma(u, \theta | \bu, \btheta) \Big ) 
 + \del_x \Big ( - (\theta - \btheta)  \frac{\bkappa \btheta_x }{\btheta} \Big ) 
 +  \Big ( (\theta_x - \btheta_x) \frac{\bkappa \btheta_x }{\btheta}  + \frac{\bkappa \btheta_x^2}{\btheta^2} \Big )
\nonumber
\\
&\qquad + \Big ( - (\theta - \btheta) \bmu \frac{\bv_x^2}{\btheta}  \Big ) +  \Big (  - (\theta - \btheta) \frac{\br}{\btheta} \Big )
 \label{identh5}
\\
&=:  j_1 + j_2 + j_3 +  j_ 4 + j_5\,.
\nonumber
\end{align}
where the $j_i$'s stand for each of the last five terms in \eqref{identh5}. Observe next that the terms can be regrouped as follows:
$$
\begin{aligned}
I_2 + i_2 +  j_2 &=  \del_x \Big ( (\theta - \btheta) \big ( \frac{\kappa \theta_x}{\theta} - \frac{\bkappa \btheta_x}{\btheta} \big ) 
+ (\mu v_x -  \bmu \bv_x ) (v - \bv) \Big )
\\
I_3 + j_3  &=  - \Big ( \frac{ \theta_x}{\theta} - \frac{ \btheta_x}{\btheta}   \Big ) 
           \Big (  \btheta \frac{\kappa \theta_x}{\theta} - \theta  \frac{\bkappa \btheta_x}{\btheta}   \Big )
           = - \btheta \kappa \Big ( \frac{ \theta_x}{\theta} - \frac{ \btheta_x}{\btheta}   \Big )^2 
               - \Big ( \frac{ \theta_x}{\theta} - \frac{ \btheta_x}{\btheta}   \Big ) \frac{ \btheta_x}{\btheta}   (\btheta \kappa - \theta \bkappa )
\\
I_4 + i_4 + j_4 &= - \theta \btheta \Big ( \mu \frac{v_x}{\theta} - \bmu \frac{\bv_x}{\btheta} \Big ) 
\Big ( \frac{v_x}{\theta} - \frac{\bv_x}{\btheta} \Big ) =
- \theta \btheta \mu \Big (  \frac{v_x}{\theta} - \frac{\bv_x}{\btheta} \Big )^2 
- \theta \btheta (\mu - \bmu)  \frac{\bv_x}{\btheta} (  \frac{v_x}{\theta} - \frac{\bv_x}{\btheta} \Big )
\\
I_5 + i_5 + j_5 &= (f - \barf)( v - \bv) + (\theta - \btheta) \Big ( \frac{r}{\theta} - \frac{\br}{\btheta} \Big)\,.
\end{aligned}
$$

We then substitute \eqref{idefi1}, \eqref{identh4} and \eqref{identh5} to \eqref{identh3} and arrive at the final
form of the relative entropy identity
\begin{equation}
 \label{idenrelenth}
\begin{aligned}
\del_t  &\Big (  \psi(u,\theta | \bu, \btheta) + \tfrac{1}{2} (v - \bv)^2 + (\eta - \bareta) (\theta - \btheta)  \Big )
\\
&\quad-  \del_x \left [ (\sigma - \bsigma ) (v -\bv)  +   (\mu v_x -  \bmu \bv_x ) (v - \bv)  + 
 (\theta - \btheta) \big ( \frac{\kappa \theta_x}{\theta} - \frac{\bkappa \btheta_x}{\btheta} \big )   \right ]
 \\
&\quad + \btheta \kappa \Big ( \frac{ \theta_x}{\theta} - \frac{ \btheta_x}{\btheta}   \Big )^2 
   + \theta \btheta  \mu \Big (  \frac{v_x}{\theta} -  \frac{\bv_x}{\btheta} \Big )^2 
  \\
  &= -  \btheta_t  \, \eta (u, \theta | \bu, \btheta) + \bu_t  \, \sigma(u, \theta | \bu, \btheta)  
  + (f - \barf)( v - \bv) + (\theta - \btheta) \Big ( \frac{r}{\theta} - \frac{\br}{\btheta} \Big)
  \\
 &\quad - \Big ( \frac{ \theta_x}{\theta} - \frac{ \btheta_x}{\btheta}   \Big ) \frac{ \btheta_x}{\btheta}   (\btheta \kappa - \theta \bkappa )
 - \theta \btheta (\mu - \bmu)  \frac{\bv_x}{\btheta} (  \frac{v_x}{\theta} - \frac{\bv_x}{\btheta} \Big )\,,
 \end{aligned}
 \end{equation}
where 
\begin{equation}
\begin{aligned}
\eta (u, \theta | \bu, \btheta) &= \eta(u, \theta)  - \eta (\bu, \btheta) - \eta_u (\bu, \btheta) (u - \bu) 
- \eta_\theta (\bu, \btheta) (\theta - \btheta) 
\\
\sigma (u, \theta | \bu, \btheta) &= \sigma(u, \theta)  - \sigma (\bu, \btheta) - \sigma_u (\bu, \btheta) (u - \bu) 
- \sigma_\theta (\bu, \btheta) (\theta - \btheta) \,.
\end{aligned}
\end{equation}



%
%
%

\section{Application to the constitutive theory of thermoviscoelasticity}
\label{sec-thermov}

In this section we perform the relative entropy calculation for the system of thermoviscoelasticity in several
space dimensions. This calculation is an extension of that in Section \ref{sec-vhcg} as the system of thermoviscoelasticity 
when restricted to one-space dimension (when restricted to the particular case of Stokes viscosity and Fourier heat conduction) produces precisely 
the  system \eqref{vhcg}.  

\subsection{The constitutive theory}
\label{S4.const}
The requirements imposed from thermodynamics on the constitutive theory of thermoviscoelasticity 
were developed in \cite{cn63, cm64} and a summary can be found in \cite[Sec 3.2]{daf10}. The constitutive functions depend
on the deformation gradient $F$, the strain rate $\dot F$, the temperature $\theta$ and the temperature gradient $g = \nabla \theta$, hence
the name thermoviscoelasticity. The elastic part is generated by a free energy 
function $\psi$:
\begin{equation}
\label{thermovcr}
\begin{aligned}
\psi &= \psi ( F, \theta)\,,
\\
\Sigma &=  \; \frac{\del \psi}{\del F} (F, \theta) \,,
\\
\eta &= - \frac{\del \psi}{\del \theta} (F, \theta) \,,
\\
e &= \psi + \theta \eta \, ;
\end{aligned}
\end{equation}
note that \eqref{thermovcr} imply the Maxwell relations
\begin{equation}
\label{maxwellrel}
\frac{\del \Sigma_{i \alpha}}{\del \theta} = - \frac{\del \eta}{\del F_{i \alpha}} \, , \quad 
\frac{\del \Sigma_{i \alpha}}{\del F_{j \beta}} = \frac{\del^2 \psi}{\del F_{i \alpha} \del F_{j \beta}} = \frac{\del \Sigma_{j \beta} }{\del F_{i \alpha}}\,.
\end{equation}

The total stress is decomposed into an elastic part $\Sigma$ and a viscoelastic part $Z = Z (F, \theta, g, \Fdot )$ where $\Sigma$ and $Z$ are
both symmetric tensor valued functions, $Z (F, \theta, 0, 0) = 0$ so that $\Sigma$ is indeed the elastic part, according to the formula
\begin{equation}
\label{thermovcr2}
\begin{aligned}
S &= \Sigma (F, \theta) + Z (F, \theta, g, \Fdot )  
\\
&= \frac{\del \psi}{\del F} (F, \theta) + Z (F, \theta, g, \Fdot ) \,,
\\
Q &= Q( F, \theta, g)\,.
\end{aligned}
\end{equation}
Moreover, the heat flux $Q$ and  the viscoelastic contribution to the stress $Z$ have to satisfy
\begin{equation}
\label{cdineq}
\tfrac{1}{\theta} G \cdot Q(F, \theta, g) + \Fdot : Z (F, \theta, g, \Fdot) \ge 0   \quad  \forall (F, \theta,  g, \Fdot)\, ,
\tag{H}
\end{equation}
which along with \eqref{thermovcr} guarantee consistency for smooth processes with the Clausius-Duhem inequality \cite{cn63, cm64}.

Here, for simplicity, we place the additional assumption  $Z = Z(F, \theta, \Fdot)$, that is $Z$ is taken independent of $g$. 
Then condition~\eqref{cdineq} implies $Q (F, \theta, 0) = 0 $, $Z (F, \theta, 0) = 0$, and accordingly \eqref{cdineq} decomposes into two
distinct inequalities
\begin{equation}
\label{cdineqp}
\tfrac{1}{\theta} g \cdot Q(F, \theta, g) \ge 0 \quad \mbox{and} \quad  \Fdot : Z (F, \theta, \Fdot) \ge 0  \,.
\tag{H$^\prime$}
\end{equation}

In summary, the system of thermoviscoelasticity reads
\begin{equation}
\label{thermov}
\begin{aligned}
F_t &= \nabla v
\\
v_t  &= \div ( \Sigma + Z ) + f
\\
\del_t ( \tfrac{1}{2} |v|^2 + e ) &= \div ( v \cdot  \Sigma + v \cdot Z) + \div Q + v \cdot f + r\,.
\end{aligned}
\end{equation}
where $x$ stands for the Lagrangean variable,  $\div$ is the usual divergence operator (in referential coordinates), while $\del_t$ stands here for
 the material derivative.
 Smooth solutions of \eqref{thermov} satisfy the energy dissipation identity
 $$
 \del_t e = \nabla v :  ( \Sigma  + Z ) + \div Q + r
 $$
 and, using the constitutive hypotheses of the theory, one arrives at the entropy production identity
 \begin{equation}
 \label{entropypr}
 \del_t \eta - \div \frac{Q}{\theta} = \frac{1}{\theta^2} \nabla \theta \cdot Q + \frac{1}{\theta} \nabla v : Z + \frac{r}{\theta}\,.
 \end{equation}
 
\subsection{The relative entropy identity}\label{S4.1}
 
In a similar fashion to  Section \ref{sec-relen-defvhcg}, we set
\begin{equation*}
\begin{split}
U  =  
\begin{pmatrix}
F \\ v \\ \theta
\end{pmatrix}\in \R^{d^2 + d +1}\;,
\quad
A(U) =
\begin{pmatrix}
F \\ v \\ \tfrac{1}{2} v^2 + e(F,\theta) 
\end{pmatrix}
\;,
\quad
G (U) =
\begin{pmatrix}
\frac{\Sigma(F,\theta)}{\theta} \\ \frac{v }{\theta} \\ -\frac{1}{\theta}
\end{pmatrix}
\end{split}
\end{equation*}
where the tensor $F \in \R^{d^2}$ is viewed as a column vector in forming $U$, while $\Sigma$ is determined by \eqref{thermovcr} and is
viewed again as column vector.
We impose $e_\theta (F, \theta) > 0$, so that $A(U)$ is globally invertible, and set  
\begin{equation*}
\heta (U) := - \eta (F, \theta) \, , 
\end{equation*}
where $\heta (U)$ is the mathematical entropy
and $\eta (u, \theta)$ the thermodynamic one. The relative entropy is defined by the formula
\begin{equation}
\begin{aligned}
\heta (U | \bU ) &=  - \eta(F,\theta) + \eta ( \bF , \btheta) - \frac{1}{\btheta} ( \bSigma, \bv, -1) 
\cdot \Big ( F-\bF, \, v-\bv,  \, e(F, \theta)  + \tfrac{1}{2} |v|^2 - e (\bF, \btheta) - \tfrac{1}{2} |\bv|^2 \Big )
\\
&\stackrel{\eqref{thermovcr}}{=} \frac{1}{\btheta} \Big [ \psi (F, \theta | \bF, \btheta) +\tfrac{1}{2} |v - \bv|^2 + ( \eta (F,\theta) - \eta (\bF, \btheta))(\theta - \btheta) \Big ]\,,
\end{aligned}
\label{defrelenmd}
\end{equation}
where
\begin{equation}
\label{defrelpsi}
\begin{aligned}
\psi (F, \theta | \bF, \btheta) &= \psi (F, \theta)  -  \psi (\bF, \btheta) - \frac{ \del \psi}{\del F}  (\bF, \btheta) :  (F  - \bF)
 - \frac{ \del \psi}{\del \theta} (\bF, \btheta) (\theta - \btheta)
 \\
 &= \psi - \bar \psi - \bSigma  : (F - \bF) + \bar \eta (\theta - \btheta)
 \end{aligned}
 \end{equation}
 with  $\bar \psi = \psi (\bF, \btheta)$, $\bar \eta = \eta (\bF, \btheta)$ and so on.
 We again note that
 $$
 \nabla^2 \heta (U) - G(U) \cdot \nabla^2 A(U) 
  \stackrel{\eqref{thermovcr}}{=} \begin{pmatrix}
  \tfrac{1}{\theta} \psi_{ FF} & 0 & 0
  \\
  0 &  \tfrac{1}{\theta} & 0
  \\
0 & 0 & \tfrac{1}{\theta} \eta_{\theta } 
 \end{pmatrix}
 $$
and the positivity for the matrix $\nabla^2 \heta (U) - G(U) \cdot \nabla^2 A(U)$
 is equivalent to the usual Gibbs thermodynamic stability conditions $\psi_{FF} > 0$ and $\eta_\theta > 0$.
 
 We next follow Section \ref{sec-relenvhcg} adapted to the present multi-dimensional case. Similar calculations can be
 found in \cite{dafermos79} for the case when viscosity and heat conduction are absent. 
 Let $(F, v, \theta)$ and $(\bF, \bv, \btheta)$
 be two smooth solutions of \eqref{thermov}  with temperatures  $\theta > 0$ and $\btheta > 0$ that satisfy \eqref{entropypr}.
 We subtract equations \eqref{entropypr} for the two respective solutions,
 multiply by $\btheta$ and rewrite the result in the form
 \begin{equation}
 \label{thermoiden1}
 \begin{aligned}
 \del_t \big ( - \btheta \eta  + \btheta \bareta \big )  + \div \big ( \btheta \frac{Q}{\theta} - \btheta \frac{ \bQ}{\btheta} \big ) = - (\del_t \btheta) (\eta - \bareta) 
  + \nabla_x \btheta \cdot \big ( \frac{Q}{\theta}  -  \frac{ \bQ}{\btheta} \big )
 \\
 - \btheta \Big ( \frac{ \nabla v : Z }{\theta} -  \frac{ \nabla \bv : \bZ }{\btheta} \Big )
 - \btheta \Big ( \frac{ \nabla \theta \cdot Q}{\theta^2} - \frac{ \nabla \btheta \cdot \bQ}{\btheta^2} \Big ) - \btheta \big (\frac{r}{\theta} - \frac{\br}{\btheta} \big ) \,.
\end{aligned}
\end{equation}

Next, write the difference between equations \eqref{thermov} for the two solutions, multiply the resulting identity
by $( - \bSigma_{i \alpha}, - \bv_i , 1)$ and perform some re-organization of the terms to obtain
 \begin{equation}
 \label{thermoiden2}
 \begin{aligned}
 &\del_t \Big ( - \bSigma_{i \alpha} (F_{i \alpha} - \bF_{i \alpha} ) - \bv_i (v_i -\bv_i) + (e + \tfrac{1}{2} |v|^2 - \be - \tfrac{1}{2} |\bv|^2 ) \Big )
 \\
 &\qquad+ \del_{\alpha} \Big ( (v_i -\bv_i) \bSigma_{i \alpha} + \bv_i (\Sigma_{i \alpha} - \bSigma_{i \alpha}) - v_i \Sigma_{i \alpha} + \bv_i \bSigma_{i \alpha} \Big )
 \\
&\quad=
 - ( \del_t \bSigma_{i \alpha} ) (F_{i \alpha} - \bF_{i \alpha} ) - (\del_t \bv_i ) (v_i -\bv_i) + (\del_\alpha \bSigma_{i \alpha} ) (v_i -\bv_i) 
 + (\del_\alpha \bv_i ) (\Sigma_{i \alpha} - \bSigma_{i \alpha}) 
\\
&\qquad+ (-\bv_i) \Big [ (Z_{i\alpha} - \bZ_{i\alpha} )_{x_\alpha} + (f_i -\barf_i) ) \Big ] 
\\
&\qquad + \Big [ \del_\alpha ( v_i Z_{i\alpha} -  \bv_i \bZ_{i\alpha} ) + \del_\alpha (Q_\alpha - \bQ_\alpha) + ( r - \br) + ( v \cdot f - \bv \cdot \barf ) \Big ]\,.
\end{aligned}
\end{equation}
We then combine \eqref{thermoiden1}, \eqref{thermoiden2} and the identity
$$
- (\del_t \bv_i  - \del_\alpha \bSigma_{i \alpha} ) (v_i -\bv_i)   = - \del_\alpha [  (v_i  -  \bv_i ) \bZ_{i\alpha}  ] + ( \del_\alpha v_i  -  \del_\alpha \bv_i ) \bZ_{i\alpha} 
- (v_i  -  \bv_i ) \barf_i\;,
$$
together with \eqref{defrelenmd} and \eqref{maxwellrel} to obtain
\begin{equation}
\label{thermoiden3}
\begin{aligned}
&\del_t \Big [ \psi (F, \theta | \bF, \btheta ) + (\eta - \bareta)(\theta - \btheta) + \tfrac{1}{2} |v -\bv|^2 \Big ]
\\
&\quad
+ \div \Big ( - (v - \bv) \cdot ( \Sigma + Z - \bSigma - \bZ ) + \btheta ( \frac{Q}{\theta} - \frac{\bQ}{\btheta} ) - (Q - \bQ) \Big )
\\
&\quad =
-\btheta_t  \Big [ \eta(F,\theta)  - \eta (\bF, \btheta) + \frac{\del \Sigma_{i\alpha}}{\del \theta} (\bF, \btheta)  (F_{i\alpha} -\bF_{i\alpha}) 
- \frac{\del \eta}{\del \theta} (\bF, \btheta) \,  (\theta - \btheta) \Big ]
\\
&\qquad +  (\bF_{j\beta})_t \Big [ \Sigma_{j\beta} (F, \theta) - \Sigma_{j\beta} (\bF, \btheta) - \frac{\del \Sigma_{i\alpha}}{\del F_{j\beta}}  (\bF, \btheta)  \, (F_{i\alpha} -\bF_{i\alpha}) 
 - \frac{\del \Sigma_{j\beta}}{\del \theta}   (\bF, \btheta) \, (\theta - \btheta) \Big ]
\\
&\qquad  -\btheta_t  \frac{\del \eta}{\del \theta} (\bF, \btheta) \, (\theta - \btheta) - (\bF_{j\beta})_t  \frac{\del \eta}{\del F_{j\beta}} (\bF, \btheta) \, (\theta - \btheta)  + I_1 + I_2 + I_3
\\
&\quad = - (\btheta_t ) \eta(F, \theta | \bF, \btheta) +  \bF_t : \Sigma (F, \theta | \bF, \btheta) - (\del_t \bareta) (\theta - \btheta) + I_1 + I_2 + I_3\,,
\end{aligned}
\end{equation}
where we have set
\begin{equation}
\label{defrelstress}
\begin{aligned}
\eta(F, \theta | \bF, \btheta) &:= \eta(F,\theta)  - \eta (\bF, \btheta) + \frac{\del \Sigma_{i\alpha}}{\del \theta} (\bF, \btheta)  (F_{i\alpha} -\bF_{i\alpha}) 
- \frac{\del \eta}{\del \theta} (\bF, \btheta) \,  (\theta - \btheta)\,,
\\
\Sigma_{j \beta} (F, \theta | \bF, \btheta) &:= 
\Sigma_{j\beta} (F, \theta) - \Sigma_{j\beta} (\bF, \btheta) - \frac{\del \Sigma_{i\alpha}}{\del F_{j\beta}}  (\bF, \btheta)  \, (F_{i\alpha} -\bF_{i\alpha}) 
 - \frac{\del \Sigma_{j\beta}}{\del \theta}   (\bF, \btheta) \, (\theta - \btheta)\,,
 \end{aligned}
\end{equation}
and
\begin{equation}
\label{thermoiden4}
\begin{aligned}
I_1 &= \nabla_x \bv : ( Z - \bZ) - \btheta \Big ( \frac{\nabla v : Z}{\theta} - \frac{\nabla \bv : \bZ}{\btheta} \Big ) + (\nabla v - \nabla \bv) : \bZ\,,
\\
I_2 &= \nabla \btheta \cdot \Big ( \frac{Q}{\theta} - \frac{\bQ}{\btheta} \Big ) - \btheta \Big ( \frac{\nabla \theta \cdot Q}{\theta^2} -  \frac{\nabla \btheta \cdot \bQ}{\btheta^2} \Big ) \,,
\\
I_3 &= (v - \bv) \cdot (f - \barf) + (r - \br) - \btheta \big ( \frac{r}{\theta} - \frac{\br}{\btheta} \big )\,.
\end{aligned}
\end{equation}

Next, note that identity~\eqref{entropypr} yields
$$
- (\del_t \bareta) (\theta - \btheta ) = - \del_\alpha \big ( \frac{ \bQ_\alpha}{\btheta} (\theta - \btheta ) \big ) + \frac{ \bQ_\alpha}{\btheta}  \del_\alpha (\theta - \btheta )
- \Big [  \frac{1}{\btheta^2}  \nabla \btheta \cdot \bQ + \frac{1}{\btheta} \nabla \bv :  \bZ  + \frac{\br}{\btheta} \Big ] (\theta - \btheta )\,.
$$
The latter, together with \eqref{thermoiden4}, allows  to rewrite \eqref{thermoiden3} in its final form
\begin{equation}
\label{relenthermov}
\begin{aligned}
&\del_t \Big ( \psi (F, \theta | \bF, \btheta ) + (\eta - \bareta)(\theta - \btheta) + \tfrac{1}{2} |v -\bv|^2 \Big )
\\
&\quad -  \div \Big ( (v - \bv) \cdot ( \Sigma + Z - \bSigma - \bZ ) +  (\theta - \btheta ) \big ( \frac{Q}{\theta} - \frac{\bQ}{\btheta} \big ) \Big )
\\
&\qquad =  - \btheta_t \,  \eta(F, \theta | \bF, \btheta) + \bF_t : \Sigma (F, \theta | \bF, \btheta)
\\ 
&\qquad \quad - \theta \btheta \Big ( \frac{\nabla v}{\theta} - \frac{\nabla \bv}{\btheta} \Big ) : \Big ( \frac{Z}{\theta} - \frac{\bZ}{\btheta} \Big )
- \Big ( \btheta \frac{Q}{\theta} - \theta \frac{\bQ}{\btheta} \Big ) \cdot \Big ( \frac{\nabla \theta}{\theta} - \frac{\nabla \btheta}{\btheta} \Big )
\\
&\qquad \quad + (v - \bv) \cdot (f - \barf) + (\theta - \btheta) \big ( \frac{r}{\theta} - \frac{\br}{\btheta} \big ) \, ,
\end{aligned}
\end{equation}
and provides the relative entropy formula for the system of thermoviscoelasticity~\eqref{thermov}. 
In  \eqref{relenthermov}, the effect of viscous dissipation and heat conduction is captured respectively by the terms
$$
\begin{aligned}
D_v :=  \theta \btheta \Big ( \frac{\nabla v}{\theta} - \frac{\nabla \bv}{\btheta} \Big ) : \Big ( \frac{Z}{\theta} - \frac{\bZ}{\btheta} \Big )
\\
D_q := \Big ( \btheta \frac{Q}{\theta} - \theta \frac{\bQ}{\btheta} \Big ) \cdot \Big ( \frac{\nabla \theta}{\theta} - \frac{\nabla \btheta}{\btheta} \Big )\,.
\end{aligned}
$$

\medskip
\begin{remark}\label{Rmtwothermo} \rm
The same relative entropy formula is derived when we compare two
constitutive theories that have the same thermoelastic part but different viscoelastic and heat conduction formulas.
Indeed, given two constitutive theories
$$
\psi_i = \psi_i (F,\theta) \, , \quad Q_i = Q_i (F, \theta, g) \, , \quad Z_i = Z_i (F , \theta, g , \Fdot) \, , \quad i = 1,2 \, ,
$$
such that $\psi_1 (F, \theta) = \psi_2 (F , \theta ) = : \psi (F, \theta)$ but  $Q_1 \ne Q_2$ and $Z_1 \ne Z_2$,
if  $(F, \theta)$ is a smooth solution associated to the first constitutive theory 
and $(\bF, \btheta)$  a smooth solution  associated to the second, then setting
$$
\begin{aligned}
&Q = Q_1 ( F , \theta, \nabla \theta) \, , \quad Z = Z_1 ( F , \theta, \nabla \theta , \Fdot) \, ,
\\
&\bQ = Q_2 ( \bF , \btheta, \nabla \btheta) \, , \quad \bZ = Z_2 ( \bF , \btheta, \nabla \btheta , \bFdot)  \, ,
\end{aligned}
$$
the two solutions satisfy the same relative entropy identity \eqref{relenthermov}.
In particular, this identity holds when we compare a theory of thermoviscoelasticity to the formal limiting theory
of thermoelastic non-conductors of heat (by directly setting in \eqref{relenthermov}  $\bQ = 0$ and $\bZ = 0$).
\end{remark}

\subsection{Convergence to the system of adiabatic thermoelasticity}\label{S4.conv}
Next, we consider the limiting process from the system of thermoviscoelasticity~\eqref{thermov} 
for a Newtonian viscous fluid with Fourier heat conduction in
the limit $k \to 0$, $\mu \to 0$ to the system of adiabatic thermoelasticity.
Let $U  = (F,v,\theta)^T \in \R^{d^2 + d +1}$, $\theta > 0$,  be a smooth solution of the system of thermoviscoelasticity
\begin{equation}
\label{thermov2}
\begin{aligned}
F_t &= \nabla v
\\
v_t  &= \div ( \Sigma + \mu(F,\theta)\nabla v) 
\\
\del_t ( \tfrac{1}{2} |v|^2 + e ) &= \div ( v \cdot  \Sigma +  \mu(F,\theta)v \cdot\nabla v) + \div(k(F,\theta)\nabla \theta)
\end{aligned}
\end{equation}
satisfying~\eqref{thermovcr}, \eqref{thermovcr2} for a Newtonian viscous fluid $Z=\mu(F,\theta)\nabla v$ with Fourier heat conduction $Q=k(F,\theta)\nabla \theta$, 
$\mu>0,k>0$. Let $\bar U  =  (\bF,\bv,\btheta)^T $ be a smooth solution, satisfying $\btheta \ge \delta > 0$ for some $\delta > 0$, of the equations of adiabatic thermoelasticity :
\begin{equation}
\label{athermoel}
\begin{aligned}
F_t &= \nabla v
\\
v_t  &= \div \Big ( \frac{\del \psi}{\del F}(F,\theta) \Big )
\\
\del_t ( \tfrac{1}{2} |v|^2 + e ) &= \div \Big ( v \cdot   \frac{\del \psi}{\del F}(F,\theta)  \Big )  \, .
\end{aligned}
\end{equation}
The latter system is obtained formally from \eqref{thermov} (with $f =0, r=0$)  and \eqref{thermovcr}, \eqref{thermovcr2} by setting $\bQ = 0$, $\bZ = 0$.
In what follows we compare the two solutions $U$ and $\bar U$.

By Remark~\ref{Rmtwothermo} using the identities \eqref{thermoiden3}, \eqref{thermoiden4}, and $\bar Z=0$, $\bar Q = 0$, we derive
the relative entropy identity comparing the two solutions,
\eqref{relenthermov}, we can write
\begin{equation}
\label{relenthermo0}
\begin{aligned}
&\del_t \Big ( \psi (F, \theta | \bF, \btheta ) + (\eta - \bareta)(\theta - \btheta) + \tfrac{1}{2} |v -\bv|^2 \Big )
 -  \div \Big ( (v - \bv) \cdot ( \Sigma - \bSigma +\mu\nabla v) +  (\theta - \btheta ) k\frac{\nabla\theta}{\theta} \Big )
\\
&\qquad =  - \btheta_t \,  \eta(F, \theta | \bF, \btheta) + \bF_t : \Sigma (F, \theta | \bF, \btheta)
 - \btheta \mu\,\frac{|\nabla v|^2}{\theta}
- \btheta k\,\frac{|\nabla\theta|^2}{\theta^2}
+\mu\,\nabla \bv\cdot\nabla v+ k\,\frac{\nabla\btheta\cdot\nabla\theta}{\theta}\;.
\end{aligned}
\end{equation}
(It should be noted that if we were to start with a solution $U=(F,v,\theta)^T$ which is  an entropy weak solution of~\eqref{thermov2}
 then the above identity would hold as an inequality.)
 
Throughout this section, we assume the usual Gibbs thermodynamics stability conditions, 
\begin{equation}\label{S4H}
\psi_{FF}>0,\quad\text{and}\quad\eta_\theta>0.
\tag{$\text{G}$}
\end{equation}
Before we proceed, lets give some remarks related to~\eqref{S4H}:
\begin{enumerate}
\item[(i)] Since $e_\theta=\theta\eta_\theta >  0$, it follows that $\nabla A$ is nonsingular. The map 
$$
U = (F, v, \theta) \mapsto A(U) = \big ( F, v, \tfrac{1}{2} |v|^2 + e(F, \theta) \big )
$$ 
is globally one-to one. We may invert the map $V=A(U)$ and express $U=A^{-1}(V)$.
\item[(ii)]  If we write $\Heta (V)=\heta (A^{-1}(V))$, then a calculation as in Remark \ref{rmkconvexity}
shows that 
\begin{equation}
\text{\eqref{S4H}} \Longleftrightarrow \Heta (V)\,\text{ is convex in }V\,.
\end{equation}
\item[(iii)]  The positivity of the matrix  $\nabla^2 \heta (U) - G(U) \cdot \nabla^2 A(U)$ is equivalent to~\eqref{S4H}.
\item[(iv)]  The existence of entropy-entropy flux is guaranteed by the consistency of the theory with the second law of thermodynamics.
\end{enumerate}

Next, we place some growth hypotheses: For the internal energy we assume there  is a  constant $c > 0$ and $p,\,q>1$ such that
\begin{equation}\label{littlea11}
c(|F|^p+\theta^q)-c\le e(F,\theta)\le c(|F|^p+\theta^q)+c\,,  \quad \mbox{  $\forall \, (F,\theta) \in \R^{d\times d}\times \R^+ $} \, .
\tag{a$_{1}$}
\end{equation}
For the stress $\Sigma$ and entropy $\eta$ in \eqref{thermovcr} we place the growth restrictions
\begin{equation}
\label{littlea2}
    \lim_{ |F|^p + \theta^q \to \infty}  \frac{ |\Sigma(F,\theta)| }{ |F|^p + \theta^q  }  = 0  \, ,
    \tag{a$_2$}\end{equation}
\begin{equation}
\label{littlea3}
   \lim_{ |F|^p + \theta^q \to \infty}  \frac{ | \eta(F,\theta) | }{ |F|^p + \theta^q  }  = 0  \, .
\tag{a$_3$}\end{equation}

In the sequel, we employ the notation
\begin{equation}
I(F,v,\theta|\bF,\bv,\btheta):=\psi (F, \theta | \bF, \btheta)  + ( \eta (F,\theta) - \eta (\bF, \btheta))(\theta - \btheta) +\tfrac{1}{2} |v - \bv|^2 \,,
\end{equation}
so that $\btheta\,\heta (U|\bU)=I(U|\bU)$ and define the compact set
$$\Gamma_{M,\delta}=\left\{ (\bF,\bv,\btheta):\ |\bF|\le M \,, \, |\bv|\le M,\,0<\delta\le \btheta\le M \right\}$$
where $M$ and $\delta$ are some positive constants. When employing this set, the constants are selected so that the smooth solution
$\bU = (\bF,\bv,\btheta)^T$ takes values in $\Gamma_{M,\delta}$. The following lemma establishes bounds on $I(U|\bU)$ 
that serve later for comparing two solutions $U$ and $\bU$.

\begin{lemma}\label{LemmaI}
Assume that $(\bF,\bv,\btheta)\in \Gamma_{M,\delta}$ and that $\psi(F,\theta)\in C^3 \big (\R^{d\times d}\times[0, \infty) \big )$, 
$\eta(F,\theta)$, $\Sigma(F,\theta)\in C^2(\R^{d\times d}\times [0,\infty))$ satisfy the conditions~\eqref{thermovcr} and~\eqref{S4H}.
Under the growth hypotheses~\eqref{littlea11},~\eqref{littlea2} and~\eqref{littlea3}, the following hold true:
\begin{enumerate}
\item[{(i)}] There exist $R=R(\delta,M)$, $K_1=K_1(\delta,M,c)$ and $K_2=K_2(\delta,M,c)$ such that
\begin{equation}
\label{renbo1}
I (F,v,\theta|\bF,\bv,\btheta)\ge
\begin{cases}
\frac{1}{2} K_1 \Big(|F|^p+\theta^q+|v|^2\Big) & |F|^p+\theta^q+|v|^2>R\\
K_2\Big(|F-\bF|^2+|\theta-\btheta|^2+|v-\bv|^2\Big) & |F|^p+\theta^q+|v|^2\le R
\end{cases}
\end{equation}
for all $(\bF,\bv,\btheta)\in \Gamma_{M,\delta}$.
\item[{(ii)}] There exists a constant $C>0$
\begin{equation}\label{etaboundthermov}
|\eta(F,\theta|\bF,\btheta)|\le C \,I (F,v,\theta|\bF,\bv,\btheta)\qquad \forall \,\,(F,v,\theta)
\end{equation}
for all $(\bF,\bv,\btheta)\in \Gamma_{M,\delta}$.
\item[{(iii)}] There exists a constant $C>0$
\begin{equation}\label{sigmaboundthermov}
|\Sigma(F,\theta|\bF,\btheta)|\le C \,I (F,v,\theta|\bF,\bv,\btheta)\qquad \forall \,\,(F,v,\theta)
\end{equation}
for all $(\bF,\bv,\btheta)\in \Gamma_{M,\delta}$.
\item[{(iv)}] There exist constants $K_1$ and $K_2$ such that 
\begin{equation}
\label{normlike}
I (F,v,\theta|\bF,\bv,\btheta)\ge
\begin{cases}
\frac{1}{4} K_1 \Big(|F-\bF|^p+|\theta-\btheta|^q+|v-\bv|^2\Big) & |F|^p+\theta^q+|v|^2>R\\
K_2\Big(|F-\bF|^2+|\theta-\btheta|^2+|v-\bv|^2\Big) & |F|^p+\theta^q+|v|^2\le R
\end{cases}
\end{equation}
for all $(\bF,\bv,\btheta)\in \Gamma_{M,\delta}$.
\end{enumerate}
\end{lemma}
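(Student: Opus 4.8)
The plan is to base everything on the identity $\btheta\,\heta(U|\bU)=I(U|\bU)$ together with the algebraic simplification of $I$ coming from the constitutive relations \eqref{thermovcr}. Substituting the definition \eqref{defrelpsi} of $\psi(F,\theta|\bF,\btheta)$ into $I$, the entropy contributions telescope and, using $e=\psi+\theta\eta$, one obtains
\begin{equation*}
I(F,v,\theta|\bF,\bv,\btheta)=e(F,\theta)-\btheta\,\eta(F,\theta)-\bar\psi-\bSigma:(F-\bF)+\tfrac12|v-\bv|^2 .
\end{equation*}
This rewriting is the workhorse for the far-field estimates, whereas for the near-diagonal estimates I would keep $I=\btheta\,\heta(U|\bU)$ and exploit that $\heta(U|\bU)$ is the relative entropy of the strictly convex function $\Heta$ of Remark \ref{rmkconvexity}. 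In each part the proof splits the domain into the far region $\{|F|^p+\theta^q+|v|^2>R\}$ and its complement, exactly in the spirit of the appendix estimate \eqref{lemform2} of Lemma \ref{lemuseful}, adapted from \cite{dst12}.

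For part (i) in the far region I would bound $e\ge c(|F|^p+\theta^q)-c$ by \eqref{littlea11}, absorb $\btheta\,\eta=o(|F|^p+\theta^q)$ via \eqref{littlea3} and $\btheta\le M$, absorb the bounded terms $\bar\psi$ and $\bSigma:(F-\bF)$ (here $\bSigma$ is bounded on $\Gamma_{M,\delta}$ and $|\bSigma:(F-\bF)|\le C_M(|F|+M)=o(|F|^p)$ since $p>1$), and use $\tfrac12|v-\bv|^2\ge\tfrac14|v|^2-C_M$; taking $R=R(\delta,M)$ large so the leading growth dominates the additive constants yields \eqref{renbo1} with $K_1=K_1(\delta,M,c)$. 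In the bounded region I would argue by compactness: the quotient $q(U,\bU):=I(U|\bU)/|U-\bU|^2$ is continuous on the compact set $\{|F|^p+\theta^q+|v|^2\le R\}\times\Gamma_{M,\delta}$, is strictly positive off the diagonal (strict convexity of $\Heta$ gives $\heta(U|\bU)>0$ for $U\ne\bU$, and $\btheta\ge\delta$), and on the diagonal its limit equals $\tfrac12\,\btheta\,\hat\xi\cdot\big(\nabla^2\heta(\bU)-G(\bU)\cdot\nabla^2A(\bU)\big)\hat\xi>0$ by \eqref{S4H} and $\btheta\ge\delta$; hence $\min q=:K_2>0$, giving \eqref{renbo1} in the bounded region.

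Parts (ii) and (iii) rest on the observation that, after invoking the Maxwell relations \eqref{maxwellrel}, the relative quantities in \eqref{defrelstress} are precisely the second-order Taylor remainders at the base point $(\bF,\btheta)$: indeed $\eta(F,\theta|\bF,\btheta)=\eta(F,\theta)-\eta(\bF,\btheta)-\tfrac{\del\eta}{\del F_{i\alpha}}(\bF,\btheta)(F_{i\alpha}-\bF_{i\alpha})-\tfrac{\del\eta}{\del\theta}(\bF,\btheta)(\theta-\btheta)$, and similarly for each $\Sigma_{j\beta}$. In the bounded region $C^2$ regularity then gives $|\eta(F,\theta|\bF,\btheta)|,\,|\Sigma(F,\theta|\bF,\btheta)|\le C(|F-\bF|^2+|\theta-\btheta|^2)$, which is controlled by $I$ through the quadratic lower bound of part (i). In the far region the growth hypotheses \eqref{littlea2}, \eqref{littlea3} yield $|\Sigma(F,\theta|\bF,\btheta)|,\,|\eta(F,\theta|\bF,\btheta)|=o(|F|^p+\theta^q)+O(1)$, while part (i) gives $I\ge\tfrac12K_1(|F|^p+\theta^q+|v|^2)\ge\tfrac12K_1R$; the additive constant is absorbed by the positive lower bound and the growing part by $\tfrac12K_1(|F|^p+\theta^q)$, so the ratio stays bounded, establishing \eqref{etaboundthermov} and \eqref{sigmaboundthermov}.

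Finally, part (iv) is a bookkeeping consequence of the far-region estimate in (i): using $|F|^p\ge 2^{1-p}|F-\bF|^p-M^p$, $\theta^q\ge 2^{1-q}|\theta-\btheta|^q-M^q$ and $|v|^2\ge\tfrac12|v-\bv|^2-M^2$ (all valid since $|\bF|,|\bv|\le M$ and $\btheta\le M$), and enlarging $R$ so the additive $M$-constants are absorbed into the dominant growth, one converts \eqref{renbo1} into \eqref{normlike}, the prefactor $\tfrac14$ absorbing the constants $2^{1-p},2^{1-q},\tfrac12$. I expect the main obstacle to be the near-diagonal quadratic bound in part (i); its only delicate point is the uniform positivity of $q(U,\bU)$, but since every diagonal point carries $\btheta\ge\delta>0$, the Hessian $\nabla^2\heta-G\cdot\nabla^2A$ is evaluated away from the singular locus $\theta=0$, so no degeneracy arises and the compactness argument closes cleanly.
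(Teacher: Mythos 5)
Your proposal is correct and, in its overall architecture, is the same as the paper's: the same rewriting $I = e(F,\theta)-\btheta\,\eta(F,\theta)-\bar\psi-\bSigma:(F-\bF)+\tfrac12|v-\bv|^2$ drives the far-field bound via \eqref{littlea11} and \eqref{littlea3}, the same far/near splitting at a large radius $R$ handles (ii)--(iv), and (iv) is the same bookkeeping with the triangle inequality in $|F|^p,\theta^q,|v|^2$ (the paper runs the inequality in the opposite direction, bounding $|F-\bF|^p+|\theta-\btheta|^q+|v-\bv|^2\le 2(|F|^p+\theta^q+|v|^2)$ for large $R$, which is equivalent). The one place you genuinely diverge is the near-diagonal quadratic bound in (i): the paper passes to the conserved variables $V=A(U)$, uses uniform convexity of $H(V)$ on the compact set $A(B_R)$ to get $I\ge K_2|A(U)-A(\bU)|^2$, and then converts to $|U-\bU|^2$ via the mean value theorem and invertibility of $\nabla A$; you instead minimize the quotient $q(U,\bU)=I(U|\bU)/|U-\bU|^2$ over the compact region by a continuity/compactness argument. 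Your route works, but as stated it has a small imprecision: $q$ has no single limit at a diagonal point (the limit depends on the direction of approach), so "continuity on the compact set including the diagonal" is not literally available. To close this cleanly you should either argue with a $\liminf$ (every diagonal limit point of $q$ is bounded below by $\tfrac{\delta}{2}$ times the smallest eigenvalue of $\nabla^2\heta-G\cdot\nabla^2A$, which is positive and bounded away from zero on the compact set by \eqref{S4H}) or, more simply, write $\heta(U|\bU)=H(A(U)|A(\bU))$ as a Taylor integral remainder of the convex function $H$, which is exactly the paper's device. This is a presentational fix, not a gap in the argument.
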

\begin{proof}
Fix $p>1$ and $q>1$ and consider $(\bF,\bv,\btheta)\in\Gamma_{M,\delta}$. 
If we select the radius $r= r(M) := M^p+M^q+M^2$, it follows that $\Gamma_{M,\delta} \subset B_r=\{(F,v,\theta):\,\,|F|^p+\theta^q+|v|^2\le r\}$. 
The proof is divided into four steps.

{\bf Step 1.} We rewrite the quantity $I(F,v,\theta|\bF,\bv,\btheta)$ as 
\begin{equation}
I(F,v,\theta|\bF,\bv,\btheta) = e(F,\theta)- \psi( \bF, \btheta) - \psi_F(\bF,\btheta):(F-\bF)-\btheta\eta(F,\theta)+\frac{1}{2}|v-\bv|^2
\end{equation}
and proceed to estimate it  for $(\bF,\bv,\btheta)\in\Gamma_{M,\delta}$. Using \eqref{littlea11} and
Young's inequality  we obtain
$$
\begin{aligned}
I &\ge \min \{ c, \tfrac{1}{2} \} \big ( |F|^p+\theta^q+|v|^2 \big )  - C_1 | \eta(F,\theta) | - C_2 |F| - C_3 |v| - C_4
\\
&\ge K_1   \big ( |F|^p+\theta^q+|v|^2 \big )  -  C_1 | \eta(F,\theta) |  -  C_5
\end{aligned}
$$
where $K_1 = \tfrac{1}{2} \min \{ c, \tfrac{1}{2} \} $. 
Using  next \eqref{littlea3}, we select $R > r(M)+1$ sufficiently large such that 
\begin{equation}
\label{bound5}
I(F,v,\theta|\bF,\bv,\btheta) \ge  \frac{K_1}{2}(|F|^p+\theta^q+|v|^2) 
\end{equation}
for $|F|^p+\theta^q+|v|^2\ge R$ and $ (\bF,\bv,\btheta)\in \Gamma_{M,\delta}$.

In the complementary region $|F|^p+\theta^q+|v|^2\le R$, equivalently $U\in B_R$, we use the expression
\begin{equation}
\begin{aligned}
\frac{1}{\btheta} I(F,v,\theta|\bF,\bv,\btheta)&=\hat{H}(U|\bU)=H(A(U)|A(\bU))\\
&=H(A(U))-H(A(\bU))-H_V(A(\bU))(A(U)-A(\bU))
\end{aligned}
\end{equation}
and recall that $H(V)$ is convex in $V=(F,v,E)^T$ to get
\begin{equation}
\begin{aligned}
I(F,v,\theta|\bF,\bv,\btheta)&=\btheta H(A(U)|A(\bU))\\
&\ge\min_{\substack{\tilde{U}\in B_R\\\delta\le\btheta\le M}       }\left\{\btheta H_{VV}(A(\tilde U))\right\}|A(U)-A(\bU)|^2
=:K_2|A(U)-A(\bU)|^2\,,
\end{aligned}
\end{equation}
where $K_2:=\delta\displaystyle\min_{V\in A(B_R)} H_{VV}(V)>0$. Note that at this point we use the regularity assumptions of $\psi$ and $\eta$ in $(F,\theta)$. Next, we write
\begin{equation}
\begin{aligned}
|U-\bU|&=\Big|\int_0^1\frac{d}{d\tau}\left[ A^{-1}(\tau A(U)+(1-\tau)A(\bU))\right]d\tau\Big|\\
&\le \left|\int_0^1(\nabla_V(A^{-1})(\tau A(U)+(1-\tau)A(\bU))d\tau\right|\,|A(U)-A(\bU)|\le C\,|A(U)-A(\bU)|
\end{aligned}
\end{equation}
where $C:=\displaystyle\sup_{U\in B_R,\,\bU\in\Gamma_{M,\delta}}\left|\int_0^1 (\nabla_V(A^{-1})(\tau A(U)+(1-\tau)A(\bU))d\tau \right|<\infty$. Hence,
\begin{equation}
I(F,v,\theta|\bF,\bv,\btheta)\ge \frac{K_2}{C}|U-\bU|^2\qquad \text{for }U\in B_R.
\end{equation}
Thus the proof of part (i) is complete.

{\bf Step 2.}
Now, using $\eta=-\frac{\partial\psi}{\partial\theta}$ and the expansion
\begin{equation}\label{S4etaexp}
\eta(F,\theta|\bF,\btheta)=\eta(F,\theta)-\eta(\bF,\btheta)-\eta_F(\bF,\btheta):(F-\bF)-\eta_\theta(\bF,\btheta)(\theta-\btheta)\,,
\end{equation}
we have 
$$
\big | \eta(F,\theta|\bF,\btheta) \big |
\le
| \eta( F, \theta) | + C_1 |F| + C_2 \theta + C_3
$$
for all $(\bF, \btheta, \bv) \in \Gamma_{M, \delta}$ with $C_i$ constants depending only on $\Gamma_{M, \delta}$.
It follows by~\eqref{littlea3} that
\begin{equation}\label{S4etaupbd}
\limsup_{|F|^p+\theta^q\to\infty}\frac{ |\eta(F,\theta|\bF,\btheta) |  }{|F|^p+\theta^q}
=\limsup_{|F|^p+\theta^q+ |v|^2 \to\infty}\frac{ |\eta(F,\theta|\bF,\btheta) |  }{|F|^p+\theta^q + |v|^2}=0.
\end{equation}
Using~\eqref{S4etaexp} and~\eqref{S4etaupbd} and selecting $R>r(M)+1$ sufficiently large, there exists $C>0$ such that
\begin{equation}
\label{renbo2}
\begin{aligned}
|\eta(F,\theta|\bF,\btheta)|
&\le
\begin{cases}
C(|F|^p+\theta^q+|v|^2) +\bar C & \text{for }|F|^p+\theta^q+|v|^2\ge R\\
C(|F-\bF|^2+|\theta-\btheta|^2+|v-\bv|^2) &  \text{for }|F|^p+\theta^q+|v|^2< R
\end{cases}
\; ,
\end{aligned}
\end{equation}
for all $(F, v, \theta)$ and for $(\bF,\bv,\btheta)\in\Gamma_{M,\delta}\subset B_r$. 
Note that the same $R$ can be used in both bounds \eqref{renbo1} and \eqref{renbo2} by adjusting the constants in
these bounds. Hence, we conclude
$$
|\eta(F,\theta|\bF,\btheta)| \le C I(F,v,\theta|\bF,\bv,\btheta) \, .
$$

{\bf Step 3.}
Similarly, using $\Sigma=\displaystyle\frac{\partial\psi}{\partial F}$ and the expansion
\begin{equation}\label{S4sigmaexp}
\Sigma(F,\theta|\bF,\btheta)=\Sigma(F,\theta)-\Sigma(\bF,\btheta)-\psi_F(\bF,\btheta):(F-\bF)-\psi_\theta(\bF,\btheta)(\theta-\btheta)\,,
\end{equation}
it follows by~\eqref{littlea2}
\begin{equation}\label{S4sigmaupbd}
\limsup_{|F|^p+\theta^q\to\infty}\frac{\Sigma(F,\theta|\bF,\btheta)}{|F|^p+\theta^q}=0.
\end{equation}
and we proceed as in Step $2$ to prove part (iii).

{\bf Step 4.}  To show (iv), recall that $(\bF,\bv,\btheta)\in\Gamma_{M,\delta}\subset B_r$. We then have
$$
\begin{aligned}
|F - \bF|^p + |\theta - \btheta|^q + |v - \bv|^2 \le ( | F|  + M)^p + ( \theta + M )^q + ( |v| + M )^2
\end{aligned}
$$
Since
$$
\limsup_{|F|^p+\theta^q+|v|^2\to\infty} \frac{ ( | F|  + M)^p + ( \theta + M )^q + ( |v| + M )^2 }{ |F|^p+\theta^q+|v|^2} = 1
$$
we may select $R$ such that 
\begin{equation}
\label{bound12}
|F - \bF|^p + |\theta - \btheta|^q + |v - \bv|^2 \le 2 \big ( |F|^p+\theta^q+|v|^2 \big ) \quad \text{for } \; |F|^p+\theta^q+|v|^2 > R
\end{equation}
Equation \eqref{normlike} follows by combining \eqref{bound5} with \eqref{bound12} and the proof of Step 1.
\end{proof}

Now, we state and prove a convergence result recovering the smooth solution $\bU$ of thermoelastic nonconductors of heat from solutions of~\eqref{thermov2} as $\mu,\,k\to0+$. From here  on, we denote the solution to the system of thermoviscoelasticity by $U^{\mu,k}=(F,v,\theta)^T$ to give an emphasis on the dependence of the solution on the functions $\mu = \mu(F,\theta)$ and $k=k(F,\theta)$.
We note that the dependence of the solution $U^{\mu,k}=(F,v,\theta)^T$ on $\mu$ and $k$ will be specified on the state vector $U^{\mu,k}$ and for convenience we drop it from the components $F$, $v$ and $\theta$. As before, to avoid technicalities, we work for now in the spatially periodic case with domain $Q_T={\To}^d\times[0,T)$ for $T\in[0,\infty)$.

\begin{theorem}\label{thmconvnonconductors}
Let $U^{\mu,k}$ be a strong solution of the system of thermoviscoelasticity~\eqref{thermov2} 
satisfying the constitutive relations~\eqref{thermovcr}, \eqref{thermovcr2} with
$$Z=\mu(F,\theta)\nabla v,\qquad Q=k(F,\theta)\nabla \theta \, , $$
defined on a maximal domain of existence $Q_{T^*}$,
and let $\bar U$ be a smooth solution to the system of thermoelastic nonconductors of heat \eqref{athermoel} 
defined on $\overline{Q_T}$, $0<T<T^*$ and emanating from initial data $U_0^{\mu,k}$ and $\bU_0$, respectively. 
Assume that Hypotheses~\eqref{S4H},~\eqref{littlea11},~\eqref{littlea2} and~\eqref{littlea3} hold true and suppose that $\bar U\in \Gamma_{M,\delta}$ for some constants $M>0$ and $\delta>0$. Then there exists a constant $C=C(T)$ such that for $t\in(0,T)$,
\begin{equation}\label{Imk}
\int I (U^{\mu,k}(t)|\bU(t))dx\le C\left( \int I(U^{\mu,k}_0|\bU_0)dx+\int_0^T\int  \mu\frac{\theta(s)}{\btheta(s)}|\nabla\bv(s)|^2+ k\frac{|\nabla\btheta(s)|^2}{\btheta(s)}dx\,ds  
\right)\,.
\end{equation}
Moreover, if $ \int_{\To^d} \theta(t)\,dx$ is uniformly bounded for $t\in[0,T)$ and
\begin{equation}\label{m-klimit} 
|\mu(F,\theta)|\le\mu_0,\qquad |k(F,\theta)|\le k_0\,\theta\,,
\tag{\text{H}$_{\mu,k}$}
\end{equation}
 then for every data satisfying $\displaystyle \lim_{\substack{\mu_0\to0+\\ k_o\to0+}}\int I(U^{\mu,k}_0|\bU_0)dx= 0$, it follows
\begin{equation}\label{S4convergence}
\sup_{t\in(0,T)}\int I (U^{\mu,k}(t)|\bU(t))dx\to 0\qquad\text{as  }\mu_0,k_0\to0+\,.
\end{equation}
\end{theorem}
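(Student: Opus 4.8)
The plan is to integrate the relative entropy identity \eqref{relenthermo0} in space and close a Gronwall estimate, with Lemma \ref{LemmaI} controlling the purely thermoelastic error terms and a weighted Young inequality letting the dissipation swallow the cross terms generated by viscosity and heat conduction. First I would integrate \eqref{relenthermo0} over the periodic domain $\To^d$, so that the divergence term drops out. Since the quantity under $\del_t$ is exactly $I(U^{\mu,k}|\bU)$, writing $\varphi(t)=\int_{\To^d} I(U^{\mu,k}(t)|\bU(t))\,dx$ gives
\begin{equation*}
\begin{aligned}
\frac{d\varphi}{dt} &= \int\big[-\btheta_t\,\eta(F,\theta|\bF,\btheta) + \bF_t:\Sigma(F,\theta|\bF,\btheta)\big]\,dx\\
&\quad -\int\btheta\mu\frac{|\nabla v|^2}{\theta}\,dx - \int\btheta k\frac{|\nabla\theta|^2}{\theta^2}\,dx + \int\mu\,\nabla\bv\cdot\nabla v\,dx + \int k\,\frac{\nabla\btheta\cdot\nabla\theta}{\theta}\,dx.
\end{aligned}
\end{equation*}
The first bracket is thermoelastic: by Lemma \ref{LemmaI}(ii),(iii), $|\eta(F,\theta|\bF,\btheta)|\le C\,I$ and $|\Sigma(F,\theta|\bF,\btheta)|\le C\,I$, and since $\bU$ is smooth the quantities $\sup|\btheta_t|$ and $\sup|\bF_t|$ are finite on $[0,T]$, so this bracket is bounded by $C\,\varphi(t)$.

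The heart of the estimate is the two cross terms, which involve the uncontrolled gradients $\nabla v$, $\nabla\theta$ of the viscous solution. I would split each by a weighted Young inequality so that half of the matching dissipation term absorbs it:
\begin{equation*}
\begin{aligned}
\mu\,\nabla\bv\cdot\nabla v &\le \tfrac12\,\frac{\btheta\mu}{\theta}|\nabla v|^2 + \tfrac12\,\frac{\mu\theta}{\btheta}|\nabla\bv|^2,\\
k\,\frac{\nabla\btheta\cdot\nabla\theta}{\theta} &\le \tfrac12\,\frac{\btheta k}{\theta^2}|\nabla\theta|^2 + \tfrac12\,\frac{k}{\btheta}|\nabla\btheta|^2.
\end{aligned}
\end{equation*}
The first summand in each line is absorbed into $-\btheta\mu\frac{|\nabla v|^2}{\theta}$ and $-\btheta k\frac{|\nabla\theta|^2}{\theta^2}$ respectively, leaving a net nonpositive dissipation that I simply discard; the second summand depends only on the smooth limit solution $\bU$ and survives as a source. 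This yields the differential inequality
\begin{equation*}
\frac{d\varphi}{dt}\le C\,\varphi(t) + \tfrac12\int\Big(\frac{\mu\theta}{\btheta}|\nabla\bv|^2 + \frac{k}{\btheta}|\nabla\btheta|^2\Big)\,dx,
\end{equation*}
and Gronwall's lemma over $[0,t]\subset[0,T]$ delivers \eqref{Imk}, with the factor $\tfrac12$ absorbed into $C$.

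For the convergence \eqref{S4convergence} I would show that the source integral on the right of \eqref{Imk} vanishes as $\mu_0,k_0\to0+$. Using $|\mu|\le\mu_0$ and $|k|\le k_0\theta$ from \eqref{m-klimit}, the lower bound $\btheta\ge\delta$, and the boundedness of $|\nabla\bv|$, $|\nabla\btheta|$ (smoothness of $\bU$),
\begin{equation*}
\begin{aligned}
\int_0^T\!\!\int\mu\frac{\theta}{\btheta}|\nabla\bv|^2\,dx\,ds &\le \frac{\mu_0}{\delta}\,\sup|\nabla\bv|^2\int_0^T\!\!\int\theta\,dx\,ds,\\
\int_0^T\!\!\int k\frac{|\nabla\btheta|^2}{\btheta}\,dx\,ds &\le \frac{k_0}{\delta}\,\sup|\nabla\btheta|^2\int_0^T\!\!\int\theta\,dx\,ds.
\end{aligned}
\end{equation*}
Since $\int_{\To^d}\theta(t)\,dx$ is assumed uniformly bounded on $[0,T)$, these are $O(\mu_0)$ and $O(k_0)$. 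Combined with the hypothesis $\int I(U^{\mu,k}_0|\bU_0)\,dx\to0$, the bound \eqref{Imk} forces $\sup_{t\in(0,T)}\varphi(t)\to0$.

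The main obstacle is the cross-term absorption: the Young weights must be chosen so that, after the dissipation consumes the $\nabla v$, $\nabla\theta$ contributions, the leftover source involves the viscous solution only through $\theta$ to the \emph{first} power. This first-power appearance is precisely what makes the $L^1$-in-space control of $\theta$ sufficient, and it is what dictates the $\theta$-weighted scaling $|k|\le k_0\theta$ in \eqref{m-klimit}; a naive bound $|k|\le k_0$ would instead demand control of $\int\theta^2$, which is not available at this level of a priori regularity.
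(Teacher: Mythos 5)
Your proposal is correct and follows essentially the same route as the paper: integrate \eqref{relenthermo0} over the torus, control the thermoelastic terms by $C\varphi(t)$ via Lemma \ref{LemmaI}(ii),(iii), absorb the viscous and heat-conduction cross terms into half the dissipation by a weighted Young inequality (the paper does Cauchy--Schwarz in the integral first and then Young, which yields the identical differential inequality), and close with Gronwall; the limit \eqref{S4convergence} is then obtained exactly as you describe from \eqref{m-klimit}, $\btheta\ge\delta$, and the uniform $L^1$ bound on $\theta$. The only difference is the cosmetic ordering of Cauchy--Schwarz versus pointwise Young, so no further comment is needed.
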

\begin{proof}
Integrating  the relative entropy identity~\eqref{relenthermo0} and combining with estimates~\eqref{etaboundthermov} and~~\eqref{sigmaboundthermov} of Lemma~\ref{LemmaI}, we get
\begin{equation}
\begin{aligned}
\frac{d}{dt}&\int I(F,v,\theta|\bF,\bv,\btheta)\,dx+\int\btheta\,\left(\mu\frac{|\nabla v|^2}{\theta}+\,k\,\frac{|\nabla\theta|^2}{\theta^2}\right)\,dx\le\\
&\le \int|\btheta_t||\eta(F,\theta|\bF,\btheta)|+|\bF_t|\,|\Sigma(F,\theta|\bF,\btheta)|\,dx+\int\mu|\nabla \bv|\,|\nabla v|+k\frac{|\nabla\btheta|\,|\nabla\theta|}{\theta}dx\\
&\le C\int|I(F,v,\theta|\bF,\bv,\btheta)|dx+\left(\int\btheta\mu\frac{|\nabla v|^2}{\theta}dx\right)^{1/2}\left(\int\mu\frac{\theta}{\btheta}|\nabla\bv|^2  dx\right)^{1/2}\\
&\qquad+\left(\int\btheta\,k\frac{|\nabla \theta|^2}{\theta^2}\right)^{1/2}\left(\int k\frac{|\nabla\btheta|^2}{\btheta}dx\right)^{1/2}\\
&\le C\int|I(F,v,\theta|\bF,\bv,\btheta)|dx+\tfrac{1}{2}\int\btheta\,\left(\mu\frac{|\nabla v|^2}{\theta}+\,k\,\frac{|\nabla\theta|^2}{\theta^2}\right)\,dx
+\tfrac{1}{2}\int  \mu\frac{\theta}{\btheta}|\nabla\bv|^2+ k\btheta\frac{|\nabla\btheta|^2}{\btheta^2}dx
\end{aligned}
\end{equation} 
for some constant $C=C(|\btheta_t|,|\bF_t|)$. Hence, Gronwall's inequality gives
\begin{equation}\label{Iestimatemk}
\begin{aligned}
\int I (U^{\mu,k}(t)|\bU(t))dx&\le e^{Ct} \int \psi (F_0 , \theta_0 | \bF_0 , \btheta_0 ) +\tfrac{1}{2} |v_0  - \bv_0 |^2 + ( \eta (F_0 ,\theta_0) - \eta (\bF_0 , \btheta_0 ))(\theta_0  - \btheta_0 )dx\\
&+ \tfrac{1}{2} \int_0^t e^{C(t-s)}\left[\int \mu\frac{\theta(s)}{\btheta(s)}|\nabla\bv(s)|^2+ k\frac{|\nabla\btheta(s)|^2}{\btheta(s)}dx\right] ds 
\end{aligned}
\end{equation} 
and~\eqref{Imk} follows. Last, if $\|\theta(t)\|_{L^1({\To}^d)}\le K$ for all $t\in[0,T]$ and~\eqref{m-klimit} is satisfied then, taking the limit in~\eqref{Iestimatemk} as $\mu_0\to0+$ and $k_0\to0+$,~\eqref{S4convergence} follows. 
\end{proof}

\begin{remark} \rm
The uniform estimate $\int\theta(t)\,dx\le K$ is expected for solutions of system~\eqref{thermov}. Uniform energy estimates 
are obtained by integrating the energy equation \eqref{thermov}$_3$ which, for periodic boundary conditions and
for $r = 0$, $f=0$, gives
$$
\int_{\To^d} \frac{1}{2} |v|^2 + e (F, \theta)  dx \le C\;.
$$
The $L^1$ estimate on the temperature is then a consequence of \eqref{littlea11}.
Hence, Hypothesis~\eqref{m-klimit} suffices to provide the convergence \eqref{S4convergence}.  Hypothesis ~\eqref{m-klimit} may
be weakened  if the growth assumption \eqref{littlea11} provides higher integrability for the temperature $\theta$.
\end{remark}

\subsection{Uniqueness of smooth solutions in the class of entropic measure-valued solutions}\label{S4.uni}
In this section, we consider the system of  adiabatic thermoelasticity,
\begin{equation}
\label{thermoad}
\begin{aligned}
F_t &= \nabla v
\\
v_t  &= \div \Sigma 
\\
\del_t ( \tfrac{1}{2} |v|^2 + e ) &= \div ( v \cdot  \Sigma)  + r
\end{aligned}
\end{equation}
subject to the entropy inequality for weak solutions
\begin{equation}
\label{thermocd}
\del_t \eta \ge \frac{r}{\theta}
\end{equation}
under the constitutive theory~\eqref{thermovcr}
\begin{equation*}
\begin{aligned}
\Sigma =  \; \frac{\del \psi}{\del F} (F, \theta) \,, \quad \eta = - \frac{\del \psi}{\del \theta} (F, \theta) \,, \quad
e = \psi + \theta \eta 
\end{aligned}
\end{equation*}
for some Helmhotz free energy function $\psi = \psi (F, \theta)$. The system \eqref{thermoad} is satisfied by the class of materials 
termed thermoelastic non-conductors of heat, a particular subclass of which is the gas dynamics equations in Lagrangean coordinates.

In Section \ref{S4.1} we showed that  a weak solution $U = (F, v, \theta)$ of \eqref{thermoad} satisfying the entropy inequality \eqref{thermocd}
and $\bU = (\bF, \bv, \btheta)$ and a strong solution to~\eqref{thermoad}, which necessarily satisfies the entropy identity
\begin{equation}
\label{eq10}
\del_t \eta (\bF, \btheta ) = \frac{\br}{\btheta} \, ,
\end{equation}
can be compared via the relative entropy inequality
\begin{equation}
\label{relenthermoad}
\begin{aligned}
&\del_t \Big ( I(F,v,\theta|\bF,\bv,\btheta) \Big )
-  \div \Big ( (v - \bv) \cdot ( \Sigma - \bSigma )    \Big ) \le  - \btheta_t \,  \eta(F, \theta | \bF, \btheta) + \bF_t : \Sigma (F, \theta | \bF, \btheta)\;.
\end{aligned}
\end{equation}
In this section we establish an analog of \eqref{relenthermoad} valid for entropic measure-valued solutions and eventually establish the
uniqueness of \emph{classical} solutions in the class of \emph{dissipative measure--valued} solutions for the equations of adiabatic thermoelasticity  \eqref{thermoad}. 
This theory will be the analog of the general theory in Section~\ref{S2.2.2} when specified
to \eqref{thermoad}. However, there are some important differences with the general case that have to do with the
treatment of concentrations, and Theorem~\ref{thmweakstrong} does not apply directly and has to be adapted.

\subsubsection{Entropic-mv solutions for adiabatic thermoelasticity}\label{S4.3.1}

An entropic measure-valued (mv) solution for \eqref{thermoad} consists of a Young measure $\bonu =({\bonu}_{x,t})_{\{(x,t)\in \bar{Q}_T\}}$
a non-negative Radon measure $\bomu \in\mathcal{M}^+(Q_T)$ describing concentrations and functions $(F, v, \theta)$,
$$
F =  \langle \bonu_{(x,t)} , \lambda_F \rangle  \, , \quad v =  \langle \bonu_{(x,t)} , \lambda_v  \rangle \, , \quad
\theta =  \langle \bonu_{(x,t)}  ,  \lambda_\theta \rangle
$$
with $F \in L^\infty ( L^p )$, $v \in L^\infty ( L^2 )$, $\theta \in L^\infty ( L^q )$ that satisfies in the sense of distributions the averaged equations
\begin{equation}
\label{defemv}
\begin{aligned}
\del_t \langle \bonu, \lambda_F \rangle &= \nabla \langle  \bonu,  \lambda_v \rangle 
\\
\del_t  \langle  \bonu,  \lambda_v \rangle  &= \div \langle \bonu , \Sigma (\lambda_F , \lambda_\theta ) \rangle
\\
\del_t \Big ( \big\langle \bonu , \tfrac{1}{2} | \lambda_v |^2 + e (\lambda_F , \lambda_\theta)  \big\rangle  + \boldsymbol{\mu}
 \Big ) 
&= \div \langle \bonu , \lambda_v \cdot \Sigma (\lambda_F , \lambda_\theta ) \rangle 
+ \langle \bonu, r \rangle
\end{aligned}
\end{equation}
and the averaged form of the entropy production equation
\begin{equation}
\label{defcdmv}
\del_t  \langle \bonu , \eta (\lambda_F , \lambda_\theta ) \ge \langle \bonu , \frac{r}{\lambda_\theta} \rangle\,.
\end{equation}

Some justification of the above definition is needed: Typically mv-solutions of \eqref{thermoad} will appear as limits of some
approximating problem like the system of thermoviscoelasticity. The natural  available bounds 
are provided by the energy conservation equation (given some mild hypothesis on
the energy radiation term $r$ which for simplicity is assumed here as a given bounded function). It leads to the bound
\begin{equation}
\label{avbound}
\int_{\To^d} e(F^\eps , \theta^\eps  ) + \tfrac{1}{2} |v^\eps|^2 dx \le C \, ,
\end{equation}
where $\eps$ stands for the approximation parameter. Under the growth assumption \eqref{littlea11}, the uniform bound \eqref{avbound} in turn yields that
$F^\eps$ is uniformly bounded in $L^p$, $v^\eps$ in $L^2$ and $\theta^\eps$ in $L^q$, with $p , \, q > 1$. The family generates  (along subsequences)
a Young measure $\nu = \nu_{(t,x)}$ that characterizes the weak limits  ({\it e.g.}~\cite{tartar79,ball88}). The action of the Young measure is well defined 
for functions that grow slower than the energy and characterizes their weak limits:
\begin{equation}
\label{defclasym}
\begin{aligned}
&\qquad \qquad \text{wk}-\lim  f( F^\eps , v^\eps , \theta^\eps) = \langle \bonu , f(\lambda_F , \lambda_v , \lambda_\theta ) \rangle  \quad
\\
 \forall  \;  &\mbox{continuous $f$ such that} \;   \lim_{|\lambda_F|^p + |\lambda_v|^2+ (\lambda_\theta)^q \to \infty} \frac { | f(\lambda_F , \lambda_v , \lambda_\theta ) | }{ |\lambda_F|^p + |\lambda_v|^2+ (\lambda_\theta)^q} = 0\,.
 \end{aligned}
\end{equation}

We impose the growth assumptions \eqref{littlea2} on the stress, \eqref{littlea3} on the entropy and in addition the growth restriction
\begin{equation}
\label{littlea4}
    \lim_{ |F|^p + \theta^q  + |v^2| \to \infty}  \frac{ | v \cdot \Sigma(F,\theta)| }{ |F|^p + \theta^q  + |v|^2  }  = 0  \, ,
    \tag{a$_4$}
\end{equation}
on the power of the stresses. With these restrictions all the actions of Young measures appearing in \eqref{defemv} and \eqref{defcdmv} are well defined,
except for that on the total energy.

Classical Young measures do not suffice to characterize the weak limit of $e(F^\eps, \theta^\eps) + \tfrac{1}{2} |v^\eps|^2$ 
due to the appearance of concentrations. This problem is undertaken by DiPerna and Majda \cite{dm87} and leads to the introduction of  generalized
Young measures with concentrations; a general representation theorem is obtained by using the recession function, see Alibert-Bouchitt\'{e}  \cite{ab97}.
Let $\Omega$ be an open subset of $\R^n$ and $\{ u_n\}$ a bounded sequence in $L^1 (\Omega ; \R^n)$. The goal in \cite{dm87,ab97} is to represent
weak limits of the form
$$
\lim_{n \to \infty}  \int_\Omega \varphi (y) g ( u_n (y) ) \, dy
$$
for $\varphi \in C_0 (\Omega)$ and for test functions $g$ of the form
$$
g(\xi) = \bar g (\xi) ( 1 + |\xi|) \quad \mbox{for some $\bar g \in BC(\R^n)$},
$$
where $BC(\R^n)$ denotes the bounded continuous functions on $\R^n$. 
We list below the representation result and refer to \cite{dm87,ab97}  for details and to  \cite{bds11} for a quick presentation
that can serve as an introduction to the subject. Define
\begin{equation}
\label{hyprec}
\begin{aligned}
\cF_0 &= \{ h \in BC (\R^n) : \quad h^\infty (\xi) = \lim_{s \to \infty} h(s \xi) \; \mbox{exists and is continuous on $\cS^{n-1}$} \; \}
\\[2pt]
\cF_1 &= \{ \; \; g \in C(\R^n) : \quad g(\xi ) = h(\xi ) ( 1 + |\xi|) \; \mbox{for  $h \in \cF_0$} \; \}
\end{aligned}
 \tag{H$_{rec}$}
\end{equation}
Given $X$ a locally, compact Hausdorff space, let $\cM (X)$ denote the Radon measures on $X$, $\cM_+ (X)$ the positive Radon measures, and 
$\Prob (X)$ the probability measures. Given a Radon measure $\lambda$ on $\Omega$ set $\cP (\lambda ; X) = L^\infty_w (d\lambda ; \Prob (X) )$
be the parametrized families of probability measures $( \bonu_y)_{y \in \Omega}$  acting on $X$ which are weakly measurable on the parameter $y \in \Omega$.
When $\lambda$ is the Lebesgue measure we denote $\cP (\lambda ; X) = \cP (\Omega ; X)$.

\begin{theorem}[{\sc DiPerna and Majda \cite{dm87}, Alibert and Bouchitt\'{e} \cite{ab97}}]\label{abthm}
Let $\{ u_n \}$ be bounded in $L^1 (\Omega ; \R^n)$. There exists a subsequence $\{ u_{n_k} \}$, a nonnegative Radon measure 
$\mu \in \cM_+ (\Omega)$ and parametrized families of probability measures $\bonu \in \cP (\Omega ; \R^n)$ and
$\bonu^\infty \in \cP( \lambda ; \cS^{n-1} )$ such that
$$
g ( u_{n_k} ) \rightharpoonup \langle \bonu , g \rangle + \langle \bonu^\infty , g^\infty \rangle \, \mu \quad \mbox{ weak-$\ast$ in $\cM (\Omega)$}
$$
for $g \in \cF_1$.
\end{theorem}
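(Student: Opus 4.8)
The plan is to convert this non-compact representation problem into a routine weak-$*$ compactness argument by compactifying the target $\R^n$ and absorbing the linear growth of the test functions into a weight. First I would fix the one-point-per-direction compactification $\overline{\R^n}=\R^n\cup\cS^{n-1}$, homeomorphic to the closed unit ball via $\xi\mapsto\xi/(1+|\xi|)$, whose boundary sphere records directions at infinity. The defining property of $\cF_0$ is precisely that each $h\in\cF_0$ extends to $\tilde h\in C(\overline{\R^n})$ with $\tilde h|_{\cS^{n-1}}=h^\infty$; and each $g\in\cF_1$ factors as $g=h\,(1+|\xi|)$ with $h\in\cF_0$, whose recession function satisfies $g^\infty(\omega)=h^\infty(\omega)$ on $\cS^{n-1}$. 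This is the structural reason the hypotheses in \eqref{hyprec} are imposed.

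Next I would lift the sequence to measures on the compact product $\overline\Omega\times\overline{\R^n}$ (with $\Omega$ bounded, as in the application) carrying the weight $1+|u_n|$:
\[
\langle\sigma_n,\Phi\rangle:=\int_\Omega \Phi\big(y,u_n(y)\big)\,(1+|u_n(y)|)\,dy,\qquad \Phi\in C(\overline\Omega\times\overline{\R^n}).
\]
Since $\{u_n\}$ is bounded in $L^1(\Omega;\R^n)$, the total masses $\|\sigma_n\|=|\Omega|+\|u_n\|_{L^1}$ are uniformly bounded and $\sigma_n\ge0$; by separability of $C(\overline\Omega\times\overline{\R^n})$ and Banach--Alaoglu a subsequence satisfies $\sigma_{n_k}\rightharpoonup\sigma\ge0$ weak-$*$ in $\cM(\overline\Omega\times\overline{\R^n})$. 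Testing against $\Phi(y,\xi)=\varphi(y)\tilde h(\xi)$, and using $g(u_{n_k})=\tilde h(u_{n_k})(1+|u_{n_k}|)$ on the finite part, yields the fundamental limit
\[
\int_\Omega\varphi(y)\,g(u_{n_k}(y))\,dy\;\longrightarrow\;\langle\sigma,\varphi\otimes\tilde h\rangle\qquad\forall\,\varphi\in C(\overline\Omega),\ g\in\cF_1 .
\]

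I would then disintegrate $\sigma$. Let $\Lambda:=\pi_\#\sigma\in\cM_+(\overline\Omega)$ be its projection and write $\sigma=\int_{\overline\Omega}\tilde\nu_y\,d\Lambda(y)$ with $\tilde\nu_y\in\Prob(\overline{\R^n})$ weakly measurable. Splitting $\overline{\R^n}=\R^n\cup\cS^{n-1}$ gives $\langle\tilde\nu_y,\tilde h\rangle=\int_{\R^n}h\,d\tilde\nu_y+\int_{\cS^{n-1}}h^\infty\,d\tilde\nu_y$. For the concentration term I would set $\mu:=\tilde\nu_y(\cS^{n-1})\,d\Lambda(y)\in\cM_+(\overline\Omega)$ and define $\bonu^\infty_y\in\Prob(\cS^{n-1})$ as the normalized restriction $\tilde\nu_y|_{\cS^{n-1}}/\tilde\nu_y(\cS^{n-1})$ (chosen arbitrarily where this mass vanishes), so that $\int_{\cS^{n-1}}h^\infty\,d\tilde\nu_y\,d\Lambda=\langle\bonu^\infty_y,g^\infty\rangle\,\mu$, using $g^\infty|_{\cS^{n-1}}=h^\infty$. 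For the oscillation term I would take the Lebesgue decomposition $\Lambda=\varrho\,dy+\Lambda_s$ and reweight the finite part by setting $d\bonu_y(\xi):=\tfrac{\varrho(y)}{1+|\xi|}\,d\tilde\nu_y(\xi)$ on $\R^n$, giving $\int_{\R^n}h\,d\tilde\nu_y\,d\Lambda=\langle\bonu_y,g\rangle\,dy$; testing with $g\equiv1$ (so $h=(1+|\xi|)^{-1}$, $h^\infty=0$) identifies $\big[\int_{\R^n}(1+|\xi|)^{-1}d\tilde\nu_y\big]d\Lambda$ with Lebesgue measure and forces $\langle\bonu_y,1\rangle=1$ a.e., so $\bonu\in\cP(\Omega;\R^n)$ is a genuine Young measure. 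Assembling the two pieces reproduces $g(u_{n_k})\rightharpoonup\langle\bonu,g\rangle+\langle\bonu^\infty,g^\infty\rangle\,\mu$.

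The main obstacle is the final bookkeeping: one must show that $\tilde\nu_y$ concentrates on $\cS^{n-1}$ for $\Lambda_s$-a.e.\ $y$, so that the singular part $\Lambda_s$ contributes nothing to the finite integral $\int_{\R^n}h\,d\tilde\nu_y$ and is absorbed, together with the sphere-mass of the absolutely continuous part, entirely into $(\mu,\bonu^\infty)$ without corrupting the Lebesgue-parametrized $\bonu$. This is exactly where the weight $1+|\xi|$ is essential: mass of $u_n$ escaping to infinity on a shrinking set forces $\sigma_n$ to place proportionally large weight near the boundary sphere, pinning the escaping mass onto $\cS^{n-1}$ in the limit. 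Verifying this clean separation of oscillation and concentration, along with the weak measurability of $y\mapsto\bonu_y$ and $y\mapsto\bonu^\infty_y$, is the technical heart; the disintegration and weak-$*$ sequential compactness are routine given compactness of $\overline\Omega\times\overline{\R^n}$. I refer to \cite{dm87,ab97} for the complete argument.
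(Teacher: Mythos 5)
The paper itself offers no proof of this statement: Theorem \ref{abthm} is quoted verbatim from DiPerna--Majda and Alibert--Bouchitt\'e, with the reader referred to \cite{dm87,ab97} (and to \cite{bds11} for an exposition), so there is no internal argument to compare yours against. That said, your outline is the standard and correct route to this result: compactify $\R^n$ to the closed ball so that $\cF_0$ becomes exactly $C(\overline{\R^n})$, lift $u_n$ to the nonnegative measures $\sigma_n$ on $\overline\Omega\times\overline{\R^n}$ weighted by $1+|u_n|$, use the $L^1$ bound for weak-$*$ compactness, disintegrate the limit $\sigma$ over its projection $\Lambda$, and read off $\bonu$ from the interior part and $(\mu,\bonu^\infty)$ from the boundary-sphere part. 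One remark: the step you single out as the ``main obstacle'' --- that $\tilde\nu_y$ charges only $\cS^{n-1}$ for $\Lambda_s$-a.e.\ $y$ --- is in fact already delivered by your own $g\equiv1$ computation. Testing with $h(\xi)=(1+|\xi|)^{-1}$, $h^\infty=0$, identifies the measure $\bigl[\int_{\R^n}(1+|\xi|)^{-1}\,d\tilde\nu_y\bigr]\,d\Lambda(y)$ with Lebesgue measure on $\Omega$; by uniqueness of the Lebesgue decomposition its $\Lambda_s$-part must vanish, and since $(1+|\xi|)^{-1}>0$ on all of $\R^n$ this forces $\tilde\nu_y(\R^n)=0$ for $\Lambda_s$-a.e.\ $y$, which is precisely the clean separation of oscillation from concentration you were worried about. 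With that observation, the only pieces genuinely left to the references are routine (disintegration of measures and weak measurability of $y\mapsto\bonu_y$, $y\mapsto\bonu^\infty_y$), so your proposal is a faithful sketch of the actual proof rather than a gap-ridden one.
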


The theorem is applied to represent the weak limits $\text{wk}-\lim  f( F^\eps , v^\eps , \theta^\eps)$, where  the family $(F^\eps, v^\eps, \theta^\eps)$ 
satisfies the uniform bound \eqref{avbound}, $e(F,\theta)$ grows according to  \eqref{littlea11}, and $f$ is  a
continuous test function with growth
$$
|f (F, v, \theta) | \le  C (1 + |F|^p + |v|^2 + \theta^q)  \qquad F \in \R^{d \times d} \, , \; v \in \R^d \, , \; \theta \in \R^+ \, .
$$
To this end apply the change of variables $(A, b, c) = ( |F|^{p-1}  F , |v| v ,  \theta^q ) \in \R^{d \times d} \times  \R^d \times  \R^+ $ to the test function $f$ and define 
$$
f(F,  v , \theta ) = :  g ( |F|^{p-1}  F , |v| v  , \theta^q )
$$
The test function $g(A, b, c)$ satisfies the growth hypothesis $|g(A, b, c)| \le C (1 + |A| + |b| + c))$ and Theorem \ref{abthm} is used to represent the weak-$\ast$ limits
for test functions $g$  satisfying  \eqref{hyprec}:
$$
g^\infty ( A, b, c) = \lim_{s \to \infty} \frac{g( s A, sb, sc)}{1 + s (|A| + |b| + c)}  \quad \mbox{exists and is continuous for $(A, b, c) \in \cS^{d^2 + d} \cap \{c > 0\}$ } \, .
$$
There are probability measures $N_{(t,x)} \in \cP ( \barQT ; \R^{d^2 + d+1})$, $N^\infty_{(t,x)} \in \cP ( \barQT ; \cS^{d^2 + d})$ and a positive
Borel measure $M \in \cM^+(\barQT )$ such that
$$
g(A_n , b_n , c_n ) \rightharpoonup \langle N , g(\lambda_A , \lambda_b , \lambda_c )  \rangle 
+ \big\langle N^\infty , g^\infty (\lambda_A , \lambda_b , \lambda_c )  \big\rangle  M \, .
$$
This in turn implies
\begin{equation}
\label{wkrepym}
f(F_n , v_n , \theta_n ) \rightharpoonup  \langle \bonu , f(\lambda_F , \lambda_v , \lambda_\theta )  \rangle 
+ \big\langle \bonu^\infty , f^\infty (\lambda_F , \lambda_v , \lambda_\theta )  \big\rangle  M
\end{equation}
where $\bonu_{(t,x)}$ and $\bonu^\infty_{(t,x)}$ are defined via
\begin{equation}
\label{defgenym}
\begin{aligned}
\langle \bonu , f( \lambda_F, \lambda_v , \lambda_\theta) \rangle 
&= \langle N , g(  |\lambda_F|^{p-1}\lambda_F  , |\lambda_v| \lambda_v , ( \lambda_\theta )^q )  \rangle 
\\
\langle \bonu^\infty , f^\infty ( \lambda_F, \lambda_v , \lambda_\theta) \rangle 
&= \langle N , g^\infty (  |\lambda_F|^{p-1}\lambda_F  , |\lambda_v| \lambda_v , ( \lambda_\theta )^q )  \rangle 
\end{aligned}
\end{equation}

Formulas \eqref{wkrepym}, \eqref{defgenym} are applied to represent the weak limit of the total energy 
 $e(F,\theta) + \tfrac{1}{2} |v^2|$.  It is necessary to assume that the recession function
\begin{equation}
\label{littlea5}
\begin{aligned}
&\big ( e(F, \theta) + \frac{1}{2} | v|^2 \big )^\infty  := 
\lim_{s \to \infty} 
\frac{ e \left (  s^{\tfrac{1}{p}} F,  s^{\tfrac{1}{q}}  \theta \right )  + \frac{1}{2} s |v|^2 }{ 1 +   s ( |F|^p  + \theta^q + |v|^2 )} 
\\[2pt]
&\qquad \mbox{ exists and is continuous for $( |F|^{p-1}  F , |v| v ,  \theta^q )  \in  \cS^{d^2 + d} \cap \{c > 0\}$}
\end{aligned}
 \tag{a$_5$}
\end{equation}
 and the theorem gives that along a subsequence
\begin{equation}
\label{repformula}
\mbox{wk-$\ast$-lim} \Big ( e(F^\eps , \theta^\eps) + \frac{1}{2} |v^\eps|^2 \Big ) 
= \langle \bonu , e(\lambda_F, \lambda_\theta) + \frac{1}{2} |\lambda_v|^2  \rangle 
+ \big\langle \bonu^\infty ,  \big ( e(\lambda_F, \lambda_\theta) +  \frac{1}{2} |\lambda_v|^2 \big )^\infty \big\rangle \, M
\end{equation}
Due to the hypothesis \eqref{littlea11}, we have $\big ( e(\lambda_F, \lambda_\theta) +  \frac{1}{2} |\lambda_v|^2 \big )^\infty > 0$ and thus
$$
\boldsymbol{\mu} := \big\langle \bonu^\infty ,  \big ( e(\lambda_F, \lambda_\theta) +  \frac{1}{2} |\lambda_v|^2 \big )^\infty \big\rangle \, M  \, \in  \, \cM_+ (\barQT) \, .
$$
The representation formula \eqref{repformula} justifies the nature of the definition of entropic-mv solutions for the equations of
adiabatic thermoelasticity, as it pertains to the format of the energy conservation equation \eqref{defemv}$_3$.

\subsubsection{The averaged relative entropy inequality for adiabatic thermoelasticity}
The objective is to  compare an entropic-mv solution with the strong solution $(\bF, \bv, \btheta)$. Motivated by 
 \eqref{defrelenmd} and \eqref{dfH}, we define the averaged relative entropy
\begin{equation}\label{S4dfH}
\begin{aligned}
\mathcal{H}(\bonu ,U,\bU)=
- \langle \bonu ,\eta\rangle  +\bar{\eta}
-\frac{\bSigma}{\btheta} : \langle \bonu ,  \lambda_F -\bF \rangle
-\frac{\bv}{\btheta} \cdot \langle \bonu ,  \lambda_v -  \bv \rangle
\\
+ \frac{1}{\btheta}   \Big (  \big\langle \bonu , e(\lambda_F, \lambda_\theta) + \tfrac{1}{2}|\lambda_v |^2    - e (\bF, \btheta) - \tfrac{1}{2}|\bv|^2) \big\rangle  +\boldsymbol{\mu} \Big )
\end{aligned}
\end{equation}
The reader should note that the formula, which  now involves the concentration measure $\mu$,  is easily recast  
in the form
\begin{equation}\label{S4dfH2}
\begin{aligned}
\mathcal{H}(\bonu ,U,\bU) &=  \frac{1}{\btheta} \Big ( \big\langle \bonu , I ( \lambda_F , \lambda_v , \lambda_\theta | \bF , \bv , \btheta ) \big\rangle + \boldsymbol{\mu} \Big )
\\[3pt]
\mbox{where} \qquad I ( \lambda_U | \bU ) &= I ( \lambda_F , \lambda_v , \lambda_\theta | \bF , \bv , \btheta ) 
\\
&:= 
 \psi ( \lambda_F , \lambda_\theta | \bF , \btheta)  +  (\eta (\lambda_F, \lambda_\theta )  - \eta ( \bF, \btheta) )(\lambda_\theta-\btheta)  
+ \tfrac{1}{2} |\lambda_v-\bv|^2\rangle  \end{aligned}
\end{equation}
and  $\psi ( \lambda_F , \lambda_\theta | \bF , \btheta)$ is given by \eqref{defrelpsi}.

Using equations \eqref{defemv}, \eqref{defcdmv} for $(\lambda_F,\lambda_v,\lambda_\theta)$ and~\eqref{thermoad},~ \eqref{eq10} for $(\bF, \bv, \btheta)$, we obtain
$$
\begin{aligned}
-\btheta &\del_t  \big\langle \bonu, \eta (\lambda_F, \lambda_\theta )  - \eta ( \bF, \btheta) \big\rangle - \bSigma : \del_t \langle \bonu, \lambda_F - \bF \rangle 
- \bv \cdot \del_t  \langle \bonu, \lambda_v - \bv \rangle 
\\
&\quad + \del_t \Big ( \big\langle \bonu , e(\lambda_F, \lambda_\theta) + \tfrac{1}{2}|\lambda_v |^2    - e (\bF, \btheta) - \tfrac{1}{2}|\bv|^2) \big\rangle  + \boldsymbol{\mu} \Big )
\\[2pt]
&\le -\bSigma : \nabla \langle \bonu, \lambda_v - \bv \rangle  
- \bv \cdot \div  \big\langle \bonu , \Sigma (\lambda_F , \lambda_\theta ) - \Sigma (\bF, \btheta) \big\rangle 
\\
&\quad + \div \Big\langle \bonu ,  \lambda_v \cdot \Sigma (\lambda_F , \lambda_\theta ) -  \bv \cdot \Sigma (\bF, \btheta) \Big\rangle 
- \btheta \Big\langle \bonu , \frac{r}{\lambda_\theta} - \frac{\br}{\btheta} \Big\rangle  + \langle \bonu, r - \br \rangle \;.
\end{aligned}
$$
This inequality is understood in the sense of distributions, meaning that one multiplies by a test function $\varphi(x,t) > 0$ and integrates by parts;
this is possible since $(\bF, \bv, \btheta) \in W^{1, \infty}$ and since $\boldsymbol{\mu}$ is a measure, exploiting the fact that there is no multiplier in the term coming
from the energy. Next we integrate by parts, exploiting again the fact that $(\bF, \bv, \btheta)$ is a strong solution,  and take account of \eqref{S4dfH2}, 
to derive the  inequality 
\begin{align}
\del_t \Big [ \big\langle \bonu  &, I ( \lambda_U | \bU )  \big\rangle + \boldsymbol{\mu} \Big ]
-   \div \big\langle \bonu , (\lambda_v - \bv ) \cdot ( \Sigma (\lambda_F , \lambda_\theta ) -   \Sigma (\bF, \btheta) ) \big\rangle
\nonumber
\\[2pt]
&\le - (\del_t \btheta) \big\langle \bonu, \eta (\lambda_F, \lambda_\theta )  - \eta ( \bF, \btheta) \big\rangle 
- \del_t \bSigma : \langle \bonu, \lambda_F - \bF \rangle  -  \del_t \bv \cdot \nabla \langle \bonu, \lambda_v - \bv \rangle  
\nonumber
\\
&\quad + \langle \bonu, \lambda_v - \bv \rangle  \cdot \div \bSigma + \langle \bonu , \Sigma (\lambda_F , \lambda_\theta ) - \Sigma (\bF, \btheta) \rangle  : \nabla \bv
- \btheta \Big\langle \bonu , \frac{r}{\lambda_\theta} - \frac{\br}{\btheta} \Big\rangle + \langle \bonu, r - \br \rangle 
\nonumber
\\[2pt]
&\le - \btheta_t \, \Big\langle \bonu, \eta (\lambda_F, \lambda_\theta | \bF , \btheta ) \Big\rangle 
+ \bF_t : \Big\langle \bonu, \Sigma (\lambda_F, \lambda_\theta | \bF , \btheta ) \Big\rangle
 + \Big\langle \bonu,  \big ( \frac{r}{\lambda_\theta} - \frac{\br}{\btheta} \big ) (\lambda_\theta - \btheta )  \Big\rangle \, .
\label{avgrelen}
\end{align}
The last inequality follows from  \eqref{defrelpsi}, \eqref{maxwellrel},~ \eqref{eq10}. The derivation is analogous to the one  leading to the derivation of \eqref{relenthermov} and is omitted. The final identity \eqref{avgrelen} is the averaged version of the relative entropy inequality comparing 
 the entropic measure valued solution \eqref{defemv}, \eqref{defcdmv} to a Lipschitz solution $\bU$. 
 The reader should note how the concentration measure $\mu$ enters in both \eqref{defcdmv} and \eqref{avgrelen}.

Next, we establish the recovery of \emph{classical} solutions from \emph{entropic measure--valued} solutions as defined above in the framework of adiabatic thermoelasticity. The theorem should be contrasted to the theory for general hyperbolic systems in Section~\ref{S2.2.2}. 
Interestingly the example deviates from the workings of
the general theory with respect to the function of the concentration measure. For simplicity, from here and on, we set $r=\br=0$ and denote by $(U,\boldsymbol{\nu}, \boldsymbol{\mu})$ the entropic mv solution to the system of adiabatic thermoelasticity with associated Young measure $\boldsymbol{\nu}$ and concentration measure $\boldsymbol{\mu}\in\cM_+$ as constructed in Section~\ref{S4.3.1}.

The result is the following: 

\begin{theorem}
\label{thermoelweakstrong}
Suppose that \eqref{S4H}, the growth assumptions \eqref{littlea11}, \eqref{littlea2}, \eqref{littlea3}, \eqref{littlea4} and hypothesis \eqref{littlea5} hold true. Let
$(U,\boldsymbol{\nu}, \boldsymbol{\mu})$ be an entropic measure--valued solution to~\eqref{defemv}, \eqref{defcdmv} subject to the constitutive assumptions~\eqref{thermovcr}
with $r = 0$. Let $\bar{U}\in W^{1,\infty}(\overline{Q_T})$ be a strong solution to~\eqref{thermoad},~\eqref{eq10} with $\br=0$ such that 
$\bU\in\Gamma_{M,\delta}$, $\forall(x,t)\in Q_T$ for some $M>0$ and $\delta>0$.. Then, if the  initial data 
satisfy $\boldsymbol{\mu}_0=0$ and ${\boldsymbol{\nu}_0}_x=\delta_{\bU_0}(x)$, it holds $\boldsymbol{\nu}=\delta_{\bar{U}}$ and $U=\bar{U}$ almost everywhere on $Q_T$.
\end{theorem}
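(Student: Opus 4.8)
\emph{Strategy.} The argument mirrors the proof of Theorem~\ref{thmweakstrong}, but is now driven by the averaged relative entropy inequality~\eqref{avgrelen} in place of~\eqref{Hintineq}, and the decisive structural point is the placement of the concentration measure. Upon setting $r=\br=0$, inequality~\eqref{avgrelen} reads
\begin{equation*}
\del_t \Big [ \big\langle \boldsymbol{\nu} , I ( \lambda_U | \bU )  \big\rangle + \boldsymbol{\mu} \Big ] - \div \big\langle \boldsymbol{\nu} , (\lambda_v - \bv ) \cdot ( \Sigma (\lambda_F , \lambda_\theta ) - \Sigma (\bF, \btheta) ) \big\rangle \le - \btheta_t \, \big\langle \boldsymbol{\nu}, \eta (\lambda_F, \lambda_\theta | \bF , \btheta ) \big\rangle + \bF_t : \big\langle \boldsymbol{\nu}, \Sigma (\lambda_F, \lambda_\theta | \bF , \btheta ) \big\rangle \, .
\end{equation*}
Here $\boldsymbol{\mu}$ sits on the left inside the conserved quantity $\langle \boldsymbol{\nu}, I\rangle + \boldsymbol{\mu}$, while the right-hand side involves only the averaged relative entropy and relative stress. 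Since $\bU \in W^{1,\infty}(\overline{Q_T})$ takes values in $\Gamma_{M,\delta}$, the coefficients $\btheta_t$, $\bF_t$ are bounded and parts (ii)--(iii) of Lemma~\ref{LemmaI}, namely~\eqref{etaboundthermov} and~\eqref{sigmaboundthermov}, integrate against $\boldsymbol{\nu}$ to give $\big| \langle \boldsymbol{\nu}, \eta(\cdot|\bU)\rangle\big| + \big|\langle \boldsymbol{\nu},\Sigma(\cdot|\bU)\rangle\big| \le C \langle \boldsymbol{\nu}, I(\lambda_U|\bU)\rangle$.

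\emph{Integration and Gronwall.} I would test the displayed inequality against a smooth nonincreasing temporal cutoff $\xi_n \ge 0$ approximating the Lipschitz function~\eqref{S2.2.2xi} and integrate over $\To^d$, so the divergence flux drops by periodicity. Passing to the limits $n \to \infty$ and $\eps \to 0^+$ exactly as in the derivation of~\eqref{Hintineq2}, and using the data hypotheses $\boldsymbol{\mu}_0 = 0$ and ${\boldsymbol{\nu}_0}_x = \delta_{\bU_0}$ (so that the initial term $\int \langle {\boldsymbol{\nu}_0}, I(\lambda_U|\bU_0)\rangle\,dx + \boldsymbol{\mu}_0 = \int I(\bU_0|\bU_0)\,dx = 0$ vanishes), I obtain for a.e.\ $t \in (0,T)$
\begin{equation*}
\Psi(t) \doteq \int_{\To^d} \langle \boldsymbol{\nu}_{x,t}, I(\lambda_U|\bU)\rangle\,dx + \boldsymbol{\mu}\big(\To^d \times [0,t]\big) \le C \int_0^t \int_{\To^d} \langle \boldsymbol{\nu}_{x,\tau}, I(\lambda_U|\bU)\rangle\,dx\,d\tau \le C\int_0^t \Psi(\tau)\,d\tau \, ,
\end{equation*}
where the last step uses $\boldsymbol{\mu} \ge 0$ to bound the oscillation part by the full quantity $\Psi$. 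Gronwall's inequality then forces $\Psi(t) = 0$ for every $t \in (0,T)$.

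\emph{Conclusion.} Since $I(\lambda_U|\bU) \ge 0$ and $\boldsymbol{\mu} \ge 0$, the vanishing of $\Psi$ yields simultaneously $\boldsymbol{\mu} = 0$ and $\langle \boldsymbol{\nu}_{x,t}, I(\lambda_U|\bU)\rangle = 0$ almost everywhere. By the coercivity bounds~\eqref{renbo1} and~\eqref{normlike} of Lemma~\ref{LemmaI}, $I(\lambda_U|\bU)$ vanishes only at $\lambda_U = \bU$ and is strictly positive otherwise, so each probability measure $\boldsymbol{\nu}_{x,t}$ must concentrate at $\bU(x,t)$; that is $\boldsymbol{\nu} = \delta_{\bU}$, and consequently $U = \langle \boldsymbol{\nu}, \lambda\rangle = \bU$ almost everywhere on $Q_T$.

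\emph{Main obstacle.} The crux, and the departure from the general theory of Section~\ref{S2.2.2}, is the treatment of the concentration measure. Because $\boldsymbol{\mu}$ enters~\eqref{avgrelen} through the energy identity~\eqref{defemv}$_3$ on the left-hand side and is \emph{absent} from the right-hand side, the Gronwall estimate controls the combined measure $\langle \boldsymbol{\nu}, I\rangle + \boldsymbol{\mu}$ and rules out concentrations directly, rather than (as in the Clausius--Duhem setting of Theorem~\ref{thmweakstrong}) discarding the concentration term via its sign. Making the distributional manipulation of $\boldsymbol{\mu}$ rigorous, in particular the passage to the time slice $\boldsymbol{\mu}(\To^d\times[0,t])$ entering $\Psi$, is the delicate technical point; it relies on the growth and recession hypotheses~\eqref{littlea2}--\eqref{littlea5} established in Section~\ref{S4.3.1}, which guarantee that all the Young-measure actions appearing in~\eqref{defemv}, \eqref{defcdmv} and~\eqref{avgrelen} are well defined.
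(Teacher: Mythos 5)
Your proposal follows the paper's proof essentially step for step: test the averaged relative entropy inequality \eqref{avgrelen} (with $r=\br=0$) against smooth nonincreasing cutoffs $\xi_n$ approximating \eqref{S2.2.2xi}, bound the right-hand side by $C\,\langle\bonu, I(\lambda_U|\bU)\rangle$ using \eqref{etaboundthermov} and \eqref{sigmaboundthermov} of Lemma \ref{LemmaI} together with $\bU\in\Gamma_{M,\delta}$, note that the initial term vanishes since $\boldsymbol{\mu}_0=0$ and ${\boldsymbol{\nu}_0}_x=\delta_{\bU_0}$, apply Gronwall, and conclude via the coercivity \eqref{normlike}. That part is correct and is exactly the paper's argument.

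The one step that is not justified as written is the form of your Gronwall quantity $\Psi(t)$. Testing $\del_t\big[\langle\bonu, I\rangle + \boldsymbol{\mu}\big]$ against $\xi$ of \eqref{S2.2.2xi} produces the term $-\tfrac{1}{\eps}\boldsymbol{\mu}\big(\To^d\times[t,t+\eps]\big)$, whose limit as $\eps\to0^+$ is (for a.e.\ $t$) the density of the time marginal of $\boldsymbol{\mu}$ at $t$ -- not the cumulative mass $\boldsymbol{\mu}(\To^d\times[0,t])$ that you place inside $\Psi(t)$. The correct (and the paper's) move in both this theorem and Theorem \ref{thmweakstrong} is simply to \emph{discard} this nonnegative contribution, which yields
\begin{equation*}
\int_{\To^d}\big\langle\bonu_{x,t}, I(\lambda_U|\bU)\big\rangle\,dx \le C\int_0^t\int_{\To^d}\big\langle\bonu_{x,\tau}, I(\lambda_U|\bU)\big\rangle\,dx\,d\tau ,
\end{equation*}
and Gronwall plus \eqref{normlike} then give $\bonu=\delta_{\bU}$ and $U=\bU$ a.e., which is all the statement asserts. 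Your additional conclusion $\boldsymbol{\mu}=0$ therefore does not follow from the Gronwall step as you have set it up (it would need a separate argument, e.g.\ returning to \eqref{defemv}$_3$ after $U=\bU$ is established); relatedly, your closing remark that the concentration measure is ``ruled out directly rather than discarded via its sign'' reverses what actually happens -- in both settings the concentration term is dropped by positivity, and the genuine difference the paper emphasizes is only \emph{where} the concentration measure enters (the energy equation here, versus the entropy inequality in Section \ref{S2.2.2}).
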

\begin{proof}
We use the average quantity \eqref{S4dfH2} which satisfies the inequality~\eqref{avgrelen} in distributions.
Let $\{\xi_n\}$ be a sequence of monotone decreasing functions, with $\xi_n\ge 0$ $\forall n\in\mathbb{N}$, converging to the Lipschitz function $\xi$ given by~\eqref{S2.2.2xi} as $n\to\infty$. We multiply the inequality~\eqref{avgrelen} by $\varphi(x,\tau):= \xi_n(\tau)\in C^1_0([0,T])$, and get the integral relation
\begin{equation}
\begin{aligned}
\iint\frac{d\xi_n}{d\tau} \Big [ \big\langle &\bonu  , I ( \lambda_U | \bU )  \big\rangle\,dxd\tau+ \boldsymbol{\mu}(dxd\tau)\Big]\ge\\
\ge&-\iint\xi_n(\tau)
\left[- \btheta_t \, \Big\langle \bonu, \eta (\lambda_F, \lambda_\theta | \bF , \btheta ) \Big\rangle 
+ \bF_t : \Big\langle \bonu, \Sigma (\lambda_F, \lambda_\theta | \bF , \btheta ) \Big\rangle
\right]  ]dxd\tau\\
&-\int\xi_n(0) \Big [ \big\langle \bonu  , I ( \lambda_{U_0} | \bU_0 )  \big\rangle dx+ \boldsymbol{\mu}_0(dx)\Big]\;,
\end{aligned}
\end{equation}
for all $n\in\mathbb{N}$. Passing to the limit as $n\to\infty$ and then  $\varepsilon\to0+$, we arrive at
\begin{equation}
\int \big\langle \bonu  , I ( \lambda_{U(t)} | \bU(t) )  \big\rangle \,dx\le C\int_0^t\int\big\langle \bonu  , I ( \lambda_{U(\tau)} | \bU(\tau) )  \big\rangle \,dx\,d\tau+
\int\Big [ \big\langle \bonu  , I ( \lambda_{U_0} | \bU_0 )  \big\rangle dx+ \boldsymbol{\mu}_0(dx)\Big]\;.
\end{equation}
Indeed, the above estimate holds true because $\boldsymbol{\mu}\ge 0$ and
 $|\eta (\lambda_F, \lambda_\theta | \bF , \btheta )|\le C I(\lambda_U|\bU) $ and $|\Sigma (\lambda_F, \lambda_\theta | \bF , \btheta )| \le C  I(\lambda_U|\bU)$.
by Lemma~\ref{LemmaI} since $\bU\in\Gamma_{M,\delta}$, $\forall(x,t)\in Q_T$ and~\eqref{littlea11},~\eqref{littlea2} and~\eqref{littlea3}  are satisfied under the constitutive theory~\eqref{thermovcr}. For data with $\boldsymbol{\mu}_0=0$, applying Gronwall's inequality we conclude
\begin{equation}
\int \big\langle \bonu  , I ( \lambda_{U(t)} | \bU(t) )  \big\rangle \,dx\le e^{Ct}\int  \big\langle \bonu  , I ( \lambda_{U_0} | \bU_0 )  \big\rangle     \,dx\,.
\end{equation} 
The proof follows easily by~\eqref{normlike}.

\end{proof}
%
\appendix
\section{Useful Lemmas}\label{s-A}
Here, we adapt an idea from \cite{dst12} which leads to useful bounds for the relative entropy and the relative stress when
$\bu$ is restricted to take values in $B_M$ under the growth restrictions~\eqref{hypetap}, ~\eqref{hypfgrowth} and~\eqref{hypAgrowth} on the constitutive functions $\eta(u)$, $F_\alpha(u)$ and $A(u)$. These bounds have been used to establish the uniqueness results of Section~\ref{S2.2.2} and also to interpret the relative entropy as a ``distance formula".

Using  \eqref{formdefin}, \eqref{appform3}
the relative entropy is expressed in two forms :
\begin{equation}
\label{workrelen}
\begin{aligned}
\eta ( u | \bu ) &\doteq \eta (u) - \eta (\bu) - G(\bu) \cdot (A(u) - A(\bu))
\\
&= H(A(u))  -H(A(\bu)) - (\nabla_v H) (A(\bu))  \; (A(u) - A(\bu)) \, .
\\
&\doteq H \big (A(u) | A(\bu) \big ) \, .
\end{aligned}
\end{equation}
As $H(v)$ is uniformly convex on compact subsets of $\R^n$, it follows
$$
\eta(u | \bu) >  0   \quad \mbox{ for $u, \bu \in \R^n$, $u \ne \bu$}
$$
with  $\eta(u | \bu) = 0$ if and only if $A(u) = A(\bu)$ and, by \eqref{hypns}, if and only if $u =\bu$.

\begin{lemma} 
\label{lemuseful}
Let  \eqref{hypns}, \eqref{hypep}, \eqref{hyppd} and the growth assumptions  \eqref{hypetap}, \eqref{hypfgrowth} and \eqref{hypAgrowth} be satisfied, and let 
$\bu \in B_M$. There exists $R > M$ and constants $c_1$, $c_2 > 0$ depending on $M$ such that
\begin{equation}
\label{lemform1}
\eta ( u | \bu ) \ge 
\begin{cases}
c_1  \big | A(u) - A(\bu) \big |^2   &   \; |u| \le R,  |\bu | \le M  \\
c_2  \, \eta (u)                   &   \; |u| \ge R, |\bu | \le M
\end{cases}
\end{equation}
Moreover, there exists a constant $C_3$ depending on $M$ such that for each $\alpha=1, ..., d$
\begin{equation}
\label{lemform2}
| F_\alpha (u | \bu) | \le C_3  \eta ( u | \bu )   \quad \mbox{for $u \in \R^n$, $\bu \in B_M$} \, .
\end{equation}
\end{lemma}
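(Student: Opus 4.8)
The plan is to treat two regimes separately: an inner region $\{|u|\le R\}$, where all relevant quantities live in a fixed compact set and smoothness and convexity can be used directly, and an outer region $\{|u|\ge R\}$, where the growth hypotheses \eqref{hypetap}--\eqref{hypAgrowth} take over. Throughout I would work with the representation $\eta(u|\bu)=H(A(u)|A(\bu))$ recorded in \eqref{workrelen}, together with the fact that, by \eqref{hyppd} and \eqref{appform2}, $H$ is $C^2$ and uniformly convex on compact subsets of $\R^n$. The radius $R>M$ is to be fixed only at the end, large enough that every $o(1)$ estimate below is in force. The essential point is that since $\bu$ ranges over the compact ball $B_M$, the finitely many coefficients $\eta(\bu)$, $|A(\bu)|$, $|F_\alpha(\bu)|$, $|G(\bu)|$, $|\nabla F_\alpha(\bu)|$, $|\nabla A(\bu)^{-1}|$ are all bounded by a constant $C(M)$, so the limits in \eqref{hypfgrowth}, \eqref{hypAgrowth} hold \emph{uniformly} in $\bu\in B_M$.

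For \eqref{lemform1} in the inner region I would set $v=A(u)$, $\bar v=A(\bu)$ and apply Taylor's theorem with integral remainder,
$$
\eta(u|\bu)=H(v|\bar v)=\int_0^1(1-s)\,(v-\bar v)\cdot\nabla_v^2H\big(\bar v+s(v-\bar v)\big)(v-\bar v)\,ds\ge c_1|A(u)-A(\bu)|^2,
$$
with $c_1$ half the minimal eigenvalue of $\nabla_v^2H$ over the compact set $A(\overline{B_R})$. In the outer region I would factor out $\eta(u)$ and estimate
$$
\frac{\eta(u|\bu)}{\eta(u)}=1-\frac{\eta(\bu)}{\eta(u)}-\frac{G(\bu)\cdot(A(u)-A(\bu))}{\eta(u)}\ge 1-\frac{C(M)}{\eta(u)}-C(M)\frac{|A(u)|}{\eta(u)};
$$
by \eqref{hypetap} we have $\eta(u)\to\infty$, while by \eqref{hypAgrowth} the last ratio is $o(1)$, so enlarging $R$ so that the two subtracted terms fall below $\tfrac12$ gives $\eta(u|\bu)\ge c_2\,\eta(u)$ with $c_2=\tfrac12$.

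For \eqref{lemform2} I would again split. On the inner region the relative flux $F_\alpha(u|\bu)$ of \eqref{relflux} vanishes together with its first $u$-derivative at $u=\bu$ (the latter because $\nabla F_\alpha(\bu)-\nabla F_\alpha(\bu)\nabla A(\bu)^{-1}\nabla A(\bu)=0$), hence it is $O(|u-\bu|^2)$ uniformly on the compact set; using that $A$ is a diffeomorphism there, so $|u-\bu|\le C|A(u)-A(\bu)|$, and then the inner bound just proved, I obtain $|F_\alpha(u|\bu)|\le C|A(u)-A(\bu)|^2\le (C/c_1)\,\eta(u|\bu)$. On the outer region I would bound crudely by the triangle inequality,
$$
|F_\alpha(u|\bu)|\le |F_\alpha(u)|+C(M)+C(M)\big(|A(u)|+C(M)\big),
$$
divide by $\eta(u)$, and invoke \eqref{hypfgrowth} and \eqref{hypAgrowth} to get $|F_\alpha(u|\bu)|\le C\,\eta(u)$ for $R$ large; combining with the outer estimate $\eta(u)\le\eta(u|\bu)/c_2$ of \eqref{lemform1} yields $|F_\alpha(u|\bu)|\le C_3\,\eta(u|\bu)$.

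The only genuine obstacle is bookkeeping of the uniformity in $\bu$: one must verify that each $o(1)$ above is uniform over the compact set $B_M$, which rests on the boundedness of the $\bu$-dependent coefficients listed in the first paragraph, and that a single radius $R$ can be chosen making all inner and outer estimates hold simultaneously, achieved by taking the maximum of the finitely many thresholds. The remainder is routine Taylor expansion together with the convexity supplied by \eqref{hyppd}.
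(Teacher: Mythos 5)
Your proposal is correct and follows essentially the same route as the paper: the same split at a large radius $R$ chosen via the growth hypotheses, the same use of uniform convexity of $H$ on the compact image $A(\overline{B_R})$ for the inner bound, the same division by $\eta(u)$ for the outer bound, and the same quadratic Taylor estimate (the paper splits $F_\alpha(u|\bu)$ into two explicitly quadratic pieces where you invoke vanishing of the value and first derivative at $u=\bu$, but these are the same computation). The uniformity bookkeeping you flag is handled in the paper exactly as you describe, by bounding the $\bu$-dependent coefficients over $B_M$.
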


\begin{proof}
Observe that \eqref{workrelen} gives
$$
\begin{aligned}
\eta ( u | \bu ) &= \eta (u) - \eta (\bu) - G(\bu) \cdot (A(u) - A(\bu))
\\
&\ge \eta (u) - C_1 - C_2 |A(u) |  \qquad \qquad \qquad   \mbox{for $\bu \in B_M$} \, .
\end{aligned}
$$
Using successively \eqref{hypAgrowth} and \eqref{hypetap} we select $R > M$ sufficiently large so that
\begin{equation}
\label{selectR}
\left \{
\begin{aligned}
|A(u)|  &\le \frac{1}{2C_2} \eta (u) 
\\
4C_1  & \le \eta (u) 
\end{aligned}
\right .     \qquad \qquad |u| \ge R
\end{equation}
Then
$$
\eta ( u | \bu ) \ge \frac{1}{4} \eta (u) \qquad \qquad   \mbox{for $|u| \ge R$,  $\bu \in B_M$}.
$$

On the complementary interval $|u| \le R$,  $\bu \in B_M$, we employ \eqref{workrelen}$_2$.
Hypothesis \eqref{hyppd} states that $H(v)$ is uniformly convex on compact subsets of $\R^n$, hence
\begin{equation}
\label{equivnorm1}
 c |A(u) - A(\bu) |^2 \le \eta ( u | \bu ) \le C | A(u) - A(\bu) |^2    \qquad \qquad   \mbox{for $|u| \le R$},  
\end{equation}
since $\bu \in B_M \subset B_R$, where 
$$\displaystyle{  c = \inf_{ |x| \le R}  (\nabla^2_v H ) (A(x)) } > 0 \, ,
\qquad 
\displaystyle{  C = \sup_{ |x| \le R}  (\nabla^2_v H ) (A(x)) } \, ,
$$
and \eqref{lemform1} follows. 
Let us also note that on account of \eqref{hypns} there are constants $\bar c$ and $\bar C$ such that
\begin{equation}
\label{equivnorm2}
 \bar c |u - \bu |^2 \le \eta ( u | \bu ) \le \bar C | u - \bu |^2    \qquad \qquad   \mbox{for $|u| \le R$,  $\bu \in B_M \subset B_R$} \, .
\end{equation}
Indeed this follows easily using the mean value theorem, 
$$
A(u) - A(\bu) = \Big ( \int_0^1 \nabla A (t u + (1-t)\bu ) \, dt \Big ) (u - \bu) =  \nabla A ( t^* u + (1-t^*) \bu)  (u - \bu)
$$
for $u, \bu \in B_R$ and the invertibility of $\nabla A(u)$.

Coming next to the proof of \eqref{lemform2}, observe that \eqref{relflux} is estimated for $\bu \in B_M$ as follows:
$$
\begin{aligned}
| F_\alpha (u | \bu)  | &=  | F_\alpha (u) - F_\alpha (\bu) - \nabla F_\alpha (\bu)  \nabla A (\bu)^{-1} (A(u) - A(\bu)) |
\\
&\le  | F_\alpha (u) | + K_1 |A(u)| + K_2
\end{aligned}
$$
In view of \eqref{hypfgrowth} and for $R$ as selected in \eqref{selectR} we have
$$
| F_\alpha (u | \bu)  | \le K_3 \eta (u)  \qquad \qquad   \mbox{for $|u| \ge R$,  $\bu \in B_M$} .
$$
On the complementary region
$$
\begin{aligned}
| F_\alpha (u | \bu)  | 
&\le | F_\alpha (u) - F_\alpha (\bu) - \nabla F_\alpha (\bu)  (u - \bu)) |
\\
&\qquad +  \left |  \nabla F_\alpha (\bu)  \nabla A (\bu)^{-1} \big (A(u) - A(\bu) - \nabla A (\bu) (u - \bu)  \big ) \right |
\\
&\le  K_4 |A(u) - A(\bu)|^2 \qquad \qquad   \mbox{for $|u| \le R$,  $\bu \in B_M$} \, ,
\end{aligned}
$$
and we conclude via \eqref{lemform1} that \eqref{lemform2} holds.
\end{proof}

There is also a variant of Lemma \ref{lemuseful} that indicates the relation of $\eta (u| \bu)$ to a norm for $\bu \in B_M$.

\begin{lemma}
\label{lemvariant}
Under the hypotheses of Lemma \ref{lemuseful}, one may select $R > M + 1 $  and constants 
$\bar c_1$, $\bar c_2$ depending on $M$ so that 
\begin{equation}
\label{lemform3}
\eta ( u | \bu ) \ge 
\begin{cases}
c_1  \big | u - \bu \big |^2        &   \; |u| \le R,  |\bu | \le M  \\
c_2  \,   \big | u - \bu \big |^p    &   \; |u| \ge R, |\bu | \le M\,.
\end{cases}
\end{equation}
\end{lemma}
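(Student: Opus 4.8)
The plan is to reuse the two regimes already isolated in the proof of Lemma~\ref{lemuseful} and merely to upgrade the large-$u$ estimate from a bound involving $\eta(u)$ to a bound involving the power $|u-\bu|^p$. First I would fix $\bu\in B_M$ and dispose of the region $|u|\le R$: here the required inequality is already contained in the proof of Lemma~\ref{lemuseful}, since \eqref{equivnorm2} gives $\bar c\,|u-\bu|^2\le \eta(u|\bu)$ for $|u|\le R$ and $\bu\in B_M\subset B_R$, which is exactly the first branch of \eqref{lemform3}.

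The substance lies in the region $|u|\ge R$. There Lemma~\ref{lemuseful} already yields $\eta(u|\bu)\ge \tfrac14\eta(u)$ for $|u|\ge R$, $\bu\in B_M$. I would combine this with the lower growth bound in \eqref{hypetap}, namely $\eta(u)\ge \beta_1(|u|^p+1)-B=\beta_1|u|^p+(\beta_1-B)$, and enlarge $R$ if necessary so that $\beta_1|u|^p+(\beta_1-B)\ge \tfrac{\beta_1}{2}|u|^p$ holds for $|u|\ge R$; this requires only $|u|^p\ge 2(B-\beta_1)/\beta_1$, which is automatic once $R$ is large. Consequently $\eta(u|\bu)\ge \tfrac{\beta_1}{8}|u|^p$ throughout this region.

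It then remains to replace $|u|^p$ by $|u-\bu|^p$. Since $\bu\in B_M$ and $R>M+1>M$, the triangle inequality gives $|u-\bu|\le |u|+M<2|u|$ whenever $|u|\ge R$, whence $|u|^p>2^{-p}|u-\bu|^p$. Substituting produces the second branch of \eqref{lemform3} with $c_2=\tfrac{\beta_1}{8}\,2^{-p}$. Finally I would fix a single $R$ exceeding the threshold from Lemma~\ref{lemuseful}, the growth threshold above, and $M+1$, so that all the estimates are valid simultaneously.

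There is no genuine analytical obstacle here: the lemma is essentially a bookkeeping variant of Lemma~\ref{lemuseful}, trading the coercivity of $H$ in the conserved variables for the growth information \eqref{hypetap}. The only point demanding care is the coordinated choice of the radius $R$, which must at once (i) validate the uniform-convexity estimate \eqref{equivnorm2} on $\{|u|\le R\}$, (ii) render the additive defect $(\beta_1-B)$ in \eqref{hypetap} negligible against $\beta_1|u|^p$, and (iii) satisfy $R>M+1$ so that $|u-\bu|<2|u|$; taking $R$ to be the maximum of the three thresholds suffices.
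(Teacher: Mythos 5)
Your proposal is correct and follows essentially the same route as the paper: quote the bound $\eta(u|\bu)\ge\tfrac14\eta(u)$ from Lemma \ref{lemuseful} for $|u|\ge R$, invoke \eqref{hypetap} to get $\eta(u)\ge\tfrac{\beta_1}{2}|u|^p$ after enlarging $R$, convert $|u|^p$ into $|u-\bu|^p$ via the triangle inequality, and handle $|u|\le R$ through the uniform convexity of $H$ together with the invertibility of $\nabla A$. The only cosmetic difference is that you use $|u-\bu|\le 2|u|$ (giving the constant $2^{-p}$) where the paper chooses $R$ large enough that $(1+M/|u|)^p\le 2$; both are valid.
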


\begin{proof}
The proof proceeds as in Lemma \ref{lemuseful} up to the point of selecting $R > M$ using formula \eqref{selectR}
such that
$$
\eta ( u | \bu ) \ge \frac{1}{4} \eta (u) \qquad \qquad   \mbox{for $|u| \ge R$,  $\bu \in B_M$}.
$$
By Hypothesis \eqref{hypetap}, we may select $R$ so that
$$
\eta(u) \ge \frac{\beta_1}{2} |u|^p  \qquad   \mbox{ for $|u| \ge  R$}.
$$
In addition, we select $R$ even larger (if necessary) so that for $\bu \in B_M$ and any $|u| \ge R$
$$
\frac{ |u - \bu|^p}{|u|^p} \le \Big ( 1 + \frac{M}{|u|} \Big )^p \le 2 \, .
$$
Combining we conclude
$$
\begin{aligned}
\eta ( u | \bu ) &\ge \frac{1}{4} \eta (u) \ge \frac{\beta_1}{8} |u|^p 
\ge \frac{\beta_1}{16} |u - \bu |^p \qquad  |u| \ge  R , \; \bu \in B_M
\end{aligned}
$$

On the complementary interval $|u| \le R$,  $\bu \in B_M$,  we have as in Lemma \ref{lemuseful}
\begin{equation*}
  \eta ( u | \bu ) \ge c | A(u) - A(\bu) |^2    \qquad \qquad   \mbox{for $|u| \le R$,  $\bu \in B_M $},
\end{equation*}
where $\displaystyle{  c = \inf_{ |x| \le R}  (\nabla^2_v H ) (A(x)) } > 0 $.
Using  the mean value theorem we have for $u, \bu \in B_R$
$$
A(u) - A(\bu) = \Big ( \int_0^1 \nabla A (t u + (1-t)\bu ) \, dt \Big ) (u - \bu) =  \nabla A ( t^* u + (1-t^*) \bu)  (u - \bu)
$$
and by \eqref{hypns}, 
$$
 |u - \bu| = \big | \nabla A ( t^* u + (1-t^*) \bu)^{-1}  ( A(u) - A(\bu) ) \big | \le C^* | A(u) - A(\bu) |
$$
what completes the proof of \eqref{lemform3}.
\end{proof}

%

\bigskip
\noindent
{\bf Acknowledgement.} The authors would like to thank the anonymous referee and also Denis Serre for their valuable comments and suggestions that lead to improve the manuscript.\\
Research partially supported by the European Commission ITN project "Modeling and computation of shocks and interfaces".
AET acknowledges the support of the King Abdullah University of Science and Technology (KAUST).\\
\emph{Conflict of interest:} The authors have no conflicts of interest to declare.


\begin{thebibliography}{10}

\bibitem{ab97}
{\sc J.J. Alibert and G. Bouchitt\'{e}},
Non-uniform integrability and generalized Yound measures,
{\em J. Convex Analysis}
{\bf 4}  (1997), 129-147.

\bibitem{ball88}
{\sc J.M. Ball},
A version of the fundamental theorem for Young measures,
In {\em PDEs and Continuum Models of Phase Transitions},
{\sc M. Rascle, D. Serre, M. Slemrod}, eds.,
Lecture Notes in Physics, Vol. 344, Springer, New York, 1988,
pp. 207-215.


\bibitem{bds11}
{\sc Y. Brenier, C. De Lellis and L. Sz\`ekelyhidi Jr.,},
Weak-strong uniqueness for measure-valued solutions,
{\em Comm. Math. Physics}
{\bf 305}  (2011), 351--361.

\bibitem{KV15}
{\sc K.~Choi and A~Vasseur},
Short-time stability of scalar viscous shocks in the inviscid limit by the relative entropy method. 
{\em SIAM J. Math. Anal.} {\bf 47} (2015), 1405-1418. 
 
\bibitem{cn63}
{\sc B.D. Coleman and W. Noll},
The thermodynamics of elastic materials with heat conduction and viscosity.
{\em Arch. Rational Mech. Anal.} {\bf 13} (1963), 167 --178.

\bibitem{cm64}
{\sc B.D. Coleman and  V.J. Mizel},
Existence of caloric equations of state in thermodynamics.
{\em  J. Chem. Physics} {\bf 40} (1964), 1116 -- 1125.

\bibitem{dafermos79}
{\sc C.~M. Dafermos},
The second law of thermodynamics and stability.
{\em Arch. Rational Mech. Anal.} {\bf 70} (1979), 167--179.


\bibitem{dafermos79b}
{\sc C.~M. Dafermos},
Stability of motions of thermoelastic fluids.
 {\em J. Thermal Stresses} {\bf 2} (1979), 127--134.

\bibitem{daf10}
{\sc C.M. Dafermos},
{\em Hyperbolic Conservation Laws in Continuum Physics},
Third Edition.
Grundlehren der Mathematischen Wissenschaften, 325.
Springer Verlag, Berlin, 2010.

\bibitem{dst12}
{\sc S. Demoulini, D.M.A. Stuart, A.E. Tzavaras}, 
Weak-strong uniqueness of dissipative measure-valued solutions 
for polyconvex elastodynamics,
{\em Arch. Rational Mech. Analysis} 
{\bf 205} (2012), 927-961.

\bibitem{diperna79}
{\sc R.~J. DiPerna},
Uniqueness of solutions to hyperbolic conservation laws,
{\em Indiana Univ. Mah. J.} {\bf 28} (1979), 137--187.

\bibitem{dm87}
{\sc R.~J. DiPerna and A.~J. Majda},
Oscillations and concentrations in weak solutions of the incompressible Euler equations,
{\em Commun. Math. Phys.} {\bf 108} (1987), 667-689.

\bibitem{fn12}
{\sc E. Feireisl and A. Novotny}
Weak-strong uniqueness property for the full Navier-Stokes-Fourier system,
{\em Arch. Rational Mech. Anal. } {\bf 204} (2012), 683-706.

\bibitem{feir15}
{\sc E.  Feireisl}
Asymptotic analysis of compressible, viscous and heat conducting fluids. 
Nonlinear dynamics in partial differential equations, pp. 1–33,  
Adv. Stud. Pure Math., 64, Math. Soc. Japan, Tokyo, 2015.

\bibitem{fnkt}
{\sc 
U. S. Fjordholm, R. K\"{a}ppeli, S. Mishra and E. Tadmor}
Construction of approximate entropy measure-valued solutions for hyperbolic 
systems of conservation laws. Preprint (2014).


\bibitem{FL71}
{\sc K. O. Friedrichs and  P. D. Lax}, 
Systems of conservation equations with a convex extension, 
{\em Proc. Nat. Acad. Sci. USA} {\bf 68} (1971), 1686-1688.

\bibitem{iesan94}
{\sc D. Iesan}
 On the stability of motions of thermoelastic fluids,
{\em J. Thermal Stresses} {\bf 17} (1994), pp.~409-418.

\bibitem{Kawashima84}
{\sc S.~Kawashima},
 Systems of a hyperbolic-parabolic composite type, with applications to the equations of magnetohydrodynamics, 
 Doctoral thesis, Kyoto University, 1984.

\bibitem{Kawashima87}
{\sc S.~Kawashima},
 Large-time behaviour of solutions to hyperbolic-parabolic systems of conservation laws and applications,
{\em  Proc. Roy. Soc. Edinburgh Sect. A } {\bf 106} (1987), 169-194.

\bibitem{lt06}
{\sc C. Lattanzio and A.E. Tzavaras},
Structural properties of stress relaxation and convergence from viscoelasticity 
to polyconvex elastodynamics.
{\em Arch. Rational Mech. Anal. } {\bf 180} (2006), 449-492.

\bibitem{lt13}
{\sc C. Lattanzio and A.E. Tzavaras},
Relative entropy in diffusive relaxation
{\em SIAM J. Math. Anal.} {\bf 45} (2013), 1563-1584.

\bibitem{MT14}
{\sc A. Miroshnikov and K.Trivisa},
Relative entropy in hyperbolic relaxation for balance laws.
{\em Commun. Math. Sci.} {\bf 12} (2014), 1017--1043 .

\bibitem{S-PhyD}
{\sc D. Serre}, The structure of dissipative viscous systems of conservation laws,\emph{Phys. D}  {\bf 239} (2010) no. 15, 1381--1386.

\bibitem{S-Cont}
{\sc D. Serre}, Local existence for viscous system of conservation laws: $H^s$ data with $s>1+d/2$. Nonlinear partial differential equations and hyperbolic wave phenomena, 339--358, \emph{Contemp. Math.} {\bf 526}, \emph{Amer. Math. Soc}, Providence, RI, 2010.

\bibitem{S-IMA}
{\sc D. Serre}, Viscous system of conservation  laws: singular limits. Nonlinear conservation laws and applications, 433-445, \emph{IMA Vol. MAth. Appl.}, {\bf153}, Springer, New York, 2011.


\bibitem{SV14}
{\sc D. Serre and A. Vasseur},
L2-type contraction for systems of conservation laws. 
{\em J.  \'Ecole Polytech. Math.} {\bf 1} (2014), 1-28.

\bibitem{tartar79}
{\sc L. Tartar},
Compensated compactness and applications to partial differential 
equations. In {\em Nonlinear Analysis and Mechanics}, R.J. Knops, ed.,  
Heriot--Watt Symposium, Vol. IV, Pitman Research Notes in Mathematics, 
Pitman, Boston, 1979, pp.136--192.

\bibitem{TN}
{\sc C. Truesdell and W. Noll},
{\it  The non-linear field theories of mechanics}. Handbuch der Physik, Bd. III/3, pp. 1--602,
Springer Verlag, Berlin 1965.


\bibitem{tzavaras05}
{\sc A.~E. Tzavaras},
Relative entropy in hyperbolic relaxation.
{\em Commun. Math. Sci.} {\bf 3} (2005), 119--132.


\bibitem{LZ97}
 {\sc Tai-Ping Liu and Yanni Zeng},
 Large time behavior of solutions for general quasilinear hyperbolic-parabolic systems of conservation laws. 
 {\em Mem. Amer. Math. Soc.}  {\bf 125} (1997), pp 1-120.

\end{thebibliography}
\end{document}